\def\@@and{\MakeLowercase{and}}
\newtheorem{thm}{Theorem}[section]
\newtheorem{cor}[thm]{Corollary}
\newtheorem{lem}[thm]{Lemma}
\newtheorem{prop}[thm]{Proposition}
\theoremstyle{definition}
\newtheorem{rem}[thm]{Remark}
\numberwithin{equation}{section}
\newcommand{\eps}{\varepsilon}
\newcommand{\bbn}{\mathbb{N}} 
\newcommand{\ip}{\mathrm{IP}}
\newcommand{\RP}{{\bf RP}} 
\DeclareMathOperator{\eq}{Eq}
\DeclareMathOperator{\diam}{diam}
\title[C\MakeLowercase{haracterizations of distality via weak equicontinuity}] 
{C\MakeLowercase{haracterizations of distality via weak equicontinuity}}
\author[J. L\MakeLowercase{i}]{J\MakeLowercase{ian} Li}
\address[J. Li]{Department of Mathematics,
	Shantou University, Shantou, 515063, Guangdong, China}
\email{lijian09@mail.ustc.edu.cn}
\urladdr{https://orcid.org/0000-0002-8724-3050}
\author[Y. Y\MakeLowercase{ang} ]{Y\MakeLowercase{ini}  Yang}
\address[Y. Yang]{School of Mathematical Sciences, Xiamen University, Xiamen, 361005, Fujian, China}
\email{ynyangchs@foxmail.com}
\urladdr{https://orcid.org/0000-0001-6564-2213}
	\subjclass[2020]{Primary: 37B05; Secondary: 37B20, 37B25}
 \keywords{Distality, equicontinuity, sensitivity, IP-set, central-set, FIP-set, system of order $\infty$}
\thanks{J. Li was supported in part by NFS of China (Grant Nos. 12222110 and 12171298). Y. Yang is the corresponding author.}
\begin{document}
	
\begin{abstract}
For an infinite discrete group $G$ acting on a compact metric space $X$,
we introduce several weak versions of equicontinuity along subsets of $G$ and 
show that if a minimal system $(X,G)$ admits an invariant measure then  $(X,G)$ is distal if and only if it is pairwise IP$^*$-equicontinuous; if the product system $(X\times X,G)$ of a minimal system $(X,G)$ has a dense set of minimal points, then  $(X,G)$ is distal if and only if it is pairwise IP$^*$-equicontinuous if and only if it is pairwise central$^*$-equicontinuous; if $(X,G)$ is a minimal system with $G$ being abelian, then $(X,G)$ is a system of order $\infty$ if and only if
it is pairwise FIP$^*$-equicontinuous.
\end{abstract}

\maketitle

\section{Introduction}

By a \textit{topological dynamical system} or a \textit{$G$-action} we mean a pair $(X,G)$, where $X$ is a compact metric space
with a compatible metric $d$ and $G$ is an infinite discrete group acting continuously on $X$. When $G=\mathbb{Z}$ the action can be generated
by a homeomorphism $T$ from $X$ to itself, in this case we will simply write the system as $(X,T)$.

A topological dynamical system $(X,G)$ is called \emph{equicontinuous} if for any $\eps>0$ there is a $\delta>0$
such that for any $x,y\in X$ with $d(x,y)<\delta$ one has $d(gx,gy)<\eps$ for all $g\in G$, and \emph{distal} if for any two distinct points $x,y\in X$ one has $\inf_{g\in G}d(gx,\allowbreak gy)>0$.
It is clear that every equicontinuous system is distal.
We say that a system $(X,G)$ is \emph{minimal} if no proper closed subset of $X$ is invariant under the action of $G$.
It used to be an open question whether every minimal distal system is equicontinuous (see e.g. \cite[Question (4) in page 349]{G58}).
In \cite{AHM61}, Auslander, Hahn and Markus answered this question negatively by exhibiting the existence of analytic flows on compact nilmanifolds which are distal, minimal but not equicontinuous. 
In \cite{F61} Furstenberg studied conditions of a class of homeomorphisms on the torus to be strictly ergodic.
In fact, many homeomorphisms in this class are distal, minimal but not equicontinuous. 
A simple example is as follows: $T\colon \mathbb{T}^2\to \mathbb{T}^2$, $(x,y)\mapsto (x+\alpha,x+y)$, 
where $\mathbb{T}=\mathbb{R}/\mathbb{Z}$ and $\alpha$ is irrational.
Motivated by this two classes of distal not-equicontinuous minimal system,  Furstenberg obtained the structure of  minimal distal systems in \cite[Theorems~2 and~3]{F63}: 
every minimal system $(X,G)$ is distal if and only if  a succession (possibly transfinite) of isometric extensions starting with the one-point system.

In \cite{C63} Clay introduced two variations of equicontinuity.  
It is shown in \cite[Theorem~2]{C63} that a minimal system is syndetic equicontinuous if and only it is locally almost periodic, 
and \cite[Theorem~8]{C63} that a minimal system is weak syndetic equicontinuous if and only if it is proximal equicontinuous, 
that is, it is an proximal extension to its maximal equicontinuous factor.
It is a natural question that how to characterize distality by a weak form of equicontinuity.
In \cite{F81}, Furstenberg characterized distal points in terms of the set of return times for $\mathbb{N}$-actions, 
that is, a point is distal if and only if it is IP$^*$-recurrent if and only if  it is central$^*$-recurrent  (see \cite[Theorem~9.11 and Proposition~9.17]{F81}).
Inspired by the above results, in \cite{LY21} the authors of this paper proposed a weak form of equicontinuity to characterize distality of $\mathbb{N}$-actions. 
To be precise, a $\mathbb{N}$-system $(X,T)$  is called pairwise IP$^*$-equicontinuous if  for any $\eps>0$ there is a $\delta>0$
such that for any $x,y\in X$ with $d(x,y)<\delta$ one has $d(T^nx,T^ny)<\eps$ for all $n$ in an IP$^*$-subset of $\mathbb{N}$.
It is show in  \cite[Corollary~2]{LY21} that a minimal $\mathbb{N}$-system $(X,T)$ is
distal if and only if it is pairwise IP$^*$-equicontinuous.
One of the motivations of this article is to generalize this result to the general group actions and study related property of pairwise IP$^*$- equicontinuity. 
It should be noticed that the proof of the dichotomy result for pairwise IP$^*$-equicontinuous in \cite[Corollary~2]{LY21} 
relied on the structure theorem  developed by Ye and Yu in \cite{YY18}, and in this paper we get an  independent proof of this dichotomy result. 
On the other hand,  by using the structure theorem of minimal systems we further characterize minimal distal systems by pairwise central$^*$-equicontinuity.
The first main result of this paper is as follows.

\begin{thm}\label{thm:main-resut1}
    Let $(X,G)$ be a minimal system.  
	\begin{enumerate}
		\item If $(X,G)$ admits an invariant measure, then  $(X,G)$ is distal if and only if it is pairwise IP$^*$-equicontinuous.
        \item If  $(X\times X,G)$ has a dense set of minimal points, then $(X,G)$ is distal if and only if it is pairwise IP$^*$-equicontinuous if and only if it is pairwise central$^*$-equicontinuous.
	\end{enumerate} 
\end{thm}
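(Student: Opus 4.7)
The plan is to treat the implication distal $\Rightarrow$ pairwise IP$^*$-equicontinuous (which at once yields pairwise central$^*$-equicontinuous) separately from the two converses. For the forward direction I would invoke Furstenberg's characterization of distal points as IP$^*$-recurrent points, applied to the product system $(X\times X,G)$. Since $(X,G)$ distal implies $(X\times X,G)$ distal, every pair $(x,y)$ is IP$^*$-recurrent in the product. Given $\eps>0$, set $\delta=\eps/3$; for any $x,y$ with $d(x,y)<\delta$ the return set $F=\{g\in G:d(gx,x)<\delta,\ d(gy,y)<\delta\}$ is IP$^*$, and the triangle inequality gives $d(gx,gy)<3\delta=\eps$ for every $g\in F$. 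Because every IP$^*$ set is central$^*$, pairwise central$^*$-equicontinuity follows immediately.

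For the converse in part (1), assume $(X,G)$ is minimal with invariant measure $\mu$ and pairwise IP$^*$-equicontinuous; argue by contradiction. If $(X,G)$ were not distal there would be a proximal pair $(x,y)$ with $\alpha=d(x,y)>0$. Pairwise IP$^*$-equicontinuity applied at a proximal shift $(hx,hy)$, with $d(hx,hy)$ smaller than the $\delta$ corresponding to $\eps=\alpha/4$, produces an IP$^*$ set $F\subseteq G$ with $d(g\cdot hx,g\cdot hy)<\alpha/4$ for all $g\in F$, i.e.\ $Fh\subseteq\{g':d(g'x,g'y)<\alpha/4\}$. On the other hand, the invariant measure $\mu$ permits a Furstenberg--Hindman-type construction: Poincar\'e recurrence in $(X\times X,G)$ provides thick sets of return times of $(x,y)$ to a small neighborhood of itself, and Hindman's theorem refines these into an IP set $I$. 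The neighborhood can be taken so small that $I\subseteq\{g':d(g'x,g'y)>\alpha/2\}$ (the pair-distance stays close to $\alpha$ there). Because right translation by $h$ preserves IP$^*$-ness, $Fh$ is itself IP$^*$ and must meet the IP set $I$, which is impossible.

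For the converse in part (2), the hypothesis that $(X\times X,G)$ has a dense set of minimal points replaces the role of the invariant measure, and one works with the weaker pairwise central$^*$-equicontinuity. The same outline goes through once a central set of return times for a pair near $(x,y)$ staying at distance above $\alpha/2$ is produced. I would use density of minimal points to select a minimal point $(x',y')$ very close to $(x,y)$; every point of a minimal system is central-recurrent (via minimal idempotents in $\beta G$), so the return set of $(x',y')$ to a sufficiently small neighborhood of itself is central and contained in $\{g':d(g'x',g'y')>\alpha/2\}$. Choosing a proximal shift $(hx',hy')$ and applying pairwise central$^*$-equicontinuity then yields a central$^*$ set contained in $\{g':d(g'x',g'y')<\alpha/4\}$, contradicting the defining property that central$^*$ sets meet every central set.

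\textbf{Main obstacle.} The principal technical hurdle is the assertion that right translates of IP$^*$ and central$^*$ sets retain those properties for a general discrete group $G$; this is straightforward in the abelian case but delicate in general, and a careful proof likely requires passing to the Stone--\v{C}ech compactification $\beta G$ and exploiting the algebraic structure of idempotent and minimal idempotent ultrafilters. A related subtlety is confirming that Furstenberg's IP$^*$-recurrence characterization of distal points transfers cleanly from $\mathbb{Z}$-actions to arbitrary discrete $G$-actions; this should follow from the Ellis semigroup formulation of distality together with the Galvin--Glazer--Hindman theorem, but must be set up with care. The remaining ingredients---building an IP set of return times from an invariant measure, and a central set from a minimal point---are standard Furstenberg-type arguments.
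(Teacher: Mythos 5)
Your forward direction (distal $\Rightarrow$ pairwise IP$^*$-equicontinuous $\Rightarrow$ pairwise central$^*$-equicontinuous) is correct and is essentially the paper's Lemma~3.1. The two converses, however, each contain a genuine gap, and the obstacle you flag at the end is not merely delicate --- it is false. Right translation does \emph{not} preserve IP$^*$-ness, even for $G=\mathbb{Z}$: the even integers form an IP$^*$-set (any $FP(\{p_1,p_2,\dots\})$ contains $p_1$, $p_2$ and $p_1+p_2$, one of which is even), yet their translate, the odd integers, is disjoint from the IP-set $FP(\{2,4,8,\dots\})$ and so is not IP$^*$. Hence the step ``$Fh$ is itself IP$^*$ and must meet $I$'' collapses, and no amount of work in $\beta G$ will rescue it. A second, independent problem is the source of your IP-set $I$: Poincar\'e recurrence for an invariant measure gives largeness of $N(A,A)$ for positive-measure sets $A$, not return times of the specific pair $(x,y)$; refining to a point (as in the paper's Lemma~3.4) produces an IP-set of return times of \emph{some other} nearby pair $(x'',y'')$, so your ``close'' set (about $(x,y)$, via the shift $h$) and your ``far'' set (about $(x'',y'')$) concern different pairs and cannot be intersected to yield a contradiction. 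The paper avoids both traps by arguing at a single point: if $x$ is not distal it is not IP$^*$-recurrent, so $N(x,X\setminus B(x,2\delta))$ is an IP-set $A$; IP$^*$-centrality (from the invariant measure) produces $v\in U\cap B(x,\delta)$ returning to $U\cap B(x,\delta)$ along an IP-subset of $A$, whence $d(gx,gv)>\delta$ on an IP-set, contradicting the equicontinuity condition at $x$ applied to the pair $(x,v)$ --- no translation of IP$^*$-sets is ever needed.

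Part (2) has the same defects plus one more: a minimal point $(x',y')$ of $(X\times X,G)$ lying off the diagonal is a distal pair (an almost periodic proximal pair is diagonal), so there is no ``proximal shift'' $(hx',hy')$ landing near the diagonal, and pairwise central$^*$-equicontinuity cannot be applied to it; if instead you shift the original proximal pair $(x,y)$, you are back to comparing largeness statements about two different pairs. The paper's actual proof of (2) is of a different nature entirely: it invokes the Ellis--Glasner--Shapiro structure theorem to show that a non-PI minimal system is pairwise central sensitive (so a pairwise central$^*$-equicontinuous system is PI), then shows every point of $X\times X$ is central recurrent, so every orbit closure in $X\times X$ has a dense set of minimal points and, by the characterization of PI systems, is minimal; distality follows from pointwise minimality of the product. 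Your outline does not engage with this structural input, and the elementary route you propose does not close.
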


It is well known that a dynamical system $(X,G)$ is distal if and only if every point in the product system $(X\times X,G)$ is minimal.
In \cite[Theorem~12.3]{F63} Furstenberg proved that for a dynamical system $(X,G)$ with $G$ being locally compact if $(X,G)$ is distal then it admits an invariant measure.
If the acting group $G$ is abelian, then every minimal system $(X,G)$ admits an invariant measure and the product system $(X\times X,G)$ has a dense set of minimal points. Thus in the case of abelian group actions we can remove those conditions in (1) and (2) of Theorem~\ref{thm:main-resut1}. It is interesting to know whether we can remove those conditions in general. 

\medskip

Nilsystems are a class of systems of the form $(X,T)$ where $X$ is a nilmanifold (a compact homogeneous space of a nilpotent Lie group ) and $T$ is a niltranslation (a translation on $X$ defined by an element). It is well known that nilsystems are distal. 
Nilsystem is important in the study of the convergence of some nonconventional ergodic averages \cite{HK05}.
In \cite{HKM10} Host, Kra and Maass introduced the regionally proximal relation $\RP^{[k]}$ of order $k$ for $k\in\mathbb{N}$, and showed  in \cite[Theorem 1.3]{HKM10} that $\RP^{[k]}$ is an invariant closed equivalence relation for a minimal distal system, 
and  in \cite[Theorem 1.2]{HKM10} that for a minimal system $\RP^{[k]}$ is trivial if and only if the
system itself is an inverse limit of $k$-step nilsystems.
In \cite[Theorem 3.5]{SY2012} Shao and Ye showed that $\RP^{[k]}$ is an invariant closed equivalence relation for any minimal system and also remarked this result can be extended to abelian group actions without difficulty.
In \cite{DSMSY13} Dong et al. introduced $\infty$-step nilsystems which are the systems with trivial $\RP^{[\infty]}:=\bigcap_{k=1}^\infty \RP^{[k]}$ and showed in \cite[Theorem 3.6]{DSMSY13}  that a minimal system is an $\infty$-step nilsystem if and only if it is an inverse limit of minimal nilsystems.
There exist some minimal distal systems which are not $\infty$-step nilsystem, see \cite[Section 5]{DSMSY13}.
Distal systems are characterized by the IP$^*$-recurrence property for $\mathbb{N}$-actions \cite{F81}. 
In \cite[Theorem 8.1.7]{HSY2016} Huang, Shao and Ye proved that 
a minimal system $(X,T)$ is $\infty$-step nilsystem if and only if every point in $X$ is FIP$^*$-recurrent. 
Bergelson and Leibman \cite{BL18} provided a characterization, in terms of recurrence properties, of nilsystems.
In general, we say that a group action system $(X,G)$ is of order $k$ if the regionally proximal relation of order $k$ is trivial, see e.g. \cite{GGY18}.
Motivated by the above results,  we have the following characterization of  systems of order $\infty$.

\begin{thm}\label{thm:main-resut2}
	Let $(X,G)$ be a  dynamical system with $G$ being abelian.
	Then $(X,G)$ is a system of order $\infty$ if and only if
	it is pairwise FIP$^*$-equicontinuous.
\end{thm}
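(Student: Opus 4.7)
The plan is to handle the two directions separately, both rooted in the interplay between FIP$^*$-sets and the cube-structure characterization of $\RP^{[k]}$.

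Forward direction ($\Rightarrow$). Assume $(X, G)$ is a minimal system of order $\infty$. Since $G$ is abelian, the structure theorem of \cite[Theorem~3.6]{DSMSY13}, extended to abelian actions as indicated by the remark following \cite[Theorem~3.5]{SY2012} and in \cite{GGY18}, realizes $(X, G)$ as an inverse limit of minimal nilsystems. Pairwise FIP$^*$-equicontinuity passes through inverse limits by uniform continuity of the factor maps, reducing the problem to a single minimal nilsystem $X = H/\Gamma$. Being distal, the product $(X \times X, G)$ has each orbit closure $M_{x, y}$ a minimal sub-nilmanifold. For $(x, y)$ close to the diagonal, $\{g \in G : d(gx, gy) < \eps\}$ coincides with the return-time set of $(x, y)$ in $M_{x, y}$ to an open neighborhood of $(x, y)$; this is a nil-Bohr$_0$ set, hence FIP$^*$ by \cite[Theorem~8.1.7]{HSY2016} applied to $(x, y)$ inside $M_{x, y}$. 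A compactness argument on a neighborhood of the diagonal yields a uniform $\delta$.

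Reverse direction ($\Leftarrow$). Assume $(X, G)$ is pairwise FIP$^*$-equicontinuous and argue by contradiction: let $(x, y) \in \RP^{[\infty]}$ with $x \neq y$. The cube characterization of $\RP^{[k]}$ for abelian actions (cf.\ \cite{SY2012}) gives: for every $k$ and every $\eta > 0$ there exist $n_1, \ldots, n_k \in G$ such that $d(n_I x, n_I y) < \eta$ for every nonempty $I \subseteq \{1, \ldots, k\}$, where $n_I = \sum_{i \in I} n_i$. Set $\eps_0 = d(x, y)/4$, let $\delta$ be the pairwise FIP$^*$-equicontinuity modulus for $\eps_0$, and take $\eta < \delta$. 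For each nonempty $I$, the pair $(n_I x, n_I y)$ is $\delta$-close, so pairwise FIP$^*$-equicontinuity yields an FIP$^*$-set $F_I \subseteq G$ along which this pair stays $\eps_0$-close; equivalently, the set $\{g : d(gx, gy) < \eps_0\}$ contains the translate $F_I + n_I$ for every nonempty $I$. Patching these translates together against the FIP-set $\{n_I\}$ via the FIP/FIP$^*$ duality---FIP$^*$-sets form a filter closed under finite intersection and every FIP$^*$-set meets every FIP-set of sufficiently high rank---one locates a $g_* \in G$ for which the chain of approximations collapses to $d(x, y) < 2 \eps_0$, contradicting $d(x, y) = 4 \eps_0$.

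The main obstacle will be the combinatorial step in the reverse direction: extracting the explicit $g_* \in G$ from the $2^k - 1$ FIP$^*$-sets $\{F_I\}$ together with the FIP-set $\{n_I\}$ requires a careful analysis of how the FIP$^*$ filter behaves under translation by subset-sums $n_J$, and how to force the resulting combinatorial configuration to witness $\eps_0$-closeness at a point where $(x, y)$ itself is visible. The forward direction, by contrast, is a fairly routine consequence of the inverse-limit structure together with the pointwise FIP$^*$-recurrence of minimal nilsystems established in \cite[Theorem~8.1.7]{HSY2016}.
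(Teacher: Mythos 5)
Your reverse direction has a genuine gap at exactly the step you flag as the ``main obstacle,'' and I do not believe the strategy closes. Knowing that $\{h\in G: d(hx,hy)<\eps_0\}$ contains the translates $F_I+n_I$ of FIP$^*$-sets tells you that the pair $(x,y)$ comes $\eps_0$-close at many times $g_*$, but the existence of such a $g_*$ says nothing about $d(x,y)=d(ex,ey)$: you would need $g_*=e$, or an equicontinuity statement you can evaluate at the single element $g_*^{-1}$, and membership of one prescribed element in an FIP$^*$-set is never guaranteed. The ingredient actually needed is a measure-theoretic transfer lemma (the paper's Proposition~\ref{prop:supp-FIP=subset}): in a system carrying an invariant measure of full support (automatic here since $G$ is abelian and one restricts to minimal systems), every FIP-set $F$ and every nonempty open $U$ admit a point $z\in U$ and an FIP-subset $F'\subset F$ with $F'\subset N(z,U)$; this is proved by a Gillis-type multiple-intersection estimate (Proposition~\ref{prop:measure-skill}) and an induction on the rank of the finite product sets. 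With it the argument never touches $\RP^{[\infty]}$-pairs: if some $x$ were not FIP$^*$-recurrent, then $N(x,X\setminus B(x,2\delta))$ would contain an FIP-set, the lemma would produce $v$ near $x$ returning to $B(x,\delta)$ along an FIP-subset of it, forcing $d(gv,gx)>\delta$ on that FIP-subset and contradicting pairwise FIP$^*$-equicontinuity at $x$; then Theorem~\ref{thm:AA-fip-star-rec} and Theorem~\ref{thm:PR-infty}(2) make $\RP^{[\infty]}$ trivial. Without a substitute for that lemma your contradiction does not materialize.

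Your forward direction, while morally correct, is both heavier and shakier than necessary. The inverse-limit-of-nilsystems description from \cite{DSMSY13} is a $\mathbb{Z}$-action theorem and is not available for a general abelian $G$ (the paper deliberately avoids it); and even granting it, your argument needs FIP$^*$-recurrence of the \emph{pair} $(x,y)$ inside its orbit closure $M_{x,y}\subset X\times X$, which requires you to show that $M_{x,y}$ is itself of order $\infty$ --- an unproved claim. None of this is needed: in a system of order $\infty$ every point is FIP$^*$-recurrent by Theorem~\ref{thm:AA-fip-star-rec}, so for $U=B(x,\eps/3)$ and $y,z\in U$ the set $N(y,U)\cap N(z,U)$ is FIP$^*$ by Proposition~\ref{prop:FIP*-set}, and along it $d(gy,gz)\le\diam(U)<\eps$. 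That three-line argument (the FIP analogue of Lemma~\ref{lem:distal-system-IP-star-eq}) replaces your entire reduction.
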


The organization of this the paper is as follows. 
In Section 2, we give some basic definitions and related results which will be used later. 
In Section 3, we introduce pairwise IP$^*$-equicontinuity and prove a slight generalization of Theorem~\ref{thm:main-resut1}~(1).
We also study the concept of almost pairwise IP$^*$-equicontinuity and obtain a dichotomy result for minimal systems and the structure of almost pairwise IP$^*$-equicontinuous system.
In Section 4, we first recall the structure theorem for (metric) minimal systems and then prove Theorem~\ref{thm:main-resut1}~(2). 
In Section 5, we introduce pairwise FIP$^*$-equicontinuity and prove Theorem~\ref{thm:main-resut2}.
 
\section{Preliminaries}

\subsection{Topological dynamical system}
Let $X$ be a compact metric space with a compatible metric $d$ and $(G,\cdot)$ be an infinite discrete
group with the identity element $e$.
A \emph{$G$-action} on $X$ is a continuous map $\Pi\colon G\times X\to X$ satisfying
$\Pi(e,x)=x$, $\forall x\in X$
and $\Pi(g,\Pi(h,x))=\Pi(gh,x)$, $\forall x\in X$, $g,h\in G$.
We say that the triple $(X,G,\Pi)$ is a \emph{topological dynamical system}.
For convenience, we will use the pair $(X,G)$ instead of $(X,G,\Pi)$ to denote the topological dynamical system, and $gx:=\Pi(g,x)$ if the map $\Pi$ is unambiguous.
For any $n\in \bbn$, there is a natural $G$-action on the $n$-fold product space $X^n$
as $g(x_1,\dotsc,x_n)=(gx_1,\dotsc,gx_n)$ for every $(x_1,\dotsc,x_n)\in X^n$. The \emph{orbit} of a point $x\in X$ is the set
$Gx:=\{gx\colon\ g\in G\}$.

A nonempty $G$-invariant closed subset $Y\subseteq X$ defines naturally a subsystem $(Y, G)$ of $(X, G)$.
A dynamical system $(X,G)$ is called \emph{minimal} if it contains no proper subsystems.
Each point belonging to some minimal subsystem of $(X,G)$ is called a \emph{minimal point}.
By  Zorn's Lemma, every topological dynamical system has a minimal subsystem.
For two subsets $U$ and $V$ of $X$, define the \emph{return time set}  of $U$ and $V$ by 
\[
N(U,V):=\{g\in G\colon gU\cap V\neq\emptyset\}.
\]
For a point $x\in X$, we write $N(x,V)$ instead of $N(\{x\},V)$ for simplicity. 
A dynamical system $(X,G)$ is called \emph{transitive} if for any two nonempty open subsets $U$ and $V$ of $X$ one has $N(U,V)\neq\emptyset$, and \emph{weakly mixing} if the product system $(X\times X,G)$ is transitive. 
A point $x\in X$ is called \emph{transitive} if the orbit of $x$ is dense in $X$.
Recall that a subset of $X$ is \emph{residual} if it contains a dense $G_\delta$ subset of $X$.
As $X$ has a countable basis, every transitive system has a residual subset of transitive points. 

\medskip
We say that a dynamical system $(X,G)$  is \emph{equicontinuous} if for any $\varepsilon>0$, there exists $\delta>0$ such that if $d(x,y)<\delta$ then $d(gx,gy)<\varepsilon$ for all $g\in G$. 
A pair $(x_1,x_2)\in X\times X$ is said to be \emph{proximal} if $\inf_{g\in G}d(gx_1, gx_2)=0$,
and \emph{distal} if $\inf_{g\in G}d(gx_1, gx_2)>0$. 
A point $x\in X$ is \emph{distal} if for any $y\in \overline{Gx}\setminus \{x\}$, $(x,y)$ is a distal pair.
We say that a dynamical system $(X,G)$  is \emph{distal} if any two distinct points of $X$ form a distal pair.
It is easy to see that a dynamical system is distal if and only if every point is distal, and every equicontinuous system is distal. 
A dynamical system $(X,G)$ is called \emph{point-distal} if the collection of distal points is residual in $X$.

The following characterization of distal systems is a classical result, see e.g. \cite[Theorem 5.6]{A88}.

\begin{lem} \label{lem:distal-prod-pointwise-minimal}
A dynamical system $(X,G)$ is distal if and only if every point in the product system $(X\times X,G)$ is minimal.
\end{lem}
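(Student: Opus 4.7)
The plan is to treat the two implications separately. The easy direction $(\Leftarrow)$ goes as follows: assume every point of $(X\times X,G)$ is minimal and take distinct $x,y\in X$. Let $Z=\overline{G(x,y)}$ and let $\Delta=\{(z,z):z\in X\}$ be the diagonal, which is closed and $G$-invariant. If $(x,y)$ were proximal, choose a net $g_\alpha$ with $d(g_\alpha x,g_\alpha y)\to 0$; passing to a convergent subnet gives $g_\alpha(x,y)\to (z,z)\in Z\cap \Delta$. Then $Z\cap\Delta$ is a nonempty closed $G$-invariant subset of the minimal system $Z$, forcing $Z\subseteq\Delta$ and hence $x=y$, a contradiction. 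So every pair of distinct points is distal, i.e., $(X,G)$ is distal.

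For the harder direction $(\Rightarrow)$, I would use the Ellis (enveloping) semigroup $E:=E(X,G)$, defined as the closure of the set $\{g:g\in G\}$ in $X^X$ with the product topology. Then $E$ is a compact right-topological semigroup acting on $X$ (and, coordinatewise, on $X\times X$), and $\overline{Gw}=Ew$ for every point $w$. The strategy is to show that distality of $(X,G)$ forces $E$ to be a group; once this is known, minimality of every $(x,y)\in X\times X$ is automatic, because any $(x',y')\in E(x,y)$ can be written as $p(x,y)$ for some $p\in E$, and $p^{-1}\in E$ then returns $(x,y)$ to the orbit closure $\overline{G(x',y')}=E(x',y')$.

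To prove $E$ is a group I would proceed in three substeps. First, each $p\in E$ is injective: if $p(a)=p(b)$ for distinct $a,b$, take a net $g_\alpha\to p$; then $d(g_\alpha a,g_\alpha b)\to d(pa,pb)=0$, contradicting distality. Second, by the Ellis--Numakura lemma the compact right-topological semigroup $E$ contains an idempotent $u$; from $u(ux)=u(x)$ and injectivity of $u$ we obtain $ux=x$ for all $x$, so $u$ is the identity $e$ of $E$. Third, a minimal left ideal $L\subseteq E$ contains an idempotent, hence contains $e$, so $L=E$; therefore for every $p\in E$ the left ideal $Ep$ equals $E$, giving $q\in E$ with $qp=e$. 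Applying the same reasoning to $q$ yields $r\in E$ with $rq=e$, and then $p=(rq)p=r(qp)=r$, so $pq=rq=e$ as well. Thus every element of $E$ has a two-sided inverse and $E$ is a group. The main obstacle is the semigroup machinery (existence of idempotents and structure of minimal left ideals in compact right-topological semigroups); both ingredients are standard and can be quoted from \cite{A88}, so in the end it is natural simply to cite \cite[Theorem~5.6]{A88} as the excerpt does.
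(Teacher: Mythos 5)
Your proposal is correct. The paper does not actually prove this lemma---it states it as classical and simply cites \cite[Theorem~5.6]{A88}---and your argument is precisely the standard proof behind that citation: the diagonal argument for the direction from pointwise minimality of the product to distality, and for the converse the Ellis enveloping-semigroup theorem (distality forces every $p\in E(X,G)$ to be injective, hence every idempotent is the identity, hence $E$ is its own unique minimal left ideal and is a group, which makes every point of $X\times X$ minimal since $\overline{Gw}=Ew$), with the two semigroup ingredients you quote (Ellis--Numakura and idempotents in minimal left ideals) being exactly the standard facts available in \cite{A88}.
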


Denote by  $P(X,G)$ the proximal relation on $X$, that is the collection of all proximal pairs of $(X,G)$.
It is clear that $P(X,G)$ is a reflexive, symmetric, $G$-invariant relation, but it is in general not transitive or closed. 
We will need the following two result about the proximal relation. 

\begin{thm}[{\cite[Theorem 6.13]{A88}}]\label{thm:proximal}
Let $(X,G)$ be a dynamical system and $x\in X$. Then for any minimal subset $M$ of $\overline{Gx}$, there exists some $y\in M$ such that $(x,y)$ is proximal.
\end{thm}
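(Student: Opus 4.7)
The plan is to use the Ellis enveloping semigroup. Let $E = E(X,G)$ denote the pointwise closure in $X^X$ of the maps $\{x \mapsto gx : g \in G\}$, equipped with composition; then $E$ is a compact right topological semigroup acting on $X$ by evaluation, and for every $z \in X$ one has $Ez = \overline{Gz}$. The two algebraic ingredients I would invoke are classical: by Zorn's lemma every closed left ideal of $E$ contains a minimal left ideal, and by the Ellis--Numakura lemma every compact right topological semigroup, and in particular every minimal left ideal of $E$, contains an idempotent.

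Given $x \in X$ and a minimal subset $M \subseteq \overline{Gx} = Ex$, first I would choose any $y_0 \in M$ and write $y_0 = p_0 x$ for some $p_0 \in E$. The set $Ep_0$ is compact as the image of $E$ under the continuous right multiplication $q \mapsto qp_0$, and is visibly a left ideal since $E(Ep_0) \subseteq Ep_0$. Applying the two ingredients to $Ep_0$ yields a minimal left ideal $I \subseteq Ep_0$ containing an idempotent $u$. Set $y := ux$; then writing $u = rp_0$ for some $r \in E$, one has
\[
y = ux = r(p_0 x) = r y_0 \in E y_0 = \overline{G y_0} = M,
\]
where the last equality uses minimality of $M$ together with $y_0 \in M$.

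Finally I would verify that $(x,y)$ is proximal by exploiting idempotency:
\[
uy = u(ux) = u^2 x = ux = y,
\]
so $ux = uy$. Approximating $u$ by a net $(g_\alpha)$ from $G$ inside $E$, the definition of the product topology on $X^X$ gives $g_\alpha x \to ux$ and $g_\alpha y \to uy$, whence $d(g_\alpha x, g_\alpha y) \to 0$ and thus $\inf_{g \in G} d(gx, gy) = 0$, which is proximality.

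The key subtlety, and where I expect the main care is required, is the localization of the idempotent inside the specific left ideal $Ep_0$ rather than inside an arbitrary minimal left ideal of $E$: that is precisely what forces $y = ux$ to land in $M$ rather than merely in the larger orbit closure $\overline{Gx}$. Everything else is formal manipulation within the compact right topological semigroup $E$.
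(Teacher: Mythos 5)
Your proof is correct: the localization of the idempotent in the left ideal $Ep_0\subseteq\{p\in E\colon px\in M\}$ is exactly the point that makes $y=ux$ land in $M$, and the proximality from $ux=uy$ via a net from $G$ is sound. The paper offers no proof of its own here---the statement is quoted as \cite[Theorem 6.13]{A88}---and your argument (minimal left ideal of the enveloping semigroup, idempotent via Ellis--Numakura) is essentially the standard proof found in that reference, so there is nothing to flag.
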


\begin{lem}[{\cite[Corollary 6.11]{A88}}] 
\label{lem:proximal-closed}
	Let $(X,G)$ be a dynamical system.
	If the proximal relation $P(X,G)$ is closed, then it is an equivalence relation on $X$.
\end{lem}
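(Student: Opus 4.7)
The plan is to show transitivity of $P := P(X,G)$; reflexivity and symmetry hold automatically, so transitivity is the only content. The key idea is to lift the problem from $X^2$ to the triple product $(X^3,G)$, so that a single application of Theorem~\ref{thm:proximal} controls all three coordinates simultaneously. Given $(x,y),(y,z)\in P$, I would pick (via Zorn's lemma) a minimal subset $M$ of $\overline{G(x,y,z)}\subseteq X^3$ and apply Theorem~\ref{thm:proximal} in the system $(X^3,G)$ to obtain a point $(a,b,c)\in M$ such that $\bigl((x,y,z),(a,b,c)\bigr)$ is a proximal pair in $X^3$. The gain is a single net $(g_\alpha)\subseteq G$ along which $d(g_\alpha x,g_\alpha a)$, $d(g_\alpha y,g_\alpha b)$, and $d(g_\alpha z,g_\alpha c)$ all tend to $0$.

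The next step is to cash in closedness of $P$. Since $P$ is $G$-invariant and closed, $\overline{G(x,y)}\subseteq P$ and $\overline{G(y,z)}\subseteq P$. Projecting $M$ onto its first two, respectively last two, coordinates places $(a,b)\in\overline{G(x,y)}$ and $(b,c)\in\overline{G(y,z)}$, hence $(a,b),(b,c)\in P$. The crucial observation is that a minimal proximal pair must be diagonal: since $(a,b,c)$ is a minimal point of $X^3$, the image $(a,b)$ under the equivariant projection $X^3\to X^2$ is a minimal point of $X^2$, and a net witnessing proximality of $(a,b)$ can be refined so that $h_\beta(a,b)\to(w,w)$ for some $w\in X$. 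Therefore the set $\overline{G(a,b)}\cap\Delta$, with $\Delta=\{(u,u):u\in X\}$, is a nonempty closed $G$-invariant subset of the minimal set $\overline{G(a,b)}$; minimality forces $\overline{G(a,b)}\subseteq\Delta$, so $a=b$. The identical argument applied to $(b,c)$ gives $b=c$, hence $a=b=c$. The simultaneous convergence from the first paragraph combined with the triangle inequality then yields $d(g_\alpha x,g_\alpha z)\to 0$, so $(x,z)\in P$.

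The main obstacle is the decision to lift to $X^3$ rather than work on $X^2$: a naive approach applying Theorem~\ref{thm:proximal} separately to $\overline{G(x,y)}$ and $\overline{G(y,z)}$ yields two unrelated nets and no way to join them. Working in $X^3$ supplies a \emph{single} minimal target $(a,b,c)$ approached along a \emph{single} net, which is exactly what is needed to use closedness of $P$ twice (on $(a,b)$ and $(b,c)$) and then read off $a=c$ uniformly. The remaining verifications (the projection of a minimal set being minimal, and the ``minimal proximal pair is diagonal'' step) are short and use only minimality and the definition of $P$.
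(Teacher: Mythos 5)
Your proof is correct. Note first that the paper does not actually prove this lemma: it is quoted verbatim from Auslander's book \cite[Corollary~6.11]{A88}, where it is obtained from the algebraic theory of the enveloping semigroup (minimal left ideals and idempotents). Your argument is a genuinely different, self-contained route that uses only tools already stated in the paper, namely Theorem~\ref{thm:proximal} applied to the product system $(X^3,G)$ together with elementary minimality facts. All the steps check out: lifting to $X^3$ gives a single minimal point $(a,b,c)$ and a single net along which all three coordinates are simultaneously attracted; closedness plus $G$-invariance of $P$ puts $(a,b)$ and $(b,c)$ in $P$; the projection of a minimal set is minimal, and a minimal proximal pair must lie on the diagonal (its orbit closure meets the closed invariant set $\Delta$, hence is contained in it), so $a=b=c$; the triangle inequality then yields $(x,z)\in P$. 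The one point worth making explicit in a written version is that proximality in $X^3$ is taken with respect to a compatible product metric (e.g.\ the max metric), which is what guarantees that the three coordinate distances tend to $0$ along the \emph{same} net --- this is exactly the gain of working in $X^3$ rather than applying Theorem~\ref{thm:proximal} twice in $X^2$, as you correctly observe. Compared with the enveloping-semigroup proof, your approach is more elementary and fits the toolkit the paper already assembles; the semigroup proof, in exchange, yields the stronger structural statement that $P$ is an equivalence relation precisely when the enveloping semigroup has a unique minimal ideal.
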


Let $M(X)$ be the set of all Borel probability measures on $X$. 
We say a measure $\mu\in M(X)$ has \emph{full support} if $\mu(U)>0$ for all nonempty open subset $U$ of $X$, and  is \emph{$G$-invariant} if $\mu(A)=\mu(gA)$ for all $g\in G$ and Borel subset $A$ of $X$.
It is easy to see that every invariant measure on a minimal system has full support.

\subsection{Factor map and the distal structure relation}

Let $(X,G)$ and $(Y,G)$ be two dynamical systems. If there is a continuous surjection $\pi: X \to Y$ with $\pi\circ g = g\circ \pi$ for all $g\in G$,
then we say that $\pi$ is a \emph{factor map},
the system $(Y,G)$ is a \emph{factor} of $(X,G)$
or $(X, G)$ is an \emph{extension} of $(Y,G)$. 
If $\pi$ is a homeomorphism, then we say that $\pi$ is a \emph{conjugacy} and
dynamical systems $(X,G)$ and $(Y,G)$ are \emph{conjugate}. Conjugate dynamical systems can
be considered the same from the dynamical point of view.
Let $\pi\colon (X,G)\to (Y,G)$ be a factor map between two dynamical systems and let
\[
R_\pi=\{(x_1,x_2)\in X\times X \colon \pi(x_1)=\pi(x_2)\}.
\]
Then $R_\pi$ is a $G$-invariant closed equivalence relation on $X$ and $Y=X/R_\pi$. 
In fact, there exists an one-to-one correspondence between
the collection of factors of $(X,G)$ and the collection of $G$-invariant closed equivalence relations on $X$.

A factor map $\pi\colon (X, G) \to (Y, G)$ is called \emph{proximal} if $R_{\pi}\subset P(X,G)$; \emph{almost one-to-one} if $\{x \in X \colon \pi^{-1}(\pi(x)) \textrm{ is a singleton} \}$ is residual in $X$;
\emph{isometric} if for any $\varepsilon>0$, there exists $\delta>0$ such that if 
$x_1, x_2\in R_\pi$ with $d(x_1,x_2)<\delta$ then 
$d(gx_1,gx_2)<\varepsilon$ for all $g\in G$; \emph{weakly mixing} if $(R_\pi,G)$ is transitive. For a factor map $\pi\colon (X,G)\to (Y,G)$ between minimal systems, $\pi$ is almost one-to-one if and only if there exists a point $y\in Y$ such that $\pi^{-1}(y)$ is a singleton (see e.g.~\cite[Corollary 3.6]{Li-Yang1}).
Let $X$ and $Y$ be compact metric spaces.
A map $\pi\colon X\to Y$ is called \emph{semi-open} if for every nonempty open subset $U$ of $X$, $\pi(U)$ has a non-empty interior.
It is easy to see that a map $\pi\colon X\to Y$ is semi-open if and only if for every dense subset $A$ of $Y$, $\pi^{-1}(A)$ is dense in $X$.
Note that any factor map $\pi\colon (X,G)\to (Y,G)$ between minimal systems is semi-open (see e.g.~\cite[Theorem~1.15]{A88}).

Every topological dynamical system $(X,G)$ has a maximal distal factor $(X_d,G)$. That is, $(X_{d},G)$ is a distal factor
and every distal factor of $(X,G)$ is also a factor of $(X_{d}, G)$.
There exists a 
closed $G$-invariant equivalence relations $S_d$ on $(X,G)$,  called the \emph{distal structure relation}, such that $X/S_d=X_d$.
It is well known that $S_d$ is the smallest closed $G$-invariant
equivalence relation containing $P(X,G)$ (see e.g. \cite[Exercise 9.3]{A88}).

\subsection{IP-set, FIP-set and central set}
Let $(G,\cdot)$ be an infinite discrete group.
For a sequence $\{p_i\}_{i\in I}$ in $G$ indexed by a subset $I$ of $\mathbb{N}$, we define the \emph{finite product} of $\{p_i\}_{i\in I}$  by 
\[
FP(\{p_i\}_{i\in I}) =\biggl\{\prod_{i\in \alpha} p_i\colon  \alpha
\text{ is a non-empty finite subset of }  I \biggr\},
\]
where $\prod_{i\in \alpha} p_i$ is the product in increasing order of indices.
A subset $F$ of $G$
is called an \emph{IP-set} if
there exists an infinite sequence $\{p_i\}_{i=1}^\infty$ in $G$ such that
 $FP(\{p_i\}_{i=1}^{\infty})\subset F$.
The following result follows from the well-known Hindman theorem, see e.g.~\cite[Corollary 5.15]{HS2012}.
\begin{thm}\label{thm:hindman}
The collection of IP-sets has the Ramsey property, that is, for every IP-set $F\subset G$ and $F=F_1\cup F_2$, either $F_1$ or $F_2$ is an IP-set. 
\end{thm}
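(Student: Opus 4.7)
The plan is to deduce the Ramsey property of IP-sets directly from the classical Hindman theorem for semigroups, which in the form I want to apply it asserts: for every sequence $\{p_i\}_{i=1}^\infty$ in $G$ and every finite partition $FP(\{p_i\}_{i=1}^\infty)=C_1\cup\dotsb\cup C_r$, there exists an index $j\in\{1,\dotsc,r\}$ and an infinite sub-product sequence $\{q_k\}_{k=1}^\infty\subset FP(\{p_i\}_{i=1}^\infty)$ (each $q_k$ being a product, in increasing order, of finitely many $p_i$'s from pairwise disjoint blocks of indices) such that $FP(\{q_k\}_{k=1}^\infty)\subset C_j$.

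The first step is to pass from the ambient set $F$ to a concrete witnessing IP-set. By hypothesis there is an infinite sequence $\{p_i\}_{i=1}^\infty$ in $G$ with $P:=FP(\{p_i\}_{i=1}^\infty)\subset F$. Intersecting the partition $F=F_1\cup F_2$ with $P$ yields a $2$-coloring $P=(F_1\cap P)\cup(F_2\cap P)$ of the IP-set $P$. The second step is simply to invoke Hindman's theorem on this coloring, producing an infinite sub-product sequence $\{q_k\}_{k=1}^\infty$ and an index $j\in\{1,2\}$ such that $FP(\{q_k\}_{k=1}^\infty)\subset F_j\cap P\subset F_j$. Since by definition any set containing $FP$ of some infinite sequence is an IP-set, this shows $F_j$ is an IP-set, which is the desired conclusion.

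The only genuine content here is Hindman's theorem itself, and I would treat it strictly as a black box, citing \cite[Corollary 5.15]{HS2012} as the paper already does. There is no real obstacle: the passage from ``an IP-set is partitioned'' to ``Hindman's theorem applies'' is purely notational, requiring only the observation that the intersection of $F_1,F_2$ with a witnessing IP-subset $P\subset F$ gives a finite coloring of $P$. The only small care point is to remember that the sub-products $q_k$ of an IP-sequence form another IP-sequence, so that the monochromatic conclusion of Hindman's theorem genuinely gives an IP-set in the sense of the definition used in the paper.
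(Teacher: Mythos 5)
Your proposal is correct and follows exactly the route the paper takes: the paper offers no independent argument but simply derives the statement from Hindman's theorem as cited in \cite[Corollary 5.15]{HS2012}, and your reduction (restricting the partition to a witnessing set $FP(\{p_i\}_{i=1}^\infty)\subset F$ and applying the finite-products form of Hindman's theorem) is the standard, purely notational step the paper leaves implicit.
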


A subset $F$ of $G$ is called an \emph{IP$^{*}$-set} 
if for every IP-set $A\subset G$, $F\cap A\neq\emptyset$.  
An immediate consequence of Theorem \ref{thm:hindman} is the following property of IP$^{*}$-sets, see e.g.~\cite[Theorem 16.6]{HS2012}.
\begin{prop}\label{prop:IP*-set}\leavevmode
\begin{enumerate}
    \item The intersection of two IP$^{*}$-sets is still an IP$^{*}$-set.
    \item A subset $F$ of $G$ is an IP$^{*}$-set if and only if for every IP-set $A\subset G$, $F\cap A$ is an IP-set.
\end{enumerate}
\end{prop}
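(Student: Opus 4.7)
\medskip

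\noindent\textbf{Proof proposal.} My plan is to prove (2) first, since (1) will then follow quickly. For (2), the reverse implication is immediate: if $F\cap A$ is an IP-set for every IP-set $A$, then in particular $F\cap A\neq\emptyset$, so $F$ is IP$^*$ by definition. For the forward implication, I would fix an IP-set $A\subset G$ and partition it as
\[
A = (A\cap F)\cup(A\setminus F).
\]
By Theorem~\ref{thm:hindman} (the Ramsey property of IP-sets), at least one of the two pieces is itself an IP-set. If $A\setminus F$ were an IP-set, then since $F$ is IP$^*$ we would have $F\cap(A\setminus F)\neq\emptyset$, which is absurd. Therefore $A\cap F$ is an IP-set, as required.

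For (1), I would deduce it from (2). Let $F_1$ and $F_2$ be IP$^*$-sets and let $A$ be any IP-set. Applying (2) to $F_1$, the set $A\cap F_1$ is an IP-set. Applying (2) to $F_2$ and the IP-set $A\cap F_1$, the set $(A\cap F_1)\cap F_2 = A\cap(F_1\cap F_2)$ is an IP-set, so in particular nonempty. Since $A$ was an arbitrary IP-set, $F_1\cap F_2$ meets every IP-set, hence is itself an IP$^*$-set.

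The only nontrivial input is Theorem~\ref{thm:hindman}, and it is applied in exactly the standard way: once to split $A$ and extract an IP-subset contained in $F$. There is no real obstacle here beyond carefully noting that the forward direction of (2) is what drives both parts, so I would present (2)$\Rightarrow$(1) rather than attempting to prove (1) directly by a single partition argument. Everything else is routine bookkeeping.
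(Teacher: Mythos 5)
Your argument is correct and is exactly the deduction the paper has in mind: the paper states Proposition~\ref{prop:IP*-set} as an immediate consequence of the Ramsey property in Theorem~\ref{thm:hindman} (citing Hindman--Strauss), and your partition $A=(A\cap F)\cup(A\setminus F)$ followed by deriving (1) from (2) is the standard way to make that deduction explicit. No issues.
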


A subset $F$ of $G$ is called an \emph{FIP-set} if  for every $k\in\bbn$, there exists a finite sequence $\{p_i^{(k)}\}_{i=1}^k$ of $G$ such that $FP(\{p_i^{(k)}\}_{i=1}^k)\subset F$,
and an  \emph{FIP$^*$-set} if for every FIP-set $A\subset G$, $F\cap A$ is nonempty.
It should be noticed that a FIP$^*$-set is called an IP$^*_0$-set in \cite{BL18}.
The following result follows from \cite[Corollary~3]{GR1971} directly.  

\begin{thm}\label{thm:fip-set-Ramsey}
The collection of  FIP-sets has the Ramsey property, that is, for every FIP-set $F\subset G$ and $F=F_1\cup F_2$, either $F_1$ or $F_2$ is an FIP-set. 
\end{thm}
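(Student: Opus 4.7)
The plan is to reduce the statement to a finite ``Folkman-type'' product-Ramsey theorem for groups which is exactly what the cited \cite[Corollary~3]{GR1971} delivers. Concretely, the first step is to isolate the following finite statement: for every $k,r\in\bbn$ there exists $N=N(k,r)\in\bbn$ such that, for any group $G$, any sequence $p_1,\dotsc,p_N$ in $G$, and any coloring $c\colon FP(\{p_i\}_{i=1}^N)\to\{1,\dotsc,r\}$, one can find indices $i_1<i_2<\dotsb<i_k$ in $\{1,\dotsc,N\}$ for which the restricted finite-product set $FP(\{p_{i_j}\}_{j=1}^k)$ is $c$-monochromatic. This is the parameter-set Ramsey theorem of Graham--Rothschild applied to the combinatorial lattice of nonempty subsets of $\{1,\dotsc,N\}$ under union, with the coloring transported along the map $\alpha\mapsto \prod_{i\in\alpha}p_i$ (taken in increasing index order); the fact that the ambient $G$ may be nonabelian causes no issue since the Ramsey statement is purely about subsets of indices, and the non-cancellative nature of the ordered product only means that the map $\alpha\mapsto\prod_{i\in\alpha}p_i$ need not be injective, which is irrelevant for pulling back a coloring.

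With this tool in hand the deduction is short. Let $F\subset G$ be an FIP-set and let $F=F_1\cup F_2$ be any partition. Fix an arbitrary $k\in\bbn$ and put $N=N(k,2)$. By the definition of an FIP-set, there is a finite sequence $\{q_i\}_{i=1}^N$ in $G$ with
\[
FP(\{q_i\}_{i=1}^N)\subset F=F_1\cup F_2.
\]
Defining $c(g)=1$ if $g\in F_1$ and $c(g)=2$ otherwise gives a $2$-coloring of $FP(\{q_i\}_{i=1}^N)$, so the finite Ramsey statement produces indices $i_1<\dotsb<i_k$ and a value $\chi(k)\in\{1,2\}$ with $FP(\{q_{i_j}\}_{j=1}^k)\subset F_{\chi(k)}$. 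Thus every $k\in\bbn$ determines a value $\chi(k)\in\{1,2\}$ such that $F_{\chi(k)}$ contains the finite-product set of some length-$k$ sequence.

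To finish, apply the pigeonhole principle to $\chi\colon\bbn\to\{1,2\}$: some color, say $1$, is attained for infinitely many $k$. For any fixed $j\in\bbn$, choose $k\ge j$ with $\chi(k)=1$; truncating the corresponding length-$k$ sequence to its first $j$ terms shows that $F_1$ contains the finite-product set of some length-$j$ sequence. As $j$ was arbitrary, $F_1$ is an FIP-set, and the theorem follows. The only substantive step of the argument is the finite Folkman-type statement in the first paragraph; this is precisely the step which is not elementary (unlike the infinite Hindman theorem, it cannot be proved by the standard idempotent-ultrafilter trick in a single line), and it is exactly what the citation to \cite[Corollary~3]{GR1971} is supplying. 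Everything else is a direct two-step coloring and pigeonhole argument that requires no further hypothesis on $G$.
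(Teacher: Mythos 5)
Your overall strategy --- reduce to a finite Folkman-type statement, apply it for every $k$, then pigeonhole over $k$ and truncate --- is the right way to flesh out the paper's one-line citation of \cite[Corollary~3]{GR1971} (the paper itself gives no argument beyond that citation), and the second half of your write-up (pigeonhole, plus the observation that $FP$ of a prefix of a sequence is contained in $FP$ of the whole sequence) is correct. However, the finite statement you isolate in your first paragraph is false as stated. You claim one can find \emph{single indices} $i_1<\dotsb<i_k$ such that $FP(\{p_{i_j}\}_{j=1}^k)$ is monochromatic. Counterexample: take $G=\mathbb{Z}$ (written additively), $p_i=1$ for every $i$, and color $c(n)=1$ if $n=1$ and $c(n)=2$ otherwise. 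For any choice of indices, $FP(\{p_{i_j}\}_{j=1}^k)=\{1,2,\dotsc,k\}$, which is bichromatic once $k\geq 2$, so no $N(2,2)$ exists. The same failure is already visible on the index lattice: coloring a nonempty $\alpha\subseteq\{1,\dotsc,N\}$ by the parity of $|\alpha|$ rules out any monochromatic family consisting of all nonempty unions of $k\geq 2$ singletons. This is exactly why Folkman's finite unions theorem (and \cite[Corollary~3]{GR1971}) is stated for $k$ pairwise disjoint nonempty sets $\alpha_1,\dotsc,\alpha_k$, not for singletons.

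The repair is standard and leaves the rest of your proof intact. The finite unions theorem, in the form needed here, produces blocks $\alpha_1,\dotsc,\alpha_k\subseteq\{1,\dotsc,N\}$ that are \emph{ordered}, i.e. $\max\alpha_j<\min\alpha_{j+1}$, on all of whose nonempty unions the pulled-back coloring $\tilde c(\alpha)=c\bigl(\prod_{i\in\alpha}p_i\bigr)$ is constant. Setting $q_j=\prod_{i\in\alpha_j}p_i$, the ordering of the blocks guarantees $\prod_{j\in S}q_j=\prod_{i\in\bigcup_{j\in S}\alpha_j}p_i$ for every nonempty $S\subseteq\{1,\dotsc,k\}$ --- note that mere disjointness of the blocks would \emph{not} suffice for nonabelian $G$, since interleaved blocks do not factor the increasing-order product --- hence $FP(\{q_j\}_{j=1}^k)$ is $c$-monochromatic and contained in $FP(\{p_i\}_{i=1}^N)$. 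With this corrected lemma, your second paragraph goes through verbatim: for each $k$, one of $F_1,F_2$ contains the finite-product set of a length-$k$ sequence; one color recurs for infinitely many $k$; and truncation then yields sequences of every finite length, so that color class is an FIP-set.
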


Similar to Proposition~\ref{prop:IP*-set}, we have the following result about FIP$^{*}$-sets.
\begin{prop}\label{prop:FIP*-set}\leavevmode
\begin{enumerate}
    \item The intersection of two FIP$^{*}$-sets is  also an FIP$^{*}$-set.
    \item A subset $F$ of $G$ is an FIP$^{*}$-set if and only if for every FIP-set $A\subset G$, $F\cap A$ is  an FIP-set.
\end{enumerate}
\end{prop}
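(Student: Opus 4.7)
The plan is to imitate the standard argument for Proposition~\ref{prop:IP*-set}, using the Ramsey property of FIP-sets (Theorem~\ref{thm:fip-set-Ramsey}) in place of Hindman's theorem. I would prove part~(2) first, and then derive part~(1) as a simple corollary by iterating (2).

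For part~(2), the ``if'' direction is immediate: if $F\cap A$ is an FIP-set for every FIP-set $A$, then in particular $F\cap A\neq\emptyset$ for every FIP-set $A$, so $F$ is an FIP$^*$-set. For the ``only if'' direction, suppose $F$ is an FIP$^*$-set and let $A\subset G$ be any FIP-set. Write the partition
\[
A = (A\cap F)\cup (A\setminus F).
\]
By Theorem~\ref{thm:fip-set-Ramsey}, at least one of the two pieces is an FIP-set. If $A\setminus F$ were an FIP-set, then applying the FIP$^*$-property of $F$ to it would yield $F\cap(A\setminus F)\neq\emptyset$, which is absurd. Hence $A\cap F$ must be an FIP-set, as required.

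For part~(1), let $F_1$ and $F_2$ be two FIP$^*$-sets and let $A\subset G$ be any FIP-set. By part~(2) applied to $F_1$, the set $F_1\cap A$ is an FIP-set. Applying part~(2) again, this time to $F_2$ and the FIP-set $F_1\cap A$, we see that
\[
(F_1\cap F_2)\cap A = F_2\cap(F_1\cap A)
\]
is an FIP-set, and in particular nonempty. Since $A$ was an arbitrary FIP-set, $F_1\cap F_2$ is an FIP$^*$-set.

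There is no substantive obstacle here: once Theorem~\ref{thm:fip-set-Ramsey} is available, the argument is purely formal and parallels the IP$^*$ case verbatim. The only point worth checking carefully is that the partition $A=(A\cap F)\cup(A\setminus F)$ genuinely falls under the Ramsey property as stated, which it does because Theorem~\ref{thm:fip-set-Ramsey} is formulated for arbitrary two-cell partitions of an FIP-set.
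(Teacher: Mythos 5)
Your proof is correct and is exactly the argument the paper intends: the paper gives no explicit proof, stating only that the result follows ``similar to Proposition~2.2'' (the IP$^*$ case, which is deduced from the Ramsey property), and your deduction of part~(2) from Theorem~\ref{thm:fip-set-Ramsey} via the partition $A=(A\cap F)\cup(A\setminus F)$, followed by iterating (2) to get (1), is that standard route. No gaps.
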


A subset $F$ of $G$ is called a \emph{central set} if there exists a dynamical system $(X,G)$, $x,y\in X$ and a neighborhood $U$ of $y$ such that $(x,y)$ is a proximal pair,
$y$ a minimal point, and $F\supset N(x,U)$.
A subset $F$ of $G$ is called a \emph{central$^{*}$-set} 
if for every central set $A\subset G$, $F\cap A\neq\emptyset$. 

The following result is proved in \cite[Prosition 8.10]{F81} for the case of $\bbn$. In fact, the proof holds for general group actions naturally.
\begin{prop}
Every central set is an IP-set.
\end{prop}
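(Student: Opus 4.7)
The plan is to unpack the definition of central set and construct an explicit IP-set inside $F$. By hypothesis there exist a dynamical system $(X,G)$, a proximal pair $(x,y)$ with $y$ a minimal point, and an open neighborhood $U$ of $y$ such that $N(x,U) \subset F$. It therefore suffices to exhibit an infinite sequence $\{g_n\}_{n=1}^{\infty}$ in $G$ whose finite products (in increasing order of indices) all lie in $N(x,U)$.

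The heart of the argument is the following \emph{simultaneous return} property: for every neighborhood $V$ of $y$ there exists $g \in G$ with $gx \in V$ and $gy \in V$. To see this, use proximality to obtain a net $\{g_\lambda\}$ in $G$ with $d(g_\lambda x, g_\lambda y) \to 0$, and pass to a subnet so that both $g_\lambda x$ and $g_\lambda y$ converge to a common point $z$. Since $z \in \overline{Gy}$ and $\overline{Gy}$ is minimal, $y$ lies in $\overline{Gz}$, so some net $\{h_\mu\}$ in $G$ satisfies $h_\mu z \to y$. Fix $\mu$ with $h_\mu z$ sufficiently close to $y$; continuity of the homeomorphism given by $h_\mu$ furnishes a neighborhood $W$ of $z$ with $h_\mu(W) \subset V$. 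Choosing $\lambda$ with $g_\lambda x, g_\lambda y \in W$ and setting $g := h_\mu g_\lambda$ does the job.

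With this tool in hand, I would construct $\{g_n\}$ inductively while maintaining the invariant that for every non-empty $\alpha \subset \{1,\ldots,n\}$ the finite product $h_\alpha := \prod_{i \in \alpha} g_i$ in increasing order satisfies $h_\alpha x \in U$ \emph{and} $h_\alpha y \in U$. At stage $n$, continuity of each $h_\beta$ with $\beta \subset \{1,\ldots,n-1\}$ at $y$, combined with the inductive condition $h_\beta y \in U$ (with $h_\emptyset y = y \in U$ trivially when $\beta = \emptyset$), produces an open neighborhood $V_n$ of $y$ such that $h_\beta(V_n) \subset U$ for every such $\beta$. Apply the simultaneous return property to $V_n$ to obtain $g_n \in G$ with $g_n x, g_n y \in V_n$. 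For any new non-empty $\alpha \subset \{1,\ldots,n\}$ containing $n$, write $\alpha = \beta \cup \{n\}$ with $\max\beta < n$; then $h_\alpha = h_\beta \cdot g_n$ and $h_\alpha x = h_\beta(g_n x) \in h_\beta(V_n) \subset U$, and symmetrically $h_\alpha y \in U$, preserving the invariant. Consequently $FP(\{g_n\}_{n=1}^\infty) \subset N(x,U) \subset F$, so $F$ is an IP-set.

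The main obstacle is the bookkeeping of the induction in a possibly non-abelian group: the products must be taken in a prescribed order, and one must simultaneously control \emph{both} $h_\alpha x$ and $h_\alpha y$ throughout the construction, even though only the former is what the conclusion names. Tracking $h_\alpha y \in U$ is precisely what enables the continuity-based shrinking of neighborhoods that feeds the simultaneous return property at each subsequent stage; without it, no uniform choice of $V_n$ at stage $n$ would be available.
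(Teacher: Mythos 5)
Your proof is correct and is essentially the argument the paper relies on: the paper gives no proof of its own but cites \cite[Proposition 8.10]{F81} and remarks that the proof carries over to general group actions, and your write-up is exactly that generalization --- the simultaneous-return property for a proximal pair whose second coordinate is minimal, followed by the inductive choice of $g_n$ returning both $x$ and $y$ into a neighborhood shrunk so that all earlier finite products map it into $U$. Nothing further is needed.
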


We will need the following two results on central sets and central$^*$-sets.
\begin{thm}[{\cite[Theorem 19.27]{HS2012}}]
\label{thm:central-set-Ramsey}
The collection of  central sets has the Ramsey property, that is, for every central set $F\subset G$ and $F=F_1\cup F_2$, either $F_1$ or $F_2$ is a central set. 
\end{thm}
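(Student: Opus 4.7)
The approach is to exploit the algebraic structure on the Stone--\v{C}ech compactification $\beta G$, viewed as a compact right-topological semigroup under the canonical extension of the group operation on $G$. The crux is the classical characterization: a subset $F\subset G$ is central if and only if there is a minimal idempotent $p\in\beta G$ with $F\in p$, i.e., $F$ belongs to the ultrafilter $p$. Once this characterization is in hand, the Ramsey property follows immediately: if $F=F_1\cup F_2$ and $F\in p$ for some minimal idempotent $p$, then since $p$ is an ultrafilter either $F_1\in p$ or $F_2\in p$, and the corresponding piece is itself central.

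For the forward direction of the characterization, suppose $(X,G)$, $x$, $y$, $U$ witness that $F$ is central, so that $y$ is minimal, $(x,y)$ is proximal, and $N(x,U)\subset F$. A standard enveloping-semigroup argument, combining proximality of $(x,y)$ with minimality of $y$, produces a minimal idempotent $u$ in the enveloping semigroup $E(X,G)$ with $ux=y$. The universal property of $\beta G$ furnishes a continuous semigroup homomorphism $\Phi\colon\beta G\to E(X,G)$ extending $g\mapsto g|_X$, and minimal idempotents of $E(X,G)$ can be lifted along $\Phi$ to minimal idempotents of $\beta G$; pick such a $p$ with $\Phi(p)=u$. Since evaluation at $x$ is continuous from $E(X,G)$ to $X$ and $u(x)=y\in U$, the set $\{q\in\beta G\colon\Phi(q)(x)\in U\}$ is an open neighborhood of $p$, which by the standard correspondence between clopen neighborhoods in $\beta G$ and the ultrafilter $p$ forces $N(x,U)\in p$, and hence $F\in p$.

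For the reverse direction, given a minimal idempotent $p\in\beta G$ with $F\in p$, I would produce a metric dynamical witness by considering the shift system $(\{0,1\}^G,G)$ together with the orbit closure of the characteristic function $\chi_F$, and letting $y$ be the $p$-limit of the orbit of $\chi_F$. Minimality of $y$ reflects that $p$ lies in a minimal left ideal of $\beta G$, proximality of $(\chi_F,y)$ follows from $pp=p$, and the clopen neighborhood $U=\{w\in\{0,1\}^G\colon w(e)=1\}$ satisfies $N(\chi_F,U)\supset F$ by construction. Passing, if necessary, to a separable metric subsystem provides the witnesses required by our definition of central set.

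The main obstacle is the metric/non-metric mismatch: $\beta G$ is far from being metrizable for infinite $G$, so transferring arguments between $\beta G$ and the metric systems in the definition of central set requires care, particularly when lifting idempotents along $\Phi$ and when extracting a genuine metric witness in the reverse direction. A purely dynamical alternative avoids $\beta G$ altogether: work inside the enveloping semigroup $E(X,G)$ of the witness system and decompose the sets $N(x,U)\cap F_1$ and $N(x,U)\cap F_2$ at the level of minimal idempotents, using the absorbing property of an idempotent on its minimal left ideal to funnel the return times of a suitably chosen point into one of $F_1$ or $F_2$.
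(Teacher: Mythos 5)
The paper offers no proof of its own here; it cites Hindman--Strauss \cite[Theorem 19.27]{HS2012}, which is precisely the equivalence of the dynamical definition of central sets with membership in a minimal idempotent ultrafilter of $\beta G$, from which partition regularity is immediate. Your proposal reproduces exactly this route (including correct sketches of both directions of the equivalence and an honest flag of the metrizability issue for uncountable $G$), so it is correct and essentially the same argument as the cited source.
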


\begin{prop}[{\cite[Lemma 15.4]{HS2012}}]\label{prop:central*-set}\leavevmode
\begin{enumerate}
    \item The intersection of two central$^{*}$-sets is still a central$^{*}$-set.
    \item A subset $F$ of $G$ is a central$^{*}$-set if and only if for every  central set $A\subset G$, $F\cap A$ is a central set.
\end{enumerate}
\end{prop}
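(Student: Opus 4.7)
The plan is to apply the Ramsey property of central sets (Theorem~\ref{thm:central-set-Ramsey}) to obtain both statements, treating (2) first and deducing (1) as a routine consequence; this is the same template used to prove the analogous Propositions~\ref{prop:IP*-set} and~\ref{prop:FIP*-set} for IP$^*$- and FIP$^*$-sets.

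For part (2), the backward direction is immediate from the definition: if $F\cap A$ is a central set for every central set $A\subset G$, then in particular $F\cap A\ne\emptyset$, so $F$ is a central$^*$-set. For the forward direction, suppose $F$ is a central$^*$-set and let $A\subset G$ be an arbitrary central set. The idea is to partition
\[
A = (F\cap A)\cup(A\setminus F)
\]
and invoke Theorem~\ref{thm:central-set-Ramsey} to conclude that at least one of the two pieces is itself central. If $A\setminus F$ were central, then since $F$ is a central$^*$-set we would have $F\cap(A\setminus F)\ne\emptyset$, which is absurd because the two sets are disjoint by definition. Hence $F\cap A$ must be a central set, which is precisely what (2) claims.

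For part (1), let $F_1,F_2$ be central$^*$-sets and let $A\subset G$ be any central set. By (2) applied to $F_1$, the intersection $F_1\cap A$ is a central set; applying (2) again with $F_2$ in the role of $F$ and $F_1\cap A$ in the role of $A$, we get that $(F_1\cap F_2)\cap A = F_2\cap(F_1\cap A)$ is central, hence nonempty. Since $A$ was arbitrary, $F_1\cap F_2$ meets every central set, so it is a central$^*$-set.

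No serious obstacle arises, since the Ramsey property for central sets is available as a black box. The one point that requires care is making sure to partition $A$ (not $G$) and to observe that $F\cap(A\setminus F)=\emptyset$ by definition of set difference, which supplies the contradiction in the forward direction of (2); everything else is bookkeeping.
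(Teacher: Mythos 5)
Your proof is correct, and it follows exactly the route the paper intends: the paper itself only cites \cite[Lemma 15.4]{HS2012} for this proposition, but it derives the parallel statements for IP$^*$-sets (Proposition~\ref{prop:IP*-set}) and FIP$^*$-sets (Proposition~\ref{prop:FIP*-set}) as ``immediate consequences'' of the corresponding Ramsey property, which is precisely your partition argument using Theorem~\ref{thm:central-set-Ramsey}. The only implicit ingredient worth noting is that central sets are nonempty (they are IP-sets by the paper's Proposition 2.9), which is what makes your backward direction of (2) and the conclusion of (1) go through.
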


Let $(X,G)$ be a dynamical system.
A point $x\in X$ is called \emph{IP$^*$-recurrent} (resp. \emph{FIP$^*$-recurrent}, \emph{central recurrent}, \emph{central$^*$-recurrent}) if for any neighborhood $U$ of $x$, $N(x,U)$ is an
IP$^*$-set (resp. an FIP$^*$-set, a central set, a central$^*$-set).
We have the following characterization of distal points, see \cite[Theorem 9.11 and Proposition 9.17]{F81} for $\bbn$-actions
and \cite[Corollary 5.30]{EEN} for general group actions.

\begin{thm}\label{thm:distal-ip-star-rec}
	Let $(X,G)$ be a dynamical system and $x\in X$.
	Then $x$ is distal if and only if it is IP$^*$-recurrent  if and only if it is central$^*$-recurrent.
\end{thm}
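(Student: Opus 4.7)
The plan is to establish the cycle distal $\Rightarrow$ IP$^*$-recurrent $\Rightarrow$ central$^*$-recurrent $\Rightarrow$ distal, which closes the two stated equivalences.

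The implication IP$^*$-recurrent $\Rightarrow$ central$^*$-recurrent is immediate from the preceding proposition that every central set is an IP-set: if $N(x,U)$ meets every IP-set, then it automatically meets every central set. For central$^*$-recurrent $\Rightarrow$ distal I argue by contrapositive. Suppose $x$ is not distal. Then there is $y\in\overline{Gx}\setminus\{x\}$ with $(x,y)$ proximal, and by applying Theorem~\ref{thm:proximal} to a minimal subset of $\overline{Gx}$ (treating separately the easy case where $\overline{Gx}$ is itself minimal, in which the original $y$ is already minimal), I may take $y$ to be a minimal point of $X$ distinct from $x$. Choose disjoint open neighborhoods $U\ni x$ and $V\ni y$. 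By the very definition of central set, applied inside $(X,G)$ with the proximal pair $(x,y)$ and the minimal point $y$, the return time set $N(x,V)$ is central; however $N(x,U)\cap N(x,V)=\emptyset$ because $U\cap V=\emptyset$, so $N(x,U)$ fails to be central$^*$, contradicting the hypothesis on $x$.

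For the remaining direction distal $\Rightarrow$ IP$^*$-recurrent I would use the Ellis enveloping semigroup $E(X,G)$ together with the algebra of the Stone-\v{C}ech compactification $\beta G$. Fix an IP-set $A=FP(\{g_i\}_{i=1}^{\infty})$ and a neighborhood $U$ of $x$. Because $A$ is an IP-set, its closure in $\beta G$ is a compact right-topological subsemigroup, and the Ellis--Numakura lemma (essentially the Galvin--Glazer proof of Hindman's theorem) produces an idempotent ultrafilter $p\in\beta G$ with $A\in p$. The formula $T_p x:=p\text{-}\lim_{g\in G} gx$ defines an idempotent $T_p\in E(X,G)$. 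A standard Ellis-semigroup observation shows that for any idempotent $u\in E(X,G)$ the pair $(x,ux)$ is proximal; since $x$ is distal and $ux\in\overline{Gx}$, this forces $ux=x$ for every idempotent $u\in E(X,G)$, and in particular $T_p x=x$. Consequently $\{g\in G: gx\in U\}\in p$, and intersecting with $A\in p$ yields $A\cap N(x,U)\in p$, which is therefore nonempty.

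The main obstacle is this last implication. The first two links in the cycle are bookkeeping with the definitions and with the propositions already recalled in the preliminaries, whereas distal $\Rightarrow$ IP$^*$-recurrent requires both the idempotent-ultrafilter machinery of $\beta G$ (or equivalently the theory of IP-limits and idempotents in $E(X,G)$) and the Ellis characterization of distal points via idempotents in the enveloping semigroup.
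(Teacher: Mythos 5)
You should first note that the paper never proves this theorem itself: it is stated as a known result, cited from \cite[Theorem 9.11 and Proposition 9.17]{F81} for $\mathbb{N}$-actions and from \cite[Corollary 5.30]{EEN} for general group actions. Your proposal therefore supplies an actual argument where the paper gives only references, and the argument is correct; it is essentially the standard proof underlying those references. The cycle distal $\Rightarrow$ IP$^*$-recurrent $\Rightarrow$ central$^*$-recurrent $\Rightarrow$ distal is the right decomposition: the middle arrow is immediate from the paper's proposition that every central set is an IP-set; the third arrow correctly produces a minimal point $y\neq x$ proximal to $x$ (your case split on whether $\overline{Gx}$ is minimal, using Theorem~\ref{thm:proximal} when it is not, is exactly what is needed, since a minimal subset $M\subsetneq\overline{Gx}$ cannot contain $x$), and then $N(x,V)$ is a central set directly from the paper's definition and is disjoint from $N(x,U)$, so $N(x,U)$ is not central$^*$. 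The first arrow is the classical enveloping-semigroup argument, and its steps ($T_p$ idempotent when $p$ is, $(x,ux)$ proximal for every idempotent $u\in E(X,G)$, distality forcing $T_px=x$, whence $N(x,U)\in p$ meets $A\in p$) are all sound. One imprecision should be repaired: the closure of an IP-set $A=FP(\{g_i\}_{i=1}^{\infty})$ in $\beta G$ is in general \emph{not} a subsemigroup (for $p,q\in\overline{A}$ one would need tails of $A$, not $A$ itself, to be members of $q$); the compact right-topological subsemigroup to which Ellis--Numakura must be applied is $\bigcap_{n\geq 1}\overline{FP(\{g_i\}_{i\geq n})}$, any idempotent of which lies in $\overline{A}$ and hence has $A$ as a member. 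With that correction your proof is complete. What your route buys is a self-contained proof inside the paper; what the paper's citation buys is avoiding the $\beta G$/Ellis machinery, which nothing else in the paper develops or requires.
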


We will use the following characterization of central sets, see~\cite[Propositon~5.8]{L12} for $\bbn$-actions, but the proof holds for general groups without strain.

\begin{prop}\label{prop:central-chara}
A subset $F$ of $G$ is central if and only if for every dynamical system $(X,G)$ and $x\in X$
there exists a minimal point $y$ in $\overline{Fx}$ such that $(x,y)$ is proximal, 
where $Fx=\{gx\colon g\in F\}$.
\end{prop}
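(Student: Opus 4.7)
I would prove the claimed equivalence by treating the two implications separately, using a symbolic-system construction for ``$\Leftarrow$'' and the Ellis enveloping semigroup for ``$\Rightarrow$''. For ``$\Leftarrow$'', let $\chi_F\in\{0,1\}^G$ be the indicator of $F$ and equip $\{0,1\}^G$ with the shift action $(g\omega)(h)=\omega(hg)$; then $Y:=\overline{G\chi_F}$ is a compact metric subsystem when $G$ is countable (the case treated in \cite{L12}, to which one reduces in general). Applying the hypothesis to $(Y,G)$ and the point $\chi_F$ produces a minimal $y\in\overline{F\chi_F}$ with $(\chi_F,y)$ proximal. Since $(f\chi_F)(e)=\chi_F(f)=1$ for every $f\in F$, taking limits yields $y(e)=1$, so $U:=\{\omega\in Y\colon\omega(e)=1\}$ is an open neighborhood of $y$, and a direct computation gives $N(\chi_F,U)=\{g\in G\colon\chi_F(g)=1\}=F$. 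Hence $(Y,G)$, $\chi_F$, $y$ and $U$ witness centrality of $F$.

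For ``$\Rightarrow$'', fix a central witness $(X_0,G)$, $x_0,y_0\in X_0$, and open neighborhood $U_0$ of $y_0$ with $y_0$ minimal, $(x_0,y_0)$ proximal, and $N(x_0,U_0)\subset F$. Given any $(Y,G)$ and $x\in Y$, I would work in $(X_0\times Y,G)$ and aim to construct a minimal point $(y_0,y)\in\overline{G(x_0,x)}$ such that $((x_0,x),(y_0,y))$ is proximal. Granted such a point, any net $g_\alpha\in G$ with $g_\alpha(x_0,x)\to(y_0,y)$ satisfies $g_\alpha x_0\to y_0$, hence $g_\alpha\in N(x_0,U_0)\subset F$ eventually, so $y=\lim g_\alpha x\in\overline{Fx}$; meanwhile minimality of $(y_0,y)$ and proximality of $((x_0,x),(y_0,y))$ project to the second coordinate, giving $y$ minimal in $Y$ and $(x,y)$ proximal.

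The main obstacle is producing this $(y_0,y)$. I would use the Ellis enveloping semigroup $E:=E(X_0\times Y)$ together with the continuous surjective semigroup homomorphism $\pi_1\colon E\to E(X_0)$. The key step is to construct a minimal idempotent $u_1\in E(X_0)$ with $u_1x_0=u_1y_0=y_0$: fix a minimal left ideal $L\subset E(X_0)$; since $y_0$ is minimal, $Ly_0=\overline{Gy_0}\ni y_0$, so Ellis's idempotent theorem applied to the closed subsemigroup $\{\ell\in L\colon\ell y_0=y_0\}$ supplies an idempotent $u_1\in L$ with $u_1y_0=y_0$; proximality of $(x_0,y_0)$ supplies some idempotent $v\in L$ with $vx_0=vy_0$; and the left-zero identity $u_1v=u_1$ for idempotents in a common minimal left ideal yields $u_1x_0=u_1vx_0=u_1vy_0=u_1y_0=y_0$. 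Lifting $u_1$ to a minimal idempotent $\tilde u\in E$ with $\pi_1(\tilde u)=u_1$, via the standard lifting property of minimal idempotents under surjective continuous homomorphisms of compact right-topological semigroups, then gives $\tilde u(x_0,x)=(y_0,y)$ with $y:=\pi_2(\tilde u)x$ having the required properties.
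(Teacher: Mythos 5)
The paper does not actually prove this proposition: it only cites \cite[Proposition~5.8]{L12} (stated there for $\bbn$-actions) and asserts that the argument carries over to general groups, so your proposal has to stand on its own. Your ``$\Leftarrow$'' direction is the standard symbolic argument and is correct, modulo the metrizability issue you yourself flag: $\{0,1\}^G$ is metrizable only for countable $G$, and the reduction to that case deserves an actual sentence (the paper's conventions require $X$ compact \emph{metric}, and it glosses over the same point). Your ``$\Rightarrow$'' direction replaces the usual route through minimal idempotents of $\beta G$ (from which one would take $y=p\text{-}\lim_{g\to p}gx$ for a minimal idempotent $p$ containing $F$) by a direct Ellis-semigroup argument in the product $X_0\times Y$; this is a legitimate, more self-contained alternative, and the lifting of the minimal idempotent along $\pi_1$ is standard and used correctly.

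There is, however, one step that fails as written. You fix a minimal left ideal $L\subset E(X_0)$ \emph{first} and then claim that proximality of $(x_0,y_0)$ supplies an idempotent $v\in L$ with $vx_0=vy_0$. Proximality only guarantees that \emph{some} minimal left ideal collapses the pair, not every one. Concretely, for the shift on the orbit closure of the point $x$ with $x(n)=1$ for $n\geq 0$ and $x(n)=0$ for $n<0$ in $\{0,1\}^{\mathbb{Z}}$, with $y=\bar 0$ and $z=\bar 1$, every minimal left ideal $L$ satisfies either $Lx=\{y\}$ or $Lx=\{z\}$, and both types occur; for an $L$ of the second type no $v\in L$ has $vx=vy$, even though $(x,y)$ is proximal. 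The repair is a reordering of choices: the set $I=\{q\in E(X_0)\colon qx_0=qy_0\}$ is a nonempty closed left ideal, so take $L$ to be a minimal left ideal contained in $I$; then every element of $L$ already collapses the pair, the set $\{\ell\in L\colon \ell y_0=y_0\}$ is still a nonempty closed subsemigroup (nonempty because $Ly_0=\overline{Gy_0}\ni y_0$ by minimality of $y_0$), and the idempotent $u_1$ it contains automatically satisfies $u_1x_0=u_1y_0=y_0$, making your $u_1v=u_1$ computation unnecessary. With this change the rest of your argument goes through.
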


\section{\texorpdfstring{Pairwise IP$^*$-equicontinuity and distality}{Pairwise IP*-equicontinuity and distality}}

In this section we first introduce pairwise IP$^*$-equicontinuity and give a characterization of distality via pairwise IP$^*$-equicontinuity.
Then we consider the local version of pairwise IP$^*$-equicontinuity, named almost pairwise IP$^*$-equicontinuity, give the  dichotomy result for minimal systems and characterize the distal structure relation. 
It is worth mentioning that although some proofs in subsection~\ref{section:ip-star} are similar to the ones in~\cite{LY21}, we contribute new ideas to this section in subsection~\ref{subsection:almost-ip-star}.

\subsection{\texorpdfstring{Pairwise IP$^*$-equicontinuity}{Pairwise IP*-equicontinuity}}\label{section:ip-star}
Let $(X,G)$ be a dynamical system. 
A point $x$ in $X$ is called \emph{pairwise IP$^*$-equicontinuous} 
if for any $\eps>0$ there exists a neighbourhood $U$ of $x$ such that for any $y,z\in U$, $\{g\in G\colon d(gy, gz)<\eps\}$ is an IP$^*$-set. 
The dynamical system $(X,G)$ is called \emph{pairwise IP$^*$-equicontinuous} if every point in $X$ is pairwise IP$^*$-equicontinuous.
By the compactness of $X$ it is easy to see that $(X,G)$ is pairwise IP$^*$-equicontinuous if and only if for any $\eps>0$ there exists $\delta>0$ 
such that for any $x,y \in X$ with $ d(x,y)<\delta$, $\{g\in G\colon d(gx, gy)<\eps\}$ is an IP$^*$-set. 

First we have the following elementary results.

\begin{lem} \label{lem:distal-system-IP-star-eq}
	If $(X,G)$ be a distal system, then it is pairwise IP$^*$-equicontinuous.
\end{lem}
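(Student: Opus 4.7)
The plan is to deduce pairwise IP$^*$-equicontinuity directly from the characterization of distality via IP$^*$-recurrence (Theorem~\ref{thm:distal-ip-star-rec}), applied not to $(X,G)$ itself but to its product system $(X\times X, G)$. First I would check that $(X\times X, G)$ is again distal whenever $(X,G)$ is: with the maximum metric on $X\times X$, if $(y_1,z_1)\ne (y_2,z_2)$ then at least one of $(y_1,y_2)$ or $(z_1,z_2)$ is a pair of distinct points in $X$, hence a distal pair in $(X,G)$, which forces $\inf_{g\in G}\max\{d(gy_1,gy_2),\,d(gz_1,gz_2)\}>0$. Thus every point of $X\times X$ is a distal point of the product system.

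Next, I would invoke Theorem~\ref{thm:distal-ip-star-rec} inside $(X\times X, G)$: each point $(y,z)\in X\times X$ is IP$^*$-recurrent, so for every neighborhood $V$ of $(y,z)$ in $X\times X$, the return-time set $\{g\in G \colon (gy, gz)\in V\}$ is an IP$^*$-set. This is the pointwise input; the remaining task is to convert it into the uniform statement in the definition of pairwise IP$^*$-equicontinuity.

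To achieve uniformity I would select a single neighborhood that simultaneously contains every $(y,z)$ near the diagonal. Given $\eps>0$, set $\delta=\eps$ and define the open set $V_\eps=\{(a,b)\in X\times X \colon d(a,b)<\eps\}$, which is open by continuity of $d$. Whenever $d(y,z)<\delta=\eps$, the pair $(y,z)$ lies in $V_\eps$, so $V_\eps$ is a neighborhood of $(y,z)$ in the product space; hence
\[
\{g\in G\colon d(gy,gz)<\eps\}=\{g\in G\colon (gy,gz)\in V_\eps\}
\]
is IP$^*$ by the previous step. This is precisely the uniform condition in the definition of pairwise IP$^*$-equicontinuity with $\delta=\eps$.

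There is no substantive obstacle: the only mild observation is that one must pass to $(X\times X,G)$ in order to apply Theorem~\ref{thm:distal-ip-star-rec}, and that the single ``$\eps$-near-diagonal'' open set $V_\eps$ serves simultaneously as a neighborhood of every sufficiently close pair, which is what produces the uniform $\delta$ without any additional compactness or continuity argument.
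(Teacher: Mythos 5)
Your proof is correct, and it takes a mildly but genuinely different route from the paper's. The paper argues entirely inside $X$: fixing $x$ and $\eps$, it sets $U=B(x,\frac{\eps}{3})$, notes that for $y,z\in U$ the return-time sets $N(y,U)$ and $N(z,U)$ are IP$^*$-sets by Theorem~\ref{thm:distal-ip-star-rec}, intersects them using Proposition~\ref{prop:IP*-set}, and observes that for $g$ in the intersection both $gy$ and $gz$ lie in $U$, so $d(gy,gz)\le\diam(U)<\eps$. You instead pass to the product system $(X\times X,G)$, verify that it is distal (hence every pair $(y,z)$ is a distal, therefore IP$^*$-recurrent, point of the product), and apply recurrence to the single near-diagonal neighborhood $V_\eps$. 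Both arguments rest on the same key input, the equivalence of distality and IP$^*$-recurrence; yours trades the closure of IP$^*$-sets under intersection for the (easy) distality of the product system, and as a small bonus delivers the uniform statement directly with $\delta=\eps$ and a single application of the recurrence theorem. The paper's version stays one level down and is marginally shorter, but there is no gap in your argument.
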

\begin{proof}
	Fix a point $x\in X$ and $\eps>0$.
	Let $U=B(x,\frac{\eps}{3})$ and $y,z\in U$.
	As $y,z$ is distal, $N(y,U)$ and $N(z,U)$ are IP$^*$-sets.
	By Proposition~\ref{prop:IP*-set}, $F:=N(y,U)\cap N(z,U)$ is also an IP$^*$-set.
    Then for any $g\in F$, $d(gy,gz)\leq \diam (U)< \eps$.
    Therefore, $x$ is pairwise IP$^*$-equicontinuous.
	As this holds for arbitrary $x$,
	$(X,G)$ is pairwise IP$^*$-equicontinuous.
\end{proof}

\begin{lem}\label{lem:IP*-eq-limit-distal}
Let $(X,G)$ be a dynamical system. 
If $x\in X$ is pairwise IP$^*$-equicontinuous and a limit point of distal points, then $x$ is a distal point.
\end{lem}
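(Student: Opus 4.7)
The plan is to reduce the distality of $x$ to its IP$^*$-recurrence by invoking Theorem~\ref{thm:distal-ip-star-rec}, and then to establish IP$^*$-recurrence by transferring the IP$^*$-return of a nearby distal point to $x$ via pairwise IP$^*$-equicontinuity. So the target becomes: for every $\eps>0$, show that $N(x,B(x,\eps))$ is an IP$^*$-set.

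Fix $\eps>0$. First I would invoke the pairwise IP$^*$-equicontinuity at $x$ to obtain a $\delta>0$, which I may shrink to $\delta<\eps/3$, such that for any $y,z\in B(x,\delta)$ the set $\{g\in G:d(gy,gz)<\eps/3\}$ is an IP$^*$-set. Since $x$ is a limit of distal points, I can pick a distal point $x'\in B(x,\delta)$ with $x'\neq x$. By Theorem~\ref{thm:distal-ip-star-rec}, distality of $x'$ gives that $F_1:=N(x',B(x',\eps/3))$ is an IP$^*$-set, and the pairwise IP$^*$-equicontinuity, applied to the pair $(x,x')\in B(x,\delta)\times B(x,\delta)$, yields that $F_2:=\{g\in G:d(gx,gx')<\eps/3\}$ is also an IP$^*$-set. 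Using Proposition~\ref{prop:IP*-set}(1) (the intersection of two IP$^*$-sets is IP$^*$), the set $F_1\cap F_2$ is IP$^*$.

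The final step is the triangle inequality on $F_1\cap F_2$: for every $g$ in this intersection,
\[
d(gx,x)\leq d(gx,gx')+d(gx',x')+d(x',x)<\tfrac{\eps}{3}+\tfrac{\eps}{3}+\delta<\eps,
\]
so $F_1\cap F_2\subset N(x,B(x,\eps))$. Since every superset of an IP$^*$-set is an IP$^*$-set, $N(x,B(x,\eps))$ is an IP$^*$-set. As $\eps>0$ was arbitrary, $x$ is IP$^*$-recurrent, and a second application of Theorem~\ref{thm:distal-ip-star-rec} concludes that $x$ is a distal point.

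The only delicate point is the quantitative bookkeeping: one has to choose $\delta$ (given by the IP$^*$-equicontinuity at $x$) small enough that both the pair $(x,x')$ lies in the $\delta$-ball and the triangle inequality closes up inside $B(x,\eps)$; this is why I would shrink $\delta$ below $\eps/3$ and match the $\eps/3$ radii in $F_1$ and $F_2$. Everything else is a direct combination of the filter property of IP$^*$-sets and the characterization of distal points by IP$^*$-recurrence, so I do not expect any other obstacle.
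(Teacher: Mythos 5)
Your proof is correct and follows essentially the same route as the paper's: intersect the IP$^*$-set coming from the IP$^*$-recurrence of a nearby distal point (Theorem~\ref{thm:distal-ip-star-rec}) with the IP$^*$-set supplied by pairwise IP$^*$-equicontinuity, and close up with the triangle inequality to get IP$^*$-recurrence of $x$. The only difference is cosmetic bookkeeping (your $\eps/3$ radii versus the paper's $2\eps$ bound), which is immaterial since $\eps$ is arbitrary.
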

\begin{proof}
For any $\eps>0$ there exists a neighbourhood $U$ of $x$ such that for any $y\in U$, $\{g\in G\colon d(gx, gy)<\eps\}$ is an IP$^*$-set. 
Pick a distal point $z\in U\cap B(x,\eps)$. By Theorem~\ref{thm:distal-ip-star-rec}, $N(z,U\cap B(x,\eps))$ is an IP$^*$-set.
By Proposition~\ref{prop:IP*-set}, 
$F:=\{g\in G\colon d(gx, gz)<\eps\}\cap  N(z,U\cap B(x,\eps))$ is also an IP$^*$-set.  By the triangular inequality property of the metric $d$,
$F\subset \{g\in G\colon d(gx, x)<2\eps\}$. This implies that $x$ is IP$^*$-recurrent and then $x$ is a distal point.
\end{proof}

Following~\cite{Akin1997}, we say that a dynamical system $(X,G)$ is IP$^*$-central\footnote{It should be noticed that here ``central'' means the property of the return time set $N(U,U)$, which is quite different from the central set.} 
if for every nonempty open subset $U$ of $X$, the return time set $N(U,U)$ is an IP$^*$-set.
By Theorem~\ref{thm:distal-ip-star-rec}, we known that if a dynamical system has a dense set of distal points, then it is IP$^*$-central. 

\begin{lem}\label{lem:supp-IP=subset}
	A dynamical system $(X,G)$ is IP$^*$-central if and only if  for any IP-set $F$ in $G$ and nonempty open subset $U$ of $X$, there exist
	an IP-subset $F^{\prime}$ of $F$ and a point $z\in U$ such that $F^{\prime}\subset N(z,U)$.
\end{lem}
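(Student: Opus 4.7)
The plan is to handle the two implications separately; the reverse direction is formal and follows by contradiction, while the forward direction requires an inductive construction in the spirit of Hindman's theorem, alternating between passing to tails of the given IP-set and shrinking open neighbourhoods.

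For the direction ``$\Leftarrow$'', suppose the stated condition holds and fix any nonempty open $U \subseteq X$. To show that $N(U,U)$ is an IP$^*$-set I argue by contradiction: were some IP-set $F \subseteq G$ disjoint from $N(U,U)$, the hypothesis applied to $F$ and $U$ would produce an IP-subset $F' \subseteq F$ and a point $z \in U$ with $F' \subseteq N(z,U)$; since $z \in U$ forces $N(z,U) \subseteq N(U,U)$, we would get $F' \subseteq F \cap N(U,U) = \emptyset$, contradicting the fact that $F'$ is infinite.

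For the direction ``$\Rightarrow$'', fix an IP-set $F = FP(\{p_i\}_{i=1}^\infty) \subseteq G$ and a nonempty open $U \subseteq X$. I construct inductively finite nonempty sets $\alpha_1, \alpha_2, \ldots \subseteq \bbn$ with $\max\alpha_n < \min\alpha_{n+1}$, writing $q_n := \prod_{i\in\alpha_n} p_i$, together with a chain $U = W_0 \supseteq \overline{W_1} \supseteq W_1 \supseteq \overline{W_2} \supseteq \cdots$ of nonempty open subsets of $X$ satisfying $q_{n+1} W_{n+1} \subseteq W_n$. At the inductive step, given $W_n$ and $\alpha_1,\dots,\alpha_n$ with $m := \max\alpha_n$ (or $m := 0$ if $n=0$), the tail $FP(\{p_i\}_{i>m})$ is an IP-set, and by IP$^*$-centrality $N(W_n,W_n)$ is IP$^*$; Proposition~\ref{prop:IP*-set}(2) ensures that their intersection is still an IP-set, hence nonempty, so I may pick $\alpha_{n+1}\subseteq\{i : i>m\}$ with $q_{n+1}\in N(W_n,W_n)$, and then by normality of $X$ choose an open nonempty $W_{n+1}$ with $\overline{W_{n+1}}\subseteq W_n\cap q_{n+1}^{-1}W_n$.

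By compactness $\bigcap_{n\ge 0}\overline{W_n}$ is nonempty; pick $z$ in it, so in particular $z\in W_0=U$. Setting $F' := FP(\{q_n\}_{n=1}^\infty)$, the pairwise disjointness and sequential ordering of the $\alpha_n$ guarantee $F'\subseteq F$, so $F'$ is an IP-subset of $F$. To verify $F'\subseteq N(z,U)$, take any $\beta=\{i_1<\dots<i_s\}\subseteq\bbn$; a descending induction on $j$ from $s$ to $1$, using $q_{i_j}W_{i_j}\subseteq W_{i_j-1}$ and $W_{i_j-1}\subseteq W_{i_{j-1}}$ (valid because $i_{j-1}\le i_j-1$), yields $q_{i_1}q_{i_2}\cdots q_{i_s}z\in W_{i_1-1}\subseteq W_0=U$. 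The main obstacle is precisely the bookkeeping at the inductive step: $q_{n+1}$ must be drawn from the tail of the original IP-sequence, so that $F'$ remains an IP-subset of $F$, while simultaneously lying in $N(W_n,W_n)$, so that $W_{n+1}$ is nonempty. IP$^*$-centrality, combined with Proposition~\ref{prop:IP*-set}(2), is exactly what makes these two requirements compatible.
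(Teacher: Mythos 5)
Your proof is correct and follows essentially the same route as the paper's: the forward direction is the same nested-open-sets construction, drawing each $q_{n+1}$ from a tail of the IP-sequence via IP$^*$-centrality of $N(W_n,W_n)$ and finishing by compactness, and the backward direction is the paper's one-line argument recast as a contradiction. (One cosmetic point: in the backward direction the contradiction should be with $F'$ being \emph{nonempty} rather than infinite, since under the paper's definition an IP-set generated by a sequence with repetitions need not be an infinite set.)
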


\begin{proof}
    ($\Leftarrow$)
    Fix a nonempty open subset $U$ of $X$.  For any IP-set $F$ in $G$, there exist an IP-subset $F^{\prime}$ of $F$ and a point $z\in U$ such that $F^{\prime}\subset N(z,U)$.
    It is clear that $N(z,U)\subset N(U,U)$. 
    Then $F'\subset F\cap N(z,U)\subset F\cap N(U,U)$.
    Therefore, $N(U,U)$ is an IP$^*$-set and $(X,G)$ is IP$^*$-central.
    
	($\Rightarrow$) Assume that  $(X,G)$ is IP$^*$-central. Fix an IP-set $F$ in $G$ and a nonempty open subset $U$ of $X$.
	Pick a sequence $\{p_i\}_{i=1}^{\infty}$ in $G$ such that $FP\{p_i\}_{i=1}^{\infty} \subset F$.
	Take a nonempty open subset $V_0$ of $X$ such that $\overline{V_0}\subset U$.
	Since $(X,G)$ is IP$^*$-central, $N(V_0,V_0)$ is an IP$^*$-set, there is a finite subset $\alpha_1$ of $\bbn$ such that $V_0\cap p_{\alpha_1}^{-1}V_0\neq\emptyset$ where $p_{\alpha_1}=\prod_{j\in\alpha_1}p_j$.
	Take a nonempty open subset $V_1$ of $X$ such that $\overline{V_1}\subset V_0\cap p_{\alpha_1}^{-1}V_0$,
	there is a finite subset $\alpha_2$ of $\bbn$ with $\min{\alpha_2}>\max{\alpha_1}$
	and
	$V_1\cap p_{\alpha_2}^{-1}V_1\neq \emptyset$.
	By induction we get a sequence $\{\alpha_i \}$ of finite subsets of $\bbn$ and a sequence of nonempty open set $\{V_i\}$ which satisfy  that for any $i\in\bbn$,
	$\min{\alpha_{i+1}}>\max{\alpha_i}$ and
	$\overline{V_{i+1}}\subset V_i\cap p_{\alpha_{i+1}}^{-1}V_i$.
	By the compactness of $X$, take a point $z\in\bigcap_{i=1}^{\infty}\overline{V_i}$ and let $q_i=p_{\alpha_i}$ for $i\in\bbn$. It is easy to verify that $FP\{q_i\}_{i=1}^{\infty}\subset N(z,U)$.
\end{proof}

\begin{lem}\label{lem:measure-IP-star-central}
If a dynamical system $(X,G)$ admits an invariant measure with full support, then it is IP$^*$-central.
\end{lem}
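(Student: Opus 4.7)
The plan is to verify the definition of IP$^*$-central directly: given a nonempty open $U \subseteq X$ and an arbitrary IP-set $A \subseteq G$, produce an element of $N(U,U) \cap A$. Fix any sequence $\{p_i\}_{i=1}^\infty$ in $G$ with $FP(\{p_i\}_{i=1}^\infty) \subseteq A$. The point is that, instead of looking at the individual translates $p_i U$, I would look at the translates $q_n U$ where $q_n := p_1 p_2 \cdots p_n$ is the partial product. This ordering trick is what makes the argument work in the (possibly nonabelian) group setting, because for $i<j$ the ratio simplifies as
\[
q_i^{-1} q_j \;=\; p_{i+1} p_{i+2} \cdots p_j \;\in\; FP(\{p_i\}_{i=1}^\infty),
\]
a finite product in increasing order of indices.

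Let $\mu$ be the invariant measure with full support given by hypothesis, so $\mu(U)>0$. By $G$-invariance, $\mu(q_n U)=\mu(U)>0$ for all $n$. If the sets $\{q_n U\}_{n\geq 1}$ were pairwise disjoint, summing would give $\sum_{n\geq 1}\mu(q_n U)=\infty$, contradicting $\mu(X)=1$. So there exist indices $i<j$ with $q_i U \cap q_j U \neq \emptyset$, which upon multiplying by $q_i^{-1}$ yields $q_i^{-1} q_j U \cap U \neq \emptyset$, i.e.\ $q_i^{-1} q_j \in N(U,U)$. Combined with the identity above, this element lies in $FP(\{p_i\}_{i=1}^\infty)\subseteq A$, so $N(U,U)\cap A \neq \emptyset$, as required.

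Since $A$ was an arbitrary IP-set and $U$ an arbitrary nonempty open set, $N(U,U)$ is an IP$^*$-set for every such $U$, which is exactly the definition of being IP$^*$-central. There is no real obstacle here; the only subtlety is the choice of the partial products $q_n$ (rather than $p_n$), which is essential for the ratios $q_i^{-1}q_j$ to be genuine finite products of the $p_i$'s in increasing order, so that they lie in the given IP-set $A$. Everything else reduces to a single application of the pigeonhole principle against the invariant probability measure, i.e.\ a measure-theoretic Poincaré-type argument.
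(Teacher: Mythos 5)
Your proof is correct and follows essentially the same route as the paper's: form the partial products $q_n=p_1p_2\cdots p_n$, use the invariance of the probability measure and pigeonhole to find $i<j$ with $q_iU\cap q_jU\neq\emptyset$, and observe that $q_i^{-1}q_j=p_{i+1}\cdots p_j$ is a finite product in increasing order of indices and hence lies in the given IP-set. The only (cosmetic) difference is that you work with the forward images $q_nU$ whereas the paper works with the preimages $q_n^{-1}(U)$; your choice makes the identification of the returning element as a genuine element of $FP(\{p_i\}_{i=1}^\infty)\cap N(U,U)$ slightly more transparent in the nonabelian setting.
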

\begin{proof}
 Let $\mu$ be the invariant measure of $(X,G)$ with full support. 
 Fix an IP-set $F$ in $G$ and pick a sequence $\{p_i\}_{i=1}^{\infty}$ in $G$ such that $FP\{p_i\}_{i=1}^{\infty} \subset F$. 
 For any nonempty open subset $U$ of $X$, one has $\mu(U)>0$.
 For any $n\in\bbn$, let $q_n=\prod_{j\in\{1,2,\dots,n\}}p_j$.
 As $\mu$ is $G$-invariant, $\mu(q_n^{-1}(U))=\mu(U)$ for any $n\in\bbn$.
 Note that $\mu(X)=1$. So there exist two positive integers $n_1< n_2$ such that $\mu\bigl(q_{n_1}^{-1}(U) \cap q_{n_2}^{-1}(U) \bigr)>0$. Then $U\cap (\prod_{j\in\{n_1+1,n_1+2,\dots,n_2\}}p_j)^{-1}(U) \neq\emptyset$, and $N(U,U)$ is an IP$^*$-set. This implies that $(X,G)$ is IP$^*$-central.
\end{proof}

\begin{lem}\label{lem:IP-star-eq-point-distal}
	Let $(X,G)$ be an IP$^*$-central system.
	If $x\in X$ is pairwise IP$^*$-equicontinuous, then $x$ is distal.
\end{lem}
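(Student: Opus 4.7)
The plan is to show that $x$ is IP$^*$-recurrent, so that distality follows from Theorem~\ref{thm:distal-ip-star-rec}. Concretely, I want to prove that for every neighborhood $W$ of $x$ and every IP-set $F\subset G$, the intersection $F\cap N(x,W)$ is nonempty.

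First I would reduce as follows. Given a neighborhood $W$ of $x$, pick $\eps>0$ with $B(x,2\eps)\subset W$. Using the hypothesis that $x$ is pairwise IP$^*$-equicontinuous, choose a neighborhood $U$ of $x$ with $U\subset B(x,\eps)$ such that for every pair $y,z\in U$ the set $\{g\in G\colon d(gy,gz)<\eps\}$ is an IP$^*$-set.

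Next I would invoke the IP$^*$-centrality of the system, in the form provided by Lemma~\ref{lem:supp-IP=subset}: applied to the IP-set $F$ and the nonempty open set $U$, this yields an IP-subset $F'\subset F$ and a point $z\in U$ with $F'\subset N(z,U)$. Now apply pairwise IP$^*$-equicontinuity to the pair $x,z\in U$: the set
\[
E:=\{g\in G\colon d(gx,gz)<\eps\}
\]
is an IP$^*$-set. Since $F'$ is an IP-set, Proposition~\ref{prop:IP*-set} gives $F'\cap E\neq\emptyset$. For any $g\in F'\cap E$ we have $gz\in U\subset B(x,\eps)$ and $d(gx,gz)<\eps$, so by the triangle inequality $d(gx,x)<2\eps$, whence $gx\in W$. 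Thus $g\in F\cap N(x,W)$, as required.

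There is no serious obstacle here: the argument is essentially a bookkeeping of the two hypotheses combined with the stability of IP$^*$-sets under intersection with IP-sets. The only mild subtlety is producing, for a prescribed IP-set $F$, a point $z$ arbitrarily close to $x$ whose return times to a neighborhood of $x$ form an IP-subset of $F$; this is exactly the content of Lemma~\ref{lem:supp-IP=subset}, and it is the step that makes IP$^*$-centrality indispensable (without it, one cannot couple the ``pairwise'' control at $x,z$ to actual recurrence of $x$ itself).
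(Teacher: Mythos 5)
Your proof is correct and uses exactly the same ingredients as the paper's: Lemma~\ref{lem:supp-IP=subset} to produce a point $z$ near $x$ whose return times to $U$ contain an IP-subset of the prescribed IP-set, pairwise IP$^*$-equicontinuity applied to the pair $x,z$, the stability of IP$^*$-sets under intersection with IP-sets, and Theorem~\ref{thm:distal-ip-star-rec}. The only difference is presentational: you argue directly that $x$ is IP$^*$-recurrent, while the paper runs the contrapositive (assuming $x$ is not IP$^*$-recurrent and deriving a contradiction with pairwise IP$^*$-equicontinuity), so the two arguments are logically the same.
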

\begin{proof}
	Assume that $x$ is not distal. By Theorem~\ref{thm:distal-ip-star-rec} it is not IP$^*$-recurrent, 
    that is, there exists a $\delta>0$ such that $N(x,X\setminus B(x,2\delta))$ is an IP-set.
	As $x$ is pairwise  IP$^*$-equicontinuous, there exists a neighborhood $U$ of $x$
	such that for any $z\in U$,
	$\{g\in G\colon d(g x,g z)<\delta\}$ is an IP$^*$-set.
	However, as $(X,G)$ is IP$^*$-central,  by Lemma~\ref{lem:supp-IP=subset}
	there exists $v\in U\cap B(x,\delta)$ such that $N(v, U\cap B(x,\delta))$
	contains an IP-subset of $N(x,X\setminus B(x,2\delta))$.
	For any $g\in N(v, U\cap B(x,\delta))$,
	$d(g v,x)<\delta$ and $d(g x, x)>2\delta$.
	Thus $d(g v,g x)>\delta$ for all $g$ in the IP-subset of $N(v, U\cap B(x,\delta))$, which contradicts to pairwise IP$^*$-equicontinuity of $x$.
\end{proof}

Combining Lemmas~\ref{lem:distal-system-IP-star-eq} and \ref{lem:IP-star-eq-point-distal}, we have the following result.

\begin{thm}\label{thm:ip-star-distal}
	An IP$^*$-central system is distal if and only if it is   pairwise IP$^*$-equicontinuous.
\end{thm}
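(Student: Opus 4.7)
The theorem is essentially the packaging of the two preceding lemmas, so my plan is simply to read off each direction from what has already been established and check that the hypothesis of IP$^*$-centrality is invoked only where strictly needed.

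For the forward implication, I would argue as follows. Assume $(X,G)$ is distal. Then Lemma~\ref{lem:distal-system-IP-star-eq} immediately gives that $(X,G)$ is pairwise IP$^*$-equicontinuous, with no further appeal to IP$^*$-centrality. (It is worth flagging in the write-up that this direction actually holds for any distal system, irrespective of IP$^*$-centrality.)

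For the reverse implication, assume $(X,G)$ is IP$^*$-central and pairwise IP$^*$-equicontinuous. By definition, every point $x\in X$ is a pairwise IP$^*$-equicontinuous point. Lemma~\ref{lem:IP-star-eq-point-distal} then applies to each such $x$ in the IP$^*$-central system $(X,G)$, yielding that $x$ is a distal point. Since this holds for every $x\in X$, the system $(X,G)$ is distal.

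Combining these two directions gives the theorem. There is no substantial obstacle to overcome here: the work has been done in Lemmas~\ref{lem:distal-system-IP-star-eq} and~\ref{lem:IP-star-eq-point-distal} (whose proofs in turn rely on Theorem~\ref{thm:distal-ip-star-rec} to convert between distality of points and IP$^*$-recurrence, and on Lemma~\ref{lem:supp-IP=subset} to extract IP-subsets inside small open sets from IP$^*$-centrality). The only thing to be careful about in the write-up is to note explicitly that the hypothesis ``every point is pairwise IP$^*$-equicontinuous'' is exactly what allows the pointwise application of Lemma~\ref{lem:IP-star-eq-point-distal}, and to record that IP$^*$-centrality is used only in this backward direction.
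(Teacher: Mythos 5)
Your proposal is correct and is exactly the paper's argument: the paper proves this theorem by combining Lemma~\ref{lem:distal-system-IP-star-eq} for the forward direction and Lemma~\ref{lem:IP-star-eq-point-distal} (applied pointwise, using that distality of a system is equivalent to every point being distal) for the converse. Your additional remarks on where IP$^*$-centrality is actually used are accurate but do not change the route.
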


Now we are ready to prove Theorem~\ref{thm:main-resut1}~(1).

\begin{proof}[Proof of Theorem~\ref{thm:main-resut1}~(1)]
Let $\mu$ be an invariant measure of $(X,G)$.
As $(X,G)$ is minimal, $\mu$ has full support.
By Lemma~\ref{lem:measure-IP-star-central}, $(X,G)$ is IP$^*$-central. Then the result follows from Theorem~\ref{thm:ip-star-distal}.
\end{proof}

\begin{rem}\label{rem:true-minimal-invariant}
 Similar to Proof of Theorem~\ref{thm:main-resut1}~$(1)$, it is easy to see that the conclusions of Lemma~\ref{lem:supp-IP=subset} and Lemma~\ref{lem:IP-star-eq-point-distal} are true for any minimal system which admits an invariant measure.
\end{rem}

\subsection{\texorpdfstring{Almost pairwise IP$^*$-equicontinuity}{Almost pairwise IP*-equicontinuity}}\label{subsection:almost-ip-star}
Let $(X,G)$ be a dynamical system. 
Denote by $\eq^{\ip^*}(X,G)$ the collection of all pairwise IP$^*$-equicontinuous points in $X$.
We say that a dynamical system $(X,G)$ is \emph{almost pairwise IP$^*$-equicontinuous}
if $\eq^{\ip^*}(X,G)$ is residual in $X$.

\begin{lem}\label{lem:IP-star-equi-points}
	Let $(X,G)$ be a dynamical system. Then $\eq^{\ip^*}(X,G)$ is a $G$-invariant $G_\delta$ subset of $X$.
\end{lem}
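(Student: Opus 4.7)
The plan is to verify the two properties separately.

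For the $G_\delta$ part, I would define, for each positive integer $n$,
\[
E_n := \bigl\{x \in X \colon \exists\ \text{open}\ U \ni x\ \text{with}\ \forall y, z \in U,\ \{g \in G \colon d(gy, gz) < 1/n\}\ \text{is IP}^*\bigr\}.
\]
Each $E_n$ is open by construction (the witness $U$ for $x$ also works for every $x' \in U$), and since a point is pairwise IP$^*$-equicontinuous if and only if it lies in every $E_n$, the equality $\eq^{\ip^*}(X, G) = \bigcap_{n \geq 1} E_n$ yields the $G_\delta$ property.

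For $G$-invariance, fix $x \in \eq^{\ip^*}(X, G)$ and $g_0 \in G$; I will show $g_0 x \in \eq^{\ip^*}(X, G)$. Given $\eps > 0$, use uniform continuity of $g_0$ on the compact space $X$ to find $\eps_1 > 0$ such that $d(u, v) < \eps_1$ implies $d(g_0 u, g_0 v) < \eps$. Take a neighborhood $U$ of $x$ witnessing pairwise IP$^*$-equicontinuity at level $\eps_1$, and set $V := g_0 U$, a neighborhood of $g_0 x$. Any $y', z' \in V$ can be written $y' = g_0 y$, $z' = g_0 z$ with $y, z \in U$, and by the choice of $U$ the set $A := \{k \in G \colon d(ky, kz) < \eps_1\}$ is IP$^*$. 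For each $k \in A$, uniform continuity gives $d(g_0 k y, g_0 k z) < \eps$, and since $g_0 k y = (g_0 k g_0^{-1}) y'$ (and similarly for $z$), this reads $d((g_0 k g_0^{-1}) y', (g_0 k g_0^{-1}) z') < \eps$. Therefore
\[
\{h \in G \colon d(hy', hz') < \eps\} \supseteq g_0 A g_0^{-1}.
\]

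The crucial combinatorial input, where I expect the main difficulty to lie, is that conjugation preserves the IP$^*$ property---by contrast with right translation, which need not preserve IP-sets in non-abelian groups. To see this, note that if $F \supseteq FP(\{p_i\}_{i=1}^{\infty})$ is IP, setting $q_i := g_0^{-1} p_i g_0$ gives $\prod_{i \in \alpha} q_i = g_0^{-1}\bigl(\prod_{i \in \alpha} p_i\bigr) g_0 \in g_0^{-1} F g_0$, so $g_0^{-1} F g_0$ is IP. Dually, if $A$ is IP$^*$, every IP-set $F$ meets $A$ via its conjugate $g_0^{-1} F g_0$, which gives $F \cap g_0 A g_0^{-1} \neq \emptyset$; hence $g_0 A g_0^{-1}$ is IP$^*$. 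Since IP$^*$ is upward closed, $\{h \in G \colon d(hy', hz') < \eps\}$ is IP$^*$, completing the proof of $G$-invariance.
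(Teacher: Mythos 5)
Your proposal is correct and follows essentially the same route as the paper: the same decomposition of $\eq^{\ip^*}(X,G)$ into a countable intersection of open sets $E_n$, and the same invariance argument via uniform continuity of the single map $g_0$ together with the inclusion $\{h\in G\colon d(hy',hz')<\eps\}\supseteq g_0 A g_0^{-1}$. The only difference is that you spell out the verification that conjugation preserves IP-sets and hence IP$^*$-sets, a step the paper uses implicitly; that is a worthwhile detail to record but not a different proof.
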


\begin{proof}
	For each $m\in\bbn$, denote by $\eq^{\ip^*}_m(X,G)$ the collection of all points $x$ in $X$ with the property that there exists a neighbourhood $U$ of $x$ such that for any $y,z\in U$,
	$\bigl \{g\in G\colon  d(g y,g z)<\frac{1}{m}\bigr\}$ is an IP$^*$-set.
	Clearly, $\{\eq^{\ip^*}_m(X,G)\}_{m=1}^\infty$ is a decreasing sequence of open subsets of $X$,
	and $\eq^{\ip^*}(X,G)=\bigcap_{m=1}^{\infty} \eq^{\ip^*}_m(X,G)$.
	Then $\eq^{\ip^*}(X,G)$ is a $G_\delta$ subset of $X$.
	
	Fix $h\in G$. For any $m\in\bbn$ there exists $n\in \bbn$ such that
	for any $u,v\in X$ with $d(u,v)<\frac{1}{n}$ one has
	$d(hu,hv)<\frac{1}{m}$.
	Assume that $x\in \eq^{\ip^*}_n(X,G)$, 
    that is there exists a neighbourhood $U$ of $x$ such that for any $y^{\prime},z^{\prime}\in U$,
	$\bigl \{g\in G \colon  d(g y^{\prime},g z^{\prime})<\frac{1}{n}\bigr\}$ is an IP$^*$-set.
	Let $V= hU$. Then $V$ is a neighborhood of $hx$.
	For any $y,z\in V$, $h^{-1}y,h^{-1}z\in U$.
	Then $\bigl \{g\in G\colon  d(g (h^{-1}y),g(h^{-1}z))<\frac{1}{n}\bigr\}$ is an IP$^*$-set.
	By the choice of $n$,
	$\bigl \{g \in G\colon  d(h g h^{-1}y,h g h^{-1}z)<\frac{1}{m}\bigr\}$ is also an IP$^*$-set.
    Note that $\bigl \{g \in G\colon  d( g y, gz)<\frac{1}{m}\bigr\} \supset h \bigl \{g \in G\colon  d(h g h^{-1}y,h g h^{-1}z)<\frac{1}{m}\bigr\}h^{-1}$, then
	$\bigl \{g \in G\colon  d( gy,gz)<\frac{1}{m}\bigr\}$ is also an IP$^*$-set.
	This implies that $h\eq^{\ip^*}_n(X,G)\subset  \eq^{\ip^*}_m(X,G)$, and then $h\eq^{\ip^*}(X,G)\subset \eq^{\ip^*}(X,G)$.
    By changing $h$ by $h^{-1}$, we have
    $h^{-1}(\eq^{\ip^*}(X,G))\subset \eq^{\ip^*}(X,G)$, 
    then we have $h(\eq^{\ip^*}(X,G))= \eq^{\ip^*}(X,G)$.
\end{proof}

\begin{prop}\label{prop:minimal-ip*-eq-points}
If $(X,G)$ is a minimal system, then either $\eq^{\ip^*}(X,G)$ is residual in $X$ or $\eq^{\ip^*}(X,G)$ is empty.
\end{prop}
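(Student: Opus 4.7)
The plan is to apply Lemma~\ref{lem:IP-star-equi-points} together with minimality to get the standard zero-one style dichotomy for $G$-invariant $G_\delta$ sets in a minimal system.

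First I would recall from Lemma~\ref{lem:IP-star-equi-points} that $\eq^{\ip^*}(X,G)$ is a $G$-invariant $G_\delta$ subset of $X$. The key observation is that in a minimal system, any nonempty $G$-invariant set is dense: if $x \in \eq^{\ip^*}(X,G)$, then $Gx \subseteq \eq^{\ip^*}(X,G)$ by $G$-invariance, and $\overline{Gx} = X$ by minimality, so $\eq^{\ip^*}(X,G)$ is dense in $X$.

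Next, I would combine density with the $G_\delta$ property: a dense $G_\delta$ subset of a compact metric space is residual by Baire's category theorem. So if $\eq^{\ip^*}(X,G)$ is nonempty, it is residual. Otherwise, it is empty, giving the dichotomy.

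I do not foresee a real obstacle here; essentially all the work has already been done in Lemma~\ref{lem:IP-star-equi-points}. The only thing worth writing cleanly is the two-line argument that a nonempty $G$-invariant subset of a minimal system is dense, followed by the invocation of Baire category. So the full write-up should be short — one short paragraph suffices.
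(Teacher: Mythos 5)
Your proposal is correct and follows essentially the same route as the paper: invoke Lemma~\ref{lem:IP-star-equi-points} for the $G$-invariant $G_\delta$ property, use minimality to get density of a nonempty invariant set, and conclude residuality. (The only cosmetic difference is that the paper's notion of residual is ``contains a dense $G_\delta$,'' so the appeal to Baire category is not even needed.)
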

\begin{proof}
Assume that $\eq^{\ip^*}(X,G)$ is not empty. Pick $x\in \eq^{\ip^*}(X,G)$.
By Lemma~\ref{lem:IP-star-equi-points} $\eq^{\ip^*}(X,G)$ is $G$-invariant, $Gx\subset \eq^{\ip^*}(X,G)$. As $(X,G)$ is minimal, $Gx$ is dense in $X$ and then $\eq^{\ip^*}(X,G)$
is also dense in $X$. By Lemma~\ref{lem:IP-star-equi-points} again, $\eq^{\ip^*}(X,G)$ is a $G_\delta$-subset of $X$. Hence $\eq^{\ip^*}(X,G)$ is residual in $X$.
\end{proof}

\begin{lem}\label{lem:distal-almost-1-1}
	Let $\pi\colon (X,G)\to (Y,G)$ be a factor map.
	If  $x\in X$ with $\pi^{-1}(\pi(x))=\{x\}$  and $(Y,G)$ is distal,
	then $x$ is pairwise IP$^*$-equicontinuous.
\end{lem}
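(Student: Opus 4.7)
The plan is to exploit the fact that $\pi^{-1}(\pi(x))=\{x\}$ together with compactness of $X$ gives a one-sided continuity: small neighborhoods of $\pi(x)$ in $Y$ pull back to small neighborhoods of $x$ in $X$. Combined with the pairwise IP$^*$-equicontinuity that $(Y,G)$ automatically has (by Lemma~\ref{lem:distal-system-IP-star-eq}, since it is distal), this will let us certify a suitable IP$^*$-set of ``good'' group elements for every pair of points near $x$.

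Here are the steps I would carry out. First, fix $\varepsilon>0$. Using compactness and $\pi^{-1}(\pi(x))=\{x\}$, pick $\delta>0$ small enough that $\pi^{-1}\bigl(\overline{B_Y(\pi(x),\delta)}\bigr)\subset B_X(x,\varepsilon/2)$. Next, by Lemma~\ref{lem:distal-system-IP-star-eq}, $(Y,G)$ is pairwise IP$^*$-equicontinuous, so there exists $\eta_1>0$ such that for any $y_1,y_2\in Y$ with $d_Y(y_1,y_2)<\eta_1$ the set $\{g\in G: d_Y(gy_1,gy_2)<\delta/2\}$ is an IP$^*$-set. By continuity of $\pi$, choose $\eta>0$ so that $\pi(B_X(x,\eta))\subset B_Y(\pi(x),\eta_1)$, and set $U=B_X(x,\eta)$.

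Now for any $y,z\in U$, we have $d_Y(\pi(y),\pi(x))<\eta_1$ and $d_Y(\pi(z),\pi(x))<\eta_1$. Therefore the sets
\[
A_y=\{g\in G: d_Y(g\pi(y),g\pi(x))<\delta/2\},\quad
A_z=\{g\in G: d_Y(g\pi(z),g\pi(x))<\delta/2\}
\]
are IP$^*$-sets. Since $(Y,G)$ is distal, $\pi(x)$ is a distal point of $Y$, hence by Theorem~\ref{thm:distal-ip-star-rec} it is IP$^*$-recurrent, so
\[
A_x=\{g\in G: d_Y(g\pi(x),\pi(x))<\delta/2\}
\]
is also an IP$^*$-set. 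By Proposition~\ref{prop:IP*-set}(1), the intersection $F=A_y\cap A_z\cap A_x$ is an IP$^*$-set.

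For any $g\in F$, the triangle inequality gives $d_Y(g\pi(y),\pi(x))<\delta$ and $d_Y(g\pi(z),\pi(x))<\delta$, so $gy$ and $gz$ both lie in $\pi^{-1}\bigl(\overline{B_Y(\pi(x),\delta)}\bigr)\subset B_X(x,\varepsilon/2)$, and consequently $d(gy,gz)<\varepsilon$. Thus $F\subset\{g\in G: d(gy,gz)<\varepsilon\}$, showing the latter is an IP$^*$-set and hence that $x$ is pairwise IP$^*$-equicontinuous. There is no serious obstacle here; the only subtle step is the initial pullback of a small neighborhood of $\pi(x)$, which relies essentially on the singleton fiber hypothesis and is the reason the conclusion cannot be strengthened to arbitrary $x\in X$.
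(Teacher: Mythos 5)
Your proof is correct and rests on the same two pillars as the paper's own argument: the singleton fiber $\pi^{-1}(\pi(x))=\{x\}$ together with compactness lets you pull a small neighborhood of $\pi(x)$ back inside $B(x,\varepsilon/2)$, and distality of $(Y,G)$, via IP$^*$-recurrence (Theorem~\ref{thm:distal-ip-star-rec}) and Proposition~\ref{prop:IP*-set}(1), supplies the IP$^*$-set of good times. The paper is marginally more direct --- for $u,v\in U$ it simply intersects the two return-time sets $N(\pi(u),V)$ and $N(\pi(v),V)$ for the pulled-back neighborhood $V$, avoiding your detour through Lemma~\ref{lem:distal-system-IP-star-eq} and the third set $A_x$ --- but this is a cosmetic difference rather than a genuinely different route.
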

\begin{proof}
	Fix $\eps>0$. As $\pi^{-1}(\pi(x))=\{x\}$, there exists a neighborhood $V$ of $\pi(x)$
	such that $\pi^{-1}(V)\subset B(x,\frac{\eps}{2})$.
	Pick a neighborhood $U$ of $x$ with $\pi(U)\subset V$.
	For any $u,v\in U$, $\pi(u),\pi(v)\in V$.
	As $\pi(u)$ and $\pi(v)$ are distal,
	$F:=\{g\in G\colon g \pi(u), g \pi(v)\in V\}$
	is an IP$^*$-set.
	For any $g\in F$, $g u,g v\in \pi^{-1}(V) \subset B(x,\frac{\eps}{2})$,
	and then $d(g u,g v)<\eps$.
	This implies that $x$ is pairwise IP$^*$-equicontinuous.
\end{proof}

Now we consider the opposite of pairwise IP$^*$-equicontinuity. 
A dynamical system $(X,G)$ is called \emph{pairwise IP-sensitive} 
if there exists a constant $\delta>0$ with the property that for each nonempty open subset $U$ of $X$,
there exist $x_1,x_2\in U$ such that
$\bigl\{g\in G \colon d(g x_1,g x_2)>\delta\bigr\}$
is an IP-set. We have the following dichotomy result for minimal systems. Note that the proof is different from the case of $\mathbb{N}$-actions which is proved in \cite[Theorem~3.10]{LY21}.

\begin{thm}\label{thm:dict-IP-star-eq}
    Every minimal system is either almost pairwise IP$^*$-equicontinuous or pairwise IP-sensitive.
\end{thm}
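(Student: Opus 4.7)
The plan is to prove the nontrivial direction by contrapositive: assuming $(X,G)$ is minimal and \emph{not} pairwise IP-sensitive, I will show that $\eq^{\ip^*}(X,G)$ is nonempty, so by Proposition~\ref{prop:minimal-ip*-eq-points} it is in fact residual and $(X,G)$ is almost pairwise IP$^*$-equicontinuous. Writing $A_m := \eq^{\ip^*}_m(X,G)$ for the open sets introduced in the proof of Lemma~\ref{lem:IP-star-equi-points}, one has $A_{m+1}\subseteq A_m$ and $\eq^{\ip^*}(X,G) = \bigcap_m A_m$. I will prove that under the hypothesis each $A_m$ is also dense; since $X$ is a compact metric space (hence Baire), $\bigcap_m A_m$ is then a dense $G_\delta$, hence residual.

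Nonemptiness of $A_m$ is straightforward. By Theorem~\ref{thm:hindman}, a subset of $G$ is IP$^*$ iff its complement is not IP. So the hypothesis `not pairwise IP-sensitive' supplies, for each $m$, a nonempty open $U\subseteq X$ such that $\{g\in G : d(gy,gz)\leq 1/(2m)\}$ is IP$^*$ for all $y,z\in U$. Because this set is contained in $\{g : d(gy,gz)< 1/m\}$, the latter is IP$^*$ as well, so $U\subseteq A_m$.

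For density of $A_m$, suppose toward a contradiction that $W:= X\setminus\overline{A_m}$ is a nonempty open set. By minimality of $(X,G)$ and compactness of $X$, a finite $F\subseteq G$ satisfies $X = \bigcup_{g\in F} gW$. The proof of Lemma~\ref{lem:IP-star-equi-points} provides, for each $g\in F$, an integer $n_g$ with $g^{-1}A_{n_g}\subseteq A_m$ (apply the uniform-continuity choice of the lemma to $h=g^{-1}$ and the given $m$). Put $N := \max_{g\in F} n_g$. By the first step, $A_N\neq\emptyset$; pick $x_0\in A_N$. Then $x_0\in gW$ for some $g\in F$, giving $g^{-1}x_0\in W$. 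On the other hand, monotonicity of $(A_m)$ yields $x_0\in A_N\subseteq A_{n_g}$, so $g^{-1}x_0\in g^{-1}A_{n_g}\subseteq A_m$, contradicting $g^{-1}x_0\in W = X\setminus\overline{A_m}$.

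The main difficulty is arranging a single index $N$ that handles every group element needed to cover $W$. Lemma~\ref{lem:IP-star-equi-points} only produces a $g$-dependent threshold $n_g$ (arising from the uniform continuity of $g$ on $X$), and in general the $n_g$ need not be bounded as $g$ ranges over $G$. The role of minimality is to reduce the $G$-invariant cover of $W$ to a \emph{finite} one, after which taking the maximum of the thresholds is allowed. Notably, this route uses only the conjugation-invariance of IP$^*$-sets already implicit in Lemma~\ref{lem:IP-star-equi-points}, and does not require any right-translation invariance of IP$^*$, which in general fails for non-abelian $G$.
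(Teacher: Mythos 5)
Your argument is correct and is essentially the contrapositive of the paper's proof: both rest on the Baire category theorem, the finite cover of $X$ by translates of a nonempty open set supplied by minimality, and the uniform-continuity/conjugation-invariance computation already carried out in the proof of Lemma~\ref{lem:IP-star-equi-points}. The only cosmetic difference is that you show each open set $\eq^{\ip^*}_m(X,G)$ is dense and intersect them, whereas the paper assumes $\eq^{\ip^*}(X,G)=\emptyset$, finds a closed set of $\tfrac{1}{m}$-IP-sensitive points with nonempty interior, and propagates an explicit sensitivity constant.
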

\begin{proof}
 Let $(X,G)$ be a minimal system. 
 First we assume that 	$\eq^{\ip^*}(X,G)\not=\emptyset$, then by Proposition~\ref{prop:minimal-ip*-eq-points} $\eq^{\ip^*}(X,G)$ is residual, that is, $(X,G)$ is almost pairwise IP$^*$-equicontinuous.
 
Now we assume that  $\eq^{\ip^*}(X,G)=\emptyset$, then every point in $X$ is not pairwise IP$^*$-equicontinuous.
For each $m\in\bbn$, denote by $A_m(X,G)$ the collection of all points $x$ in $X$ with the property that for each neighborhood $U$ of $x$,
there exist $x_1,x_2\in U$ such that
$\bigl\{g\in G \colon d(g x_1,g x_2)>\frac{1}{m}\bigr\}$
is an IP-set.  
It is clear that each $A_m(X,G)$ is a closed subset of $X$ and $\bigcup_{m\in\mathbb{N}} A_m(X,G)$ is the collection of all points in $X$ which are not  pairwise IP$^*$-equicontinuous. Then $\bigcup_{m\in\mathbb{N}} A_m(X,G)=X$.
By Baire category theorem, there exists $m_0\in\bbn$ such that the interior of $A_{m_0}(X,G)$ is not empty. Pick a nonempty open subset $V$ of $A_{m_0}(X,G)$.
As $(X,G)$ is minimal, there exists a finite subset $H$ of $G$ such that $\bigcup_{h\in H} hV=X$.
By the continuous of the action $G$, there exists a $\delta>0$ such that 
for any $u,v\in X$ with $d(u,v)>\frac{1}{m_0}$ one has $d(hu,hv)>\delta$ for each $h\in H$.

For every nonempty open subset $U$ of $X$, there exists  $h\in H$ such that $U\cap hV\neq\emptyset$. Then $V\cap h^{-1}U$ is a nonempty open subset of $A_{m_0}(X,G)$, and there exist $x_1',x_2' \in V\cap h^{-1}U$ such that
$\bigl\{g\in G \colon d(g x_1',g x_2')>\frac{1}{m_0}\bigr\}$
is an IP-set.  
Let $x_1=hx_1'$ and $x_2=hx_2'$. Then $x_1,x_2\in U$ and 
$\bigl\{g\in G \colon d(gh^{-1} x_1,gh^{-1} x_2)>\frac{1}{m_0}\bigr\}$
is an IP-set.  
By the choice of $\delta$, 
$\bigl\{g\in G \colon d(hgh^{-1} x_1, hgh^{-1} x_2)>\delta\bigr\}$
is an also IP-set. 
 Note that $ \bigl \{g \in G\colon d(g x_1, g x_2)>\delta\bigr\} \supset h\bigl\{g\in G \colon d(hgh^{-1} x_1, hgh^{-1} x_2)>\delta\bigr\} h^{-1}$, then
	$\bigl \{g \in G\colon d(g x_1, g x_2)>\delta \bigr\}$ is also an IP-set.
	This shows that $(X,G)$ is pairwise IP-sensitive. 
\end{proof}

\begin{prop}\label{prop:P-non-closed-IP-sen} 
Let $(X,G)$ be a minimal system which admits an invariant measure.
If the proximal relation $P(X,G)$ is not closed,  
then $(X,G)$ is pairwise IP-sensitive.
\end{prop}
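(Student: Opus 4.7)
My plan is to prove the proposition by contrapositive via the dichotomy of Theorem~\ref{thm:dict-IP-star-eq}. Since every minimal system is either pairwise IP-sensitive or almost pairwise IP$^*$-equicontinuous, it suffices to show that $(X,G)$ cannot be almost pairwise IP$^*$-equicontinuous under our hypotheses---equivalently, that almost pairwise IP$^*$-equicontinuity of a minimal system with an invariant measure forces $P(X,G)$ to be closed.

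Assume for contradiction that $(X,G)$ is almost pairwise IP$^*$-equicontinuous. Then Proposition~\ref{prop:minimal-ip*-eq-points} gives that $\eq^{\ip^*}(X,G)$ is residual in $X$, and by Remark~\ref{rem:true-minimal-invariant} (which upgrades Lemma~\ref{lem:IP-star-eq-point-distal} using the invariant-measure hypothesis) each such point is distal. To derive that $P(X,G)$ is closed, take $(x_n,y_n) \in P(X,G)$ with $(x_n,y_n) \to (x,y)$; the goal is to show $(x,y) \in P(X,G)$, and only the case $x \neq y$ is non-trivial. For each $n$, the proximality of $(x_n,y_n)$ in the minimal system $(X,G)$ implies that the joint limit set $\{z \in X : \exists g_k \in G,\ g_k x_n \to z \text{ and } g_k y_n \to z\}$ is non-empty, closed, and $G$-invariant, hence equals $X$ by minimality. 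Thus, fixing $z_0 \in \eq^{\ip^*}(X,G)$, one can find $g_n \in G$ such that both $g_n x_n$ and $g_n y_n$ lie in a prescribed small neighborhood $V$ of $z_0$ on which pairwise IP$^*$-equicontinuity holds with tolerance $\eps$. Pairwise IP$^*$-equicontinuity at $z_0$ then gives that $\{g \in G : d(g\, g_n x_n, g\, g_n y_n) < \eps\}$ is an IP$^*$-set. A conjugation argument in the style of Lemma~\ref{lem:IP-star-equi-points}---using that inner automorphisms of $G$ send IP-sets to IP-sets and hence IP$^*$-sets to IP$^*$-sets---converts this into an IP$^*$-recurrence property for the original pair $(x_n,y_n)$. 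Passing to the limit in $n$ via the finite-intersection property of IP$^*$-sets (Proposition~\ref{prop:IP*-set}) would yield $(x,y) \in P(X,G)$, contradicting the assumption $(x,y) \notin P(X,G)$.

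The main obstacle is the transfer step, since in a non-abelian $G$ the IP$^*$ property is invariant under conjugation but not under arbitrary one-sided translation. This forces one to emulate the conjugation manipulation used in the proof of Lemma~\ref{lem:IP-star-equi-points}, carefully leveraging continuity of the $G$-action to keep the constants bounded. A secondary subtlety is the diagonal limit in $n$, since the auxiliary elements $g_n$ depend on $n$; handling this requires combining per-$n$ IP$^*$-estimates with the convergence $(x_n,y_n) \to (x,y)$ in a compatible way, using the stability of IP$^*$-sets under finite intersections.
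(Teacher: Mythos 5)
Your strategy is genuinely different from the paper's: the paper argues directly, while you argue by contrapositive through the dichotomy of Theorem~\ref{thm:dict-IP-star-eq}, reducing everything to the claim that almost pairwise IP$^*$-equicontinuity (plus an invariant measure) forces $P(X,G)$ to be closed. That reduction is legitimate in principle, but note that this claim is precisely what the paper later extracts \emph{from} this proposition in Theorem~\ref{thm:stru-almost-IP*-eq}, (1)$\Rightarrow$(2), so you must prove it from scratch --- and your sketch of it has two genuine gaps.

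First, the transfer step does not go through. The set $\{g\in G\colon d(g\,g_nx_n,\,g\,g_ny_n)<\eps\}$ equals $\{h\in G\colon d(hx_n,hy_n)<\eps\}\,g_n^{-1}$, a one-sided translate, and translates of IP$^*$-sets need not be IP$^*$ even for $G=\mathbb{Z}$ (the even integers are IP$^*$, their translate the odd integers is not, since it misses $FP(\{2^i\}_{i\ge 1})$). The conjugation device of Lemma~\ref{lem:IP-star-equi-points} does not rescue you here: to write $\{g\colon d(gx_n,gy_n)<\eps'\}\supset g_n^{-1}\{g\colon d(g\,g_nx_n,g\,g_ny_n)<\eta\}g_n$ you need $\eta$ chosen according to the modulus of continuity of $g_n^{-1}$, but $g_n$ can only be produced \emph{after} the neighbourhood $V$ of $z_0$ (hence after $\eta$) has been fixed. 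This circular dependency between $g_n$ and $\eta$ is exactly the ``main obstacle'' you flag, and it is not resolved by your sketch. Second, the limit in $n$ fails: the finite-intersection property only yields, for each $N$, some $g_N$ with $d(g_Nx_n,g_Ny_n)<\eps$ for $n\le N$; to conclude $d(g_Nx,g_Ny)\le 3\eps$ you need $d(g_Nx,g_Nx_n)$ small, which by continuity of $g_N$ forces $n$ large \emph{depending on} $g_N$, and a single $g$ in the full intersection $\bigcap_n E_n$ is not available (countable intersections of IP$^*$-sets can be empty). So no witness of proximality of $(x,y)$ is produced. A telling symptom is that the invariant-measure hypothesis plays no real role in your argument beyond declaring the points of $\eq^{\ip^*}(X,G)$ distal (a fact you then never use), whereas in the paper it is the crux: the paper takes a distal pair $(y,z)$ that is a limit of proximal pairs, puts $\delta=\frac14\inf_{g}d(gy,gz)$, moves a proximal pair into an arbitrary open set $U$ by minimality, observes that the return times of that pair into a neighbourhood of its (minimal) second coordinate form a central set and hence contain an IP-set, and then invokes Lemma~\ref{lem:supp-IP=subset} with Remark~\ref{rem:true-minimal-invariant} --- the step where the measure is essential --- to find a companion point of $U$ returning to $U$ along a sub-IP-set, giving separation $>\delta$ along an IP-set. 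You would need some substitute for that mechanism to close your argument.
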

\begin{proof}
	Since $P(X,G)$ is not closed,  there exists a distal pair $(y,z)$ and
	proximal pairs $(y_i,z_i)$ such that $(y_i,z_i)\rightarrow (y,z)$ as $i\to\infty$.
	Let $\delta:=\frac{1}{4}\inf_{g\in G}d(gy,gz)>0$.
	Fix a nonempty open subset $U$ of $X$.
	As $(X,G)$ is minimal, there exists $h\in G$ such that $hy\in U$.
	There exists $n\in \bbn$ such that $hy_n\in U\cap B(hy,\delta)$ and $d(hz_n,hz)<\delta$.
	Let $x_1=hy_n$ and $x_2=hz_n$.
	Then $x_1\in U$, $d(x_1,x_2)>\delta$ and $(x_1,x_2)$ is proximal.
	Choose nonempty open set $U_i$ containing $x_i$ such that $d(U_1,U_2)>\delta$ and $U_1\subset U$.
	As $x_2$ is a minimal point,
	we know that $N(x_1,U_2)$ is a central set and hence it contains an IP-set $FP(\{p_i\}_{i=1}^{\infty})$.
	By Lemma \ref{lem:supp-IP=subset} and Remark~\ref{rem:true-minimal-invariant}, there exists a sub-IP set $FP(\{q_j\}_{j=1}^{\infty})\subseteq FP(\{p_i\}_{i=1}^{\infty})$
	and $x_3\in U_1$ such that $gx_3\in  U_1$
	for each $g\in FP(\{q_j\}_{j=1}^{\infty})$. Then $d(gx_1, gx_3)>\delta$ for each $g\in FP(\{q_j\}_{j=1}^{\infty})$,
	this implies that $(X,G)$ is pairwise IP-sensitive.
\end{proof}

We have the following structure of almost pairwise IP$^*$-equicontinuous.
 
\begin{thm}\label{thm:stru-almost-IP*-eq}
	Let $(X,G)$ be a minimal system which admits an invariant measure.
	Then the following statements are equivalent:
	\begin{enumerate}
		\item $(X,G)$ is almost pairwise IP$^*$-equicontinuous;
         \item $(X,G)$ is point-distal and $P(X,G)$ is closed;
  		\item $\pi\colon (X,G)\to (X_d,G)$ is almost one-to-one, where $(X_d,G)$ is the maximal distal factor of $(X,G)$.
	\end{enumerate}
\end{thm}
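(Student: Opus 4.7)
The plan is to establish the cycle $(1)\Rightarrow (2)\Rightarrow (3)\Rightarrow (1)$, leveraging the ingredients already in the paper (the dichotomy, the proximal-non-closed criterion, and the almost-one-to-one lemma).

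For $(1)\Rightarrow (2)$, I start from the fact that $\eq^{\ip^*}(X,G)$ is residual. Because $(X,G)$ is minimal and admits an invariant measure, Remark~\ref{rem:true-minimal-invariant} extends Lemma~\ref{lem:IP-star-eq-point-distal} to our setting, so every pairwise IP$^*$-equicontinuous point is distal. Thus the distal points are residual and $(X,G)$ is point-distal. To see $P(X,G)$ is closed, suppose for contradiction it is not; then Proposition~\ref{prop:P-non-closed-IP-sen} produces a sensitivity constant $\delta>0$ so that every nonempty open set contains a pair $(x_1,x_2)$ with $\{g\colon d(gx_1,gx_2)>\delta\}$ an IP-set. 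Picking a pairwise IP$^*$-equicontinuous point $x$ and shrinking its neighborhood to fit $\eps=\delta/2$ gives an IP$^*$-set of $g$'s forcing $d(gx_1,gx_2)<\delta/2$; intersecting the IP-set with this IP$^*$-set (by the definition of IP$^*$) yields an immediate contradiction.

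For $(2)\Rightarrow (3)$, closedness of $P(X,G)$ upgrades it, via Lemma~\ref{lem:proximal-closed}, to a closed $G$-invariant equivalence relation. Since $S_d$ is the smallest such relation containing $P(X,G)$ and conversely any closed equivalence relation containing $P(X,G)$ yields a distal factor, we get $S_d=P(X,G)$, so the maximal distal factor is realized by the quotient map $\pi\colon X\to X/P(X,G)=X_d$. By point-distality, choose any distal point $x\in X$; minimality gives $\overline{Gx}=X$, so being distal means no $y\neq x$ is proximal to $x$, i.e. $P(X,G)[x]=\{x\}$, hence $\pi^{-1}(\pi(x))=\{x\}$. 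The recalled characterization of almost one-to-one factor maps between minimal systems then makes $\pi$ almost one-to-one.

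For $(3)\Rightarrow (1)$, since $\pi$ is almost one-to-one the set $\{x\in X\colon \pi^{-1}(\pi(x))=\{x\}\}$ is residual, and $(X_d,G)$ is distal by construction. Lemma~\ref{lem:distal-almost-1-1} then applies to every such $x$, showing it is pairwise IP$^*$-equicontinuous. Hence $\eq^{\ip^*}(X,G)$ contains a residual set, so is residual, which is $(1)$.

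The only step that needs genuine care is $(1)\Rightarrow (2)$, and within it the claim that $P(X,G)$ is closed; the rest is essentially bookkeeping over already-developed machinery. The obstacle there is mostly conceptual — one must notice that a single IP$^*$-equicontinuous point, combined with IP-sensitivity forced by a non-closed $P(X,G)$, creates an incompatible IP/IP$^*$ pair on an arbitrarily small neighborhood — rather than technical. The invariant measure hypothesis enters precisely at these two junctures (Remark~\ref{rem:true-minimal-invariant} and Proposition~\ref{prop:P-non-closed-IP-sen}), so I would flag it explicitly when it is used.
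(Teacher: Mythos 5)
Your proposal is correct and follows essentially the same route as the paper: point-distality via Lemma~\ref{lem:IP-star-eq-point-distal} and Remark~\ref{rem:true-minimal-invariant}, closedness of $P(X,G)$ via Proposition~\ref{prop:P-non-closed-IP-sen}, the identification $S_d=P(X,G)$ for $(2)\Rightarrow(3)$, and Lemma~\ref{lem:distal-almost-1-1} for $(3)\Rightarrow(1)$. Your explicit IP/IP$^*$ intersection argument merely unpacks the contradiction the paper leaves implicit when invoking Proposition~\ref{prop:P-non-closed-IP-sen}.
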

\begin{proof} 
(1)$\Rightarrow$(2) 
As $(X,G)$ is almost pairwise IP$^*$-equicontinuous, the collection  $\eq^{\ip^*}(X,G)$ of all pairwise IP$^*$-equicontinuous points is residual in $X$. Since $(X,G)$ is a minimal system which admits an invariant measure, by Lemma~\ref{lem:IP-star-eq-point-distal} and Remark~\ref{rem:true-minimal-invariant}, every pairwise IP$^*$-equicontinuous point is distal. Then the collection of all distal points is residual in $X$, that is $(X,G)$ is point-distal. By Proposition~\ref{prop:P-non-closed-IP-sen}, we known that the proximal relation $P(X,G)$ is closed.

(2)$\Rightarrow$(3) Let
$\pi\colon (X,G)\to (X_d,G)$ be the factor map to its  maximal distal factor of $(X,G)$. As $P(X,G)$ is closed, by Lemma~\ref{lem:proximal-closed}, $P(X,G)$ is a $G$-invariant closed equivalence relation on $X$. Thus $R_\pi=P(X,G)$. Then for every point $x\in X$, $\pi^{-1}(\pi(x))=P(X,G)[x]$.
As $(X,G)$ is point-distal, the collection of all distal points is residual in $X$.
For every distal point $x\in X$, we claim that $P(X,G)[x]=\{x\}$.
Indeed, if $(x,y)$ is a proximal pair, as $x,y$ are minimal points then $ \overline{Gx}=\overline{Gy}$. In particular $y\in \overline{Gx}$. Then $x=y$ since $x$ is a distal point.
Therefore, for every distal point $x\in X$, $\pi^{-1}(\pi(x))$ is a singleton. This shows that $\pi$ is almost one-to-one.

(3)$\Rightarrow$(1) Let
$\pi\colon (X,G)\to (X_d,G)$ be the factor map to its  maximal distal factor of $(X,G)$ and $A=\{x\in X\colon \pi^{-1}(\pi(x))=\{x\}\}$.
As $\pi$ is almost one-to-one, $A$ is residual in $X$.
According to Lemma~\ref{lem:distal-almost-1-1}, every point in $A$ is pairwise IP$^*$-equicontinuous. Then $(X,G)$ is almost pairwise IP$^*$-equicontinuous. 
\end{proof}

The following result reveals that pairwise IP$^*$-equicontinuous points are the points with trivial section in the distal structure relation.
\begin{prop}
Let $(X,G)$ be a minimal system which admits an invariant measure. Then a point $x\in X$ is pairwise IP$^*$-equicontinuous if and only if $S_d(x)=\{x\}$, where $S_d$ is the distal structure relation of $(X,G)$.
\end{prop}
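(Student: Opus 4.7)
The plan is to prove each direction of the biconditional separately. The key observation tying everything together is that, since $X/S_d=X_d$ by definition, $S_d$ coincides with the fiber relation $R_\pi$ of the factor map $\pi\colon (X,G)\to (X_d,G)$ onto the maximal distal factor; in particular, $S_d(x)=\{x\}$ is equivalent to $\pi^{-1}(\pi(x))=\{x\}$.

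The ``if'' direction is immediate from Lemma~\ref{lem:distal-almost-1-1}: assuming $S_d(x)=\{x\}$ means $\pi^{-1}(\pi(x))=\{x\}$, and since $(X_d,G)$ is distal, that lemma applies directly to conclude $x$ is pairwise IP$^*$-equicontinuous. No use of the invariant measure is needed for this direction.

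For the ``only if'' direction, suppose $x\in\eq^{\ip^*}(X,G)$. Since this set is nonempty, Proposition~\ref{prop:minimal-ip*-eq-points} forces it to be residual, so $(X,G)$ is almost pairwise IP$^*$-equicontinuous. Theorem~\ref{thm:stru-almost-IP*-eq} then tells us that the proximal relation $P(X,G)$ is closed. Combining this with Lemma~\ref{lem:proximal-closed}, $P(X,G)$ is itself a closed $G$-invariant equivalence relation; but $S_d$ is the smallest such relation containing $P(X,G)$, so in fact $S_d=P(X,G)$. Meanwhile, Lemma~\ref{lem:IP-star-eq-point-distal} together with Remark~\ref{rem:true-minimal-invariant} ensures that the pairwise IP$^*$-equicontinuous point $x$ is itself a distal point. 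To conclude, take any $y\in S_d(x)=P(X,G)[x]$; then $(x,y)$ is proximal, and since minimality gives $y\in X=\overline{Gx}$, distality of $x$ forces $y=x$. Hence $S_d(x)=\{x\}$.

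The only non-routine step is recognizing the cascade triggered by the existence of a single pairwise IP$^*$-equicontinuous point: Proposition~\ref{prop:minimal-ip*-eq-points} upgrades this to an ``almost everywhere'' statement, which unlocks the structural conclusions of Theorem~\ref{thm:stru-almost-IP*-eq}, in particular the coincidence $S_d=P(X,G)$. Once that identification is in hand, distality of $x$ combined with minimality collapses $S_d(x)$ to $\{x\}$ in a single line, and the reverse implication is already packaged in Lemma~\ref{lem:distal-almost-1-1}.
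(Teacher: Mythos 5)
Your proof is correct and follows essentially the same route as the paper's: the ``if'' direction via Lemma~\ref{lem:distal-almost-1-1}, and the ``only if'' direction by upgrading one pairwise IP$^*$-equicontinuous point to almost pairwise IP$^*$-equicontinuity (Proposition~\ref{prop:minimal-ip*-eq-points}) and then invoking the structural conclusions of Theorem~\ref{thm:stru-almost-IP*-eq} to identify $S_d$ with $P(X,G)$ and collapse $S_d(x)$ using distality of $x$. The only difference is that you spell out explicitly the steps the paper compresses into the citation ``the proof of Theorem~\ref{thm:stru-almost-IP*-eq}''.
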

\begin{proof}
Let $\pi\colon (X,G)\to (X_d,G)$ be the factor map to its  maximal distal factor of $(X,G)$. Then $R_\pi=S_d$.
If $S_d(x)=\{x\}$, by Lemma~\ref{lem:distal-almost-1-1}, $x$ is pairwise IP$^*$-equicontinuous.
Now assume that $x$ is  pairwise IP$^*$-equicontinuous. 
By Proposition~\ref{prop:minimal-ip*-eq-points}, $(X,G)$ is almost pairwise IP$^*$-equicontinuous. 
By Lemma~\ref{lem:IP-star-eq-point-distal},  Remark~\ref{rem:true-minimal-invariant} and
the proof of Theorem~\ref{thm:stru-almost-IP*-eq}, we known that $S_d(x)=P(X,G)[x]=\{x\}$.
\end{proof}

\begin{rem}
Let $(X,G)$ be a minimal system and $\pi\colon (X,G)\to (X_d,G)$ the factor map to its the maximal distal factor of $(X,G)$. By \cite[VI (5.21)]{Vries1993}, the factor map $\pi$ has a largest almost  one-to-one factor $(Z,G)$. If in addition $(X,G)$ admits an invariant measure, then by Theorem~\ref{thm:stru-almost-IP*-eq}, $(Z,G)$ is also the maximal almost pairwise IP$^*$-equicontinuous factor of $(X,G)$, that is, every almost pairwise IP$^*$-equicontinuous factor of $(X,G)$ is a factor of $(Z,G)$.
\end{rem}

\section{\texorpdfstring{Pairwise central$^*$-equicontinuity and distality}{Pairwise central*-equicontinuity and distality}}

In this section we introduce a new kind of weak equicontinuity, named pairwise central$^*$-equicontinuity. 
We characterize distality via pairwise central$^*$-equicontinuity.  The structure theorem for (metric) minimal systems plays an important role in this section.

\subsection{\texorpdfstring{Pairwise central$^*$-equicontinuity}{Pairwise central*-equicontinuity}}

Let $(X,G)$ be a dynamical system.
A point $x$ in $X$ is called \emph{pairwise central$^*$-equicontinuous} 
if for any $\eps>0$ there exists a neighbourhood $U$ of $x$ such that 
for any $y,z\in U$, $\{g\in G\colon d(gy, gz)<\eps\}$ is a central$^*$-set. 
The dynamical system $(X,G)$ is called \emph{pairwise central$^*$-equicontinuous} if every point in $X$ is pairwise central$^*$-equicontinuous.

Since every IP$^*$-set is a central$^*$-set, every pairwise IP$^*$-equicontinuous point is pairwise central$^*$-equicontinuous and every pairwise IP$^*$-equicontinuous system is pairwise central$^*$-equicontinuous.

Similar to proof of Lemma~\ref{lem:IP*-eq-limit-distal}, we have the following result.
\begin{lem}\label{lem:central*-eq-limit-distal}
Let $(X,G)$ be a dynamical system. 
\begin{enumerate}
    \item 
If $x\in X$ is pairwise central$^*$-equicontinuous and a limit point of distal points, then $x$ is a distal point;
\item If $x\in X$ is pairwise central$^*$-equicontinuous and a limit point of central recurrent points, then $x$ is a central recurrent point.
\end{enumerate}
\end{lem}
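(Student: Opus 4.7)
The plan is to mimic the proof of Lemma~\ref{lem:IP*-eq-limit-distal} almost verbatim, replacing the role of Proposition~\ref{prop:IP*-set} by its central$^*$-analogue Proposition~\ref{prop:central*-set}. The two items are handled by the same trick (triangle inequality together with an intersection of return-time sets); the only difference is which combinatorial closure property is invoked.

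For item~(1), fix $\eps>0$ and, by pairwise central$^*$-equicontinuity at $x$, choose a neighbourhood $U$ of $x$ such that for every $y,z\in U$ the set $\{g\in G\colon d(gy,gz)<\eps\}$ is a central$^*$-set. Since $x$ is a limit point of distal points, pick a distal point $z\in U\cap B(x,\eps)$. By Theorem~\ref{thm:distal-ip-star-rec}, $z$ is central$^*$-recurrent, so $N(z,U\cap B(x,\eps))$ is a central$^*$-set. Applying pairwise central$^*$-equicontinuity to the pair $(x,z)\in U\times U$ gives that $\{g\in G\colon d(gx,gz)<\eps\}$ is central$^*$, and by Proposition~\ref{prop:central*-set}(1) the intersection
\[
F:=\{g\in G\colon d(gx,gz)<\eps\}\cap N(z,U\cap B(x,\eps))
\]
is still a central$^*$-set. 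The triangle inequality gives $F\subset N(x,B(x,2\eps))$, so $N(x,B(x,2\eps))$ is central$^*$. Since $\eps$ was arbitrary, $x$ is central$^*$-recurrent, hence distal by Theorem~\ref{thm:distal-ip-star-rec}.

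For item~(2), the argument is identical except for the combinatorial step: now $z$ is chosen to be a central-recurrent point in $U\cap B(x,\eps)$, so $N(z,U\cap B(x,\eps))$ is a central set; the set $\{g\in G\colon d(gx,gz)<\eps\}$ is central$^*$ as before, and Proposition~\ref{prop:central*-set}(2) tells us that the intersection of a central$^*$-set with a central set is central. The same triangle-inequality inclusion shows that $N(x,B(x,2\eps))$ contains a central set, and is therefore itself central by the superset closure built into the definition of central set. As $\eps$ was arbitrary, $x$ is central-recurrent.

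I do not anticipate a genuine obstacle: the whole lemma is a clean translation of the IP$^*$ proof into the central$^*$/central setting. The only thing to be careful about is invoking the right clause of Proposition~\ref{prop:central*-set} in each of the two items (clause~(1) for a central$^*\cap$ central$^*$ intersection in part~(1), clause~(2) for a central$^*\cap$ central intersection in part~(2)), together with noting that central sets are upward closed so that containment of a central set suffices to conclude centrality of $N(x,B(x,2\eps))$.
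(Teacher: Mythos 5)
Your proof is correct and is exactly the argument the paper intends: the paper gives no separate proof of this lemma, simply remarking that it is ``similar to the proof of Lemma~\ref{lem:IP*-eq-limit-distal}'', and your write-up carries out that translation faithfully, including the correct use of the two clauses of Proposition~\ref{prop:central*-set} and the upward closure of central sets.
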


Since we do not have the corresponding result of Lemma~\ref{lem:supp-IP=subset} for central sets, we should first study the conditions for the opposite of pairwise central$^*$-equicontinuity. 
A dynamical system $(X,G)$ is called \emph{pairwise central sensitive} if there exists a constant $\delta>0$ with the property that for each nonempty open subset $U$ of $X$,
there exist $x_1,x_2\in U$  such that $\bigl\{g\in G \colon d(g x_1,g x_2)>\delta\bigr\}$ is a central set.

\begin{prop}\label{prop:proximal-not-almost-1-1}
	Let $\pi:(X,G)\to (Y,G)$ be a factor map between minimal systems with $X\times X$ having a dense set of minimal points.
	If $\pi$ is proximal but not almost one-to-one, then $(X,G)$ is pairwise central sensitive.
\end{prop}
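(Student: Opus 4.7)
The plan is to combine three ingredients: a uniform positive lower bound on fiber diameters (arising from $\pi$ being not almost one-to-one); the existence of an off-diagonal minimal point of $X\times X$ with large distance (from density of minimal points together with that bound); and the characterization of central sets via minimal proximal pairs (Proposition~\ref{prop:central-chara}), applied inside the product system $(X\times X,G)$.

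First I would prove the uniform fiber estimate: there exists $\delta>0$ with $\diam(\pi^{-1}(y))\geq 4\delta$ for every $y\in Y$. Since $\pi$ is a factor map between minimal systems, almost one-to-oneness is equivalent to some fiber being a singleton, so the hypothesis forces every fiber to have at least two points. The function $y\mapsto\diam(\pi^{-1}(y))$ is upper semi-continuous and strictly positive, so the closed sets $A_n=\{y\colon \diam(\pi^{-1}(y))\geq 1/n\}$ cover $Y$; Baire's theorem gives some $A_{n_0}$ with nonempty interior $V$; minimality of $(Y,G)$ covers $Y$ by finitely many translates $h_1V,\dotsc,h_mV$; and uniform continuity of $h_1^{-1},\dotsc,h_m^{-1}$ on $X$ propagates the lower bound to all of $Y$. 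Consequently, for every $p\in X$ there is $a\in\pi^{-1}(\pi(p))$ with $d(p,a)\geq 2\delta$, and such $(p,a)$ is proximal because $\pi$ is. Next, the open set $\Omega=\{(u,v)\in X\times X\colon d(u,v)>3\delta\}$ is nonempty (it contains any fiber pair of maximal distance), so by the density-of-minimal-points hypothesis it contains a minimal point $(w_1,w_2)$ of $X\times X$. Because $\Delta$ is itself a minimal subsystem and $(w_1,w_2)\notin\Delta$, the minimal subsystem $\overline{G(w_1,w_2)}$ is disjoint from $\Delta$, so $(w_1,w_2)$ is a distal pair. This furnishes the reservoir of minimal points with the required distance.

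Finally, for any nonempty open $U\subset X$, I would construct $(x_1,x_2)\in U\times U$ so that $F:=\{g\in G\colon d(gx_1,gx_2)>\delta\}$ is central. Choose $p\in U$ and $a\in\pi^{-1}(\pi(p))$ with $d(p,a)\geq 2\delta$, and an $\eta<\delta/2$ with $B(p,\eta)\subset U$. Applying density of minimal points to $B(p,\eta)\times B(a,\eta)$ yields a minimal point $(w_1',w_2')$ with $w_1'\in U$ and $d(w_1',w_2')>\delta$. Using the proximality of $(p,a)$ select $g_n\in G$ with $g_np,g_na\to z$; using minimality of $(X,G)$ fix $k\in G$ with $kz$ in the interior of $U$; and set $(x_1,x_2):=(kg_np,kg_na)\in U\times U$ for large $n$. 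Invoking uniform continuity of the fixed homeomorphism $k$ (chosen before $n$) together with $d(w_1',p),d(w_2',a)<\eta$, one verifies that $((x_1,x_2),(kg_nw_1',kg_nw_2'))$ is proximal in $X\times X$, while $(kg_nw_1',kg_nw_2')$ is a minimal point of the minimal subsystem $M':=\overline{G(w_1',w_2')}$. Since the set $\{(u,v)\in M'\colon d(u,v)>\delta\}$ is a nonempty open set of $M'$ (it contains $(w_1',w_2')$), Proposition~\ref{prop:central-chara} applied in $(X\times X,G)$ gives a neighborhood $V$ of some minimal point in $V_\delta:=\{d>\delta\}$ with $N((x_1,x_2),V)$ central and contained in $F$; hence $F$ is central.

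The main obstacle is the proximality verification in the final step: individual elements of $G$ act as homeomorphisms of $X$ but not as an equicontinuous family, so transporting the closeness of $(w_1',w_2')$ to $(p,a)$ through the composition $k\circ g_n$ demands care with the order of choosing parameters—selecting $\eta$ only after $k$ is fixed, so that uniform continuity of the single map $k$ controls $d(kg_nw_i',kg_nx_i)$ uniformly in $n$. A cleaner alternative, if the direct construction proves too fragile, is to apply the Auslander--Ellis theorem (Theorem~\ref{thm:proximal}) to $(x_1,x_2)\in U\times U$ directly, produce a minimal point in $\overline{G(x_1,x_2)}$ proximal to $(x_1,x_2)$, and then use the density of minimal points together with the uniform fiber bound from the first step to argue that some such minimal point must fall in $V_\delta$.
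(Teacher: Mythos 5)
There is a genuine gap at the decisive step: you never establish that your candidate pair $(x_1,x_2)$ is proximal in $X\times X$ to an off-diagonal minimal point, which is exactly what is needed to conclude that $\{g\in G\colon d(gx_1,gx_2)>\delta\}$ is central. Your minimal point $(w_1',w_2')$ is chosen only to be metrically close to $(p,a)$, and metric closeness does not imply proximality: the pair $((p,a),(w_1',w_2'))$ --- equivalently, since the proximal relation is $G$-invariant, the pair $((x_1,x_2),(kg_nw_1',kg_nw_2'))$ --- has no reason to be proximal, and no reordering of the choices of $\eta$, $k$, $n$ can repair this, because $d(g_nw_i',g_np)$ is not controlled by $d(w_i',p)$ as $g_n$ ranges over infinitely many group elements, and in any case closeness at the single time $kg_n$ is not proximality. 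Worse, your pair $(x_1,x_2)=kg_n(p,a)$ lies in $R_\pi$; since $\pi$ is proximal, any minimal point of $X\times X$ belonging to $R_\pi$ is proximal and hence diagonal (its orbit closure is minimal and meets the closed invariant set $\Delta_X$, so is contained in it). In particular every minimal point of $\overline{G(x_1,x_2)}\subset R_\pi$ is diagonal, so your fallback of applying Theorem~\ref{thm:proximal} to this $(x_1,x_2)$ produces only diagonal minimal points and cannot yield sensitivity either.

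The repair --- and the paper's actual argument --- is to reverse the order of construction and obtain proximality structurally rather than metrically. Since $\pi$ is semi-open, pick $y_0\in\Int(\pi(U))$ and $u_1,u_2\in\pi^{-1}(y_0)$ with $d(u_1,u_2)>3\delta$, set $W_i=B(u_i,\delta)\cap\pi^{-1}(\Int(\pi(U)))$, and choose the minimal point $(y_1,y_2)\in W_1\times W_2$ \emph{first}; then choose $x_i\in U$ with $\pi(x_i)=\pi(y_i)$, which is possible because $\pi(y_i)\in\pi(U)$. Now $((x_1,x_2),(y_1,y_2))\in R_{\pi\times\pi}$ and $\pi\times\pi$ is proximal, so the pair is proximal for free; $(y_1,y_2)$ is minimal, and $N((x_1,x_2),W_1\times W_2)\subset\{g\in G\colon d(gx_1,gx_2)>\delta\}$ is central. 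Note that $x_1$ and $x_2$ then lie in the \emph{different} fibers of $y_1$ and $y_2$, which is precisely what allows the target minimal point to sit off the diagonal. Your opening reduction to the uniform fiber bound $\inf_{y\in Y}\diam(\pi^{-1}(y))\geq 4\delta$ is correct and matches the lemma the paper cites, but the assembly of the remaining ingredients does not go through as written.
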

\begin{proof}
	As $\pi$ is not almost one-to-one, by~\cite[Lemma 3.3]{Li-Yang1}, $$\delta:=\frac{1}{4}\inf_{y\in Y}\diam(\pi^{-1}(y))>0.$$
    Fix a nonempty open subset $U$ of $X$.
	Since $(X,G)$ is minimal, $\pi$ is semi-open then the interior $\mathrm{Int}(\pi(U))$ of $U$ is not empty.
	Pick a point $y_0\in \mathrm{Int} (\pi(U))$. 
    Since $\diam(\pi^{-1}(y_0))\geq 4\delta>0$, we can find $u_1, u_2 \in \pi^{-1}(y_0)$ such that $d(u_1,u_2)>3\delta$.
	Let $W_i = B(u_i,\delta) \cap \pi^{-1}(\mathrm{Int}(\pi (U)))$, $i=1,2$.
	Then $W_1,W_2$ are nonempty open subsets of $X$ with $d(W_1,W_2)>\delta$.
	Since $(X\times X,G)$  has a  dense set of minimal points,
	choose a minimal point $(y_1,y_2)\in W_1\times W_2$ and points $x_i\in U$ with $\pi(x_i)=\pi(y_i)$ for $i=1,2$.
     As $\pi$ is proximal, it is easy to see that $\pi\times \pi\colon (X\times X,G)\to (Y\times Y,G)$ is also proximal (see e.g. \cite[Propsition~V(2.9)]{Vries1993}).
    Then $((x_1,x_2), (y_1,y_2))$ is proximal and
	$N((x_1, x_2), W_1\times W_2)$ is a central set, and 
	$\{g\in G\colon d(gx_1, gx_2) >\delta\}\supset N((x_1, x_2), W_1\times W_2)$. This shows that $(X,G)$ is pairwise central sensitive.
\end{proof}

We need the following structure theorem for (metric) minimal systems,
see \cite{EGS1975}.
\begin{thm}\label{thm:structure-thm-minimal-sys} 
For every minimal system $(X,G)$ there exists a countable ordinal $\eta$ 
and canonically determined minimal systems $(X_\nu,G)$, $(Y_\nu,G)$ and $(Z_\nu,G)$ with
$1\leq \nu\leq \eta$, 
and a commutative diagram 
\begin{equation*}
\small 
\xymatrix{
X \ar[d]^{\pi} \ar[dr]^{\sigma_1} &  &  X_1  \ar[d]^{\pi_1} \ar[ll]_{\theta^*_1}  & \dotsb  \ar[l] & X_\nu \ar[l] \ar[d]^{\pi_{\nu}} \ar[dr]^{\sigma_{\nu+1}} &  & X_{\nu+1} \ar[ll]_{\theta^*_{\nu+1}} \ar[d]^{\pi_{\nu+1}}&  \dotsb \ar[l] & X_\eta \ar[l] \ar[d]^{\pi_\eta} \\
\{\star\}  & Z_1 \ar[l]^{\rho_1} & Y_1 \ar[l]^{\theta_1}  & \ar[l] \dotsb  & Y_\nu \ar[l] & Z_{\nu+1} \ar[l]^{\rho_{\nu+1}} & Y_{\nu+1} \ar[l]^{\theta_{\nu+1}} & \dotsb \ar[l] & Y_\eta \ar[l]
}	
\end{equation*}
such that $\{\star\}$ is the singleton,  for each $\nu\leq \eta$, $\pi_\nu$ is RIC, $\rho_\nu$ is isometric, 
$\theta_{\nu}, \theta_{\nu}^*$ are proximal and $\pi_\eta$ is RIC and weakly mixing.
For a limit ordinal $\nu$, $X_\nu$, $Y_\nu$, $\pi_\nu$  etc.\@ 
are the inverse limits of $X_\iota$, $Y_\iota$, $\pi_\iota$  etc.\@  for $\iota<\nu$.
\end{thm}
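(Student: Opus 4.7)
The plan is to construct the tower by transfinite induction on $\nu$, using two fundamental ingredients from the Ellis--Glasner--Shapiro theory: the \emph{RIC modification lemma} and the \emph{structure theorem for RIC extensions} of minimal systems. The base of the induction is the trivial factor $X\to\{\star\}$; at successor stages I alternate between taking a maximal isometric sub-extension and lifting to a RIC extension via a proximal modification; at limit stages I take inverse limits.

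First I would recall (or establish) that a factor map $\pi\colon X\to Y$ between minimal systems is \emph{RIC} if, roughly, $X$ sits above $Y$ as the "tautological" fibered product built from the circle operation on minimal idempotents. The first main lemma I would invoke is: given any factor map $\phi\colon X\to Y$ of minimal systems, there exists a commutative square with $X^{*}\to X$ a proximal extension and $X^{*}\to Y$ a RIC extension. This is the step that produces the proximal maps $\theta_{\nu}^{*}$ in the top row. The second main lemma is the Furstenberg--Ellis dichotomy for RIC extensions: if $\pi\colon X\to Y$ is a RIC extension of minimal systems that is not weakly mixing, then it factors non-trivially as $X\to W\to Y$ with $W\to Y$ isometric. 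Iterating the isometric step inside a single RIC extension (and using that the union of isometric sub-extensions is isometric) produces a \emph{maximal} isometric intermediate factor, which will give $\rho_{\nu+1}$.

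With these two lemmas in hand, the inductive construction is as follows. At a successor stage $\nu+1$, the current RIC extension $\pi_{\nu}\colon X_{\nu}\to Y_{\nu}$ is either weakly mixing (in which case I stop and set $\eta=\nu$) or it admits a non-trivial maximal isometric intermediate factor $\rho_{\nu+1}\colon Z_{\nu+1}\to Y_{\nu}$; I then apply the RIC modification lemma to the map $X_{\nu}\to Z_{\nu+1}$ to obtain $X_{\nu+1}\to X_{\nu}$ proximal ($\theta^{*}_{\nu+1}$) and $X_{\nu+1}\to Y_{\nu+1}$ RIC, where $Y_{\nu+1}$ is the target $Z_{\nu+1}$ after the proximal modification on the base (giving rise to $\theta_{\nu+1}$). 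At a limit ordinal $\nu$, I define $X_{\nu}$, $Y_{\nu}$, $Z_{\nu}$ and $\pi_{\nu}$ as inverse limits of the previously constructed data; one checks that inverse limits of minimal systems are minimal, that inverse limits of RIC extensions are RIC, and that the proximal/isometric relations survive the limit.

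Finally I would argue that the process terminates at a countable ordinal. The key point is that $X$ is a compact metric space, hence second countable, so its lattice of closed $G$-invariant equivalence relations has at most $\mathfrak{c}$-many elements but the strictly increasing chain of factors built in the construction can grow only countably many times without violating the separability of $C(X)$. At the terminal stage, $\pi_{\eta}$ is RIC and admits no non-trivial isometric intermediate factor, so by the RIC dichotomy it is weakly mixing. The main obstacle is the second key lemma, the dichotomy decomposition of a RIC extension into an isometric part followed by a weakly mixing part; proving it requires working in the enveloping semigroup of $(X,G)$, analysing minimal idempotents and the derived group of the Ellis group of the extension, and this is where essentially all the technical work of the theorem is concentrated. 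The remaining steps (termination, limit ordinals, preservation of properties under proximal modifications) are comparatively routine once that dichotomy is established.
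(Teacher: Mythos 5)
The paper does not actually prove this theorem---it is quoted, with a citation, from Ellis--Glasner--Shapiro \cite{EGS1975}---and your outline correctly reproduces the standard strategy of that proof: transfinite alternation of a maximal isometric intermediate factor with a RIC/proximal (``shadow diagram'') modification, inverse limits at limit ordinals, and termination at a countable ordinal via metrizability/separability, with the terminal RIC extension weakly mixing by the dichotomy. Be aware, though, that what you have written is a proof outline rather than a proof: the two lemmas you isolate (the RIC modification of an arbitrary factor map, and the dichotomy that a non--weakly-mixing RIC extension of minimal systems admits a nontrivial isometric intermediate factor) carry essentially all of the content of the theorem, and you correctly acknowledge that establishing them requires the full Ellis-group machinery that you do not supply.
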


\begin{rem}\label{rem:structure-min-sym}\leavevmode
\begin{enumerate}
    \item The definition of relatively incompressible (RIC) extension is more involved. We only need the fact that if $\pi\colon (X,G)\to (Y,G)$ is a RIC factor map between minimal system then $R_\pi$ has a dense set of minimal points.
    \item Since an inverse limit of proximal extension is also a proximal extension (see e.g. \cite[Corollary V(2.10)]{Vries1993}),
    the factor map $\theta\colon X_\eta\to X$ in Theorem~\ref{thm:structure-thm-minimal-sys} is proximal.
\end{enumerate}
\end{rem}

We say that a minimal system $(X,G)$ is a \emph{proximal-isometric system} (\emph{PI system} for short) if the factor map $\pi_\eta$ in the structure of $(X,G)$ is a homeomorphism. 
The following result is inspired by \cite[Proposition 5.5]{YY18}, but the proof here is more straightforward.

\begin{prop}\label{prop:non-PI-pairwise-cen-sen}
If a minimal system $(X,G)$ is not a PI system then it is pairwise central sensitive.
\end{prop}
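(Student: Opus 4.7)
The plan is to exploit the structure theorem (Theorem~\ref{thm:structure-thm-minimal-sys}) by working upstairs in $X_\eta$, where the top factor map $\pi_\eta\colon X_\eta\to Y_\eta$ is RIC and weakly mixing. Since $(X,G)$ is not PI, $\pi_\eta$ is not a homeomorphism, so $R_{\pi_\eta}\supsetneq \Delta_{X_\eta}$. Three facts will drive the argument: $R_{\pi_\eta}$ has a dense set of minimal points (since $\pi_\eta$ is RIC, cf.\ Remark~\ref{rem:structure-min-sym}(1)); $(R_{\pi_\eta},G)$ is transitive (since $\pi_\eta$ is weakly mixing); and the factor map $\theta\colon X_\eta\to X$ from Remark~\ref{rem:structure-min-sym}(2) is proximal, hence so is $\theta\times\theta$.

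The first step is to produce a minimal point $(m_1^*,m_2^*)\in R_{\pi_\eta}$ with $\theta(m_1^*)\neq\theta(m_2^*)$. Suppose no such minimal point exists; by density of minimal points this forces $R_{\pi_\eta}\subset R_\theta$, so $\theta$ factors through $\pi_\eta$ and $X$ becomes a factor of $Y_\eta$. Since $Y_\eta$ is built from the trivial system by a transfinite tower of isometric and proximal extensions it is a PI system, and the class of PI systems is closed under factors; hence $X$ itself would be PI, contradicting the hypothesis. This appeal to closedness of PI under factors, a classical result of the Ellis--Glasner--Shapiro structure theory, is the one non-trivial ingredient beyond what is already in the paper, and is the main obstacle I expect to confront. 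With $(m_1^*,m_2^*)$ in hand, set $(v_1^*,v_2^*):=(\theta(m_1^*),\theta(m_2^*))$; since $\theta\times\theta$ sends minimal points to minimal points, $(v_1^*,v_2^*)$ is a minimal point of $(X\times X,G)$ with $v_1^*\neq v_2^*$, hence distal (a minimal pair with distinct coordinates cannot be proximal, for otherwise its minimal orbit closure would meet $\Delta$ and thus lie entirely in $\Delta$). Put $\delta_0:=\tfrac{1}{3}\inf_{g\in G}d(gv_1^*,gv_2^*)>0$.

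Now fix any nonempty open $U\subset X$ and set $U':=\theta^{-1}(U)$. The open set $(U'\times U')\cap R_{\pi_\eta}$ is nonempty (it contains the diagonal of $U'$), so by transitivity of $R_{\pi_\eta}$ one can choose a transitive point $(t_1,t_2)\in(U'\times U')\cap R_{\pi_\eta}$. Its orbit closure $\overline{G(t_1,t_2)}=R_{\pi_\eta}$ contains the minimal set $M^*:=\overline{G(m_1^*,m_2^*)}$, so Theorem~\ref{thm:proximal} supplies a minimal point $(m_1',m_2')\in M^*$ with $((t_1,t_2),(m_1',m_2'))$ proximal in $X_\eta\times X_\eta$. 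Setting $x_i:=\theta(t_i)\in U$ and $w_i:=\theta(m_i')$, continuity of $\theta$ makes $(x_1,x_2)\in U\times U$ proximal in $X\times X$ to the minimal point $(w_1,w_2)\in(\theta\times\theta)(M^*)=\overline{G(v_1^*,v_2^*)}$, and $d(w_1,w_2)\geq 3\delta_0$ by construction of $\delta_0$. Taking $W_i:=B(w_i,\delta_0)$, the return time set $N((x_1,x_2),W_1\times W_2)$ is central directly from the definition of a central set, and every $g$ in it satisfies $d(gx_1,gx_2)>\delta_0$. Therefore $\{g\in G:d(gx_1,gx_2)>\delta_0\}$ is central, proving that $(X,G)$ is pairwise central sensitive with constant $\delta_0$.
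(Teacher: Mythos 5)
Your argument is correct and follows the same overall strategy as the paper's proof: pass to the top of the EGS tower, use that $\pi_\eta$ is weakly mixing and RIC to get a transitive system $R_{\pi_\eta}$ with dense minimal points, locate a minimal pair that stays uniformly off the diagonal after pushing down by $\theta$, take a transitive point of $R_{\pi_\eta}$ over $U\times U$, apply Theorem~\ref{thm:proximal} to make it proximal to that minimal pair, and read off a central set of separation times. The only genuine divergence is how you produce a minimal point of $R_{\pi_\eta}$ not collapsed by $\theta$: you invoke the classical fact that the PI class is closed under factors (if all minimal points of $R_{\pi_\eta}$ were collapsed, $R_{\pi_\eta}\subset R_\theta$ and $X$ would be a factor of the PI system $Y_\eta$). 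That fact is true and your deduction from it is fine, but it is exactly the external ingredient you flag as the obstacle, and it is avoidable: the paper gets the same conclusion from the proximality of $\theta$ alone. Indeed, if every minimal point of $R_{\pi_\eta}$ lay in $R_\theta$, then since $\theta$ is proximal each such point would be a proximal \emph{minimal} pair, hence diagonal; density of minimal points in $R_{\pi_\eta}$ would then force $R_{\pi_\eta}=\Delta_{X_\eta}$, contradicting that $\pi_\eta$ is not injective. (This is the content of the paper's one-line assertion that $\Delta_X\subsetneq \theta\times\theta(R_\eta)$ because $\theta$ is proximal.) Substituting this observation for the appeal to closedness of PI under factors makes your proof self-contained relative to the paper's toolkit; everything else, including the metric estimates with $3\delta_0$ and the identification of $N((x_1,x_2),W_1\times W_2)$ as a central set, is sound.
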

\begin{proof}
By Theorem~\ref{thm:structure-thm-minimal-sys} and Remark~\ref{rem:structure-min-sym}, we have the following  diagram
\begin{equation*}
    \xymatrix{
X & X_\eta \ar[l]_\theta \ar[d]_{\pi_\eta} \\
& Y_\eta 
    }
\end{equation*}
where $\theta$ is proximal and $\pi_\eta$ is weak mixing and RIC.
Denote $R_\eta = \{(x,y)\in X_\eta \times X_\eta \colon \pi_\eta(x)=\pi_\eta(y)\}$.
Then $(R_\eta,G)$ is a transitive system with a dense set of minimal points.
As $(X,G)$ is not a PI system, $\pi_\eta$ is not a homeomorphism, then
$\{(x,x)\in X_\eta \times X_\eta\colon x\in X_\eta\}\subsetneq R_\eta$.
Let $R = \theta\times \theta (R_\eta)$. 
Then $(R,G)$ is a transitive system with a dense set of minimal points.
Since $\theta$ is proximal, $\Delta_X:=\{(x,x)\in X \times X \colon x\in X \}\subsetneq R$. 
Fix a minimal point $(z_1,z_2)\in R\setminus \Delta_X$.
Then there exists a $\delta>0$ such that for every point $(a,b)\in \overline{G(z_1,z_2)}$ one has $d(a,b)>3\delta$. 
Let $U$ be a nonempty open subset of $X$. Then $U\times U \cap R\neq\emptyset$ as $\Delta_X\subset R$.
Pick a transitive point $(x_1,x_2)$ in $U\times U \cap R$. 
By Theorem~\ref{thm:proximal}, there exists a minimal point $(y_1,y_2)\in \overline{G(z_1,z_2)}$
such that $((x_1,x_2),(y_1,y_2))$ is proximal. 
Then $N((x_1,x_2),B(y_1,\delta)\times B(y_2,\delta))$ is a central set, and  $\bigl\{g\in G \colon d(g x_1,g x_2)>\delta\bigr\}\supset N((x_1,x_2),B(y_1,\delta)\times B(y_2,\delta))$.
This shows that $(X,G)$ is  pairwise central sensitive.
\end{proof}

We need the following characterization of PI systems, see e.g. \cite[Page 570]{Vries1993}.
\begin{thm}\label{thm:PI-sys-AI-sys}
A minimal system $(X,G)$ is a PI system if and only if every transitive subsystem of $(X\times X,G)$ with a dense set of minimal points is minimal.
\end{thm}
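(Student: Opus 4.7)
My plan is to use the structure theorem (Theorem~\ref{thm:structure-thm-minimal-sys}) for both directions: a contrapositive argument for $(\Leftarrow)$ and a transfinite induction on the PI-tower for $(\Rightarrow)$.

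For $(\Leftarrow)$, I argue the contrapositive by reusing the construction from the proof of Proposition~\ref{prop:non-PI-pairwise-cen-sen}. If $(X,G)$ is not PI, then $\pi_\eta\colon X_\eta\to Y_\eta$ is weakly mixing, RIC, and not a homeomorphism, so $R_\eta := R_{\pi_\eta}$ is a transitive subsystem of $X_\eta\times X_\eta$ (by weak mixing) with a dense set of minimal points (by RIC, see Remark~\ref{rem:structure-min-sym}(1)) that strictly contains $\Delta_{X_\eta}$. Setting $R:=(\theta\times\theta)(R_\eta)\subset X\times X$, the map $\theta\times\theta$ is a continuous equivariant surjection between minimal systems, so $R$ inherits transitivity and dense minimal points (the orbit closure of the image of a minimal point is the image of the orbit closure, hence minimal). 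Pick a minimal point $(a,b)\in R_\eta\setminus\Delta_{X_\eta}$, which exists because this open set is nonempty and minimal points are dense in $R_\eta$. Then Lemma~\ref{lem:distal-prod-pointwise-minimal} forces $(a,b)$ to be distal, so the proximal factor map $\theta$ cannot identify $a$ and $b$; hence $(\theta(a),\theta(b))\notin\Delta_X$ and $\Delta_X\subsetneq R$. Since $\Delta_X$ is a minimal subsystem of $R$, the subsystem $R$ is not minimal, providing the desired counterexample.

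For $(\Rightarrow)$, suppose $(X,G)$ is PI. Then $X$ is a proximal extension of $Y_\eta$, and $Y_\eta$ is built from the trivial system by a transfinite tower of isometric and proximal extensions. I would prove by transfinite induction along this tower that every minimal system so obtained satisfies the stated property. The base case is immediate. At a limit ordinal, a transitive $M\subset X\times X$ with dense minimal points projects to transitive subsystems $M_\iota\subset X_\iota\times X_\iota$ with dense minimal points, each minimal by induction; any minimal subset $\Delta\subset M$ projects onto each $M_\iota$, and intersecting the preimages over $\iota$ recovers $\Delta=M$. At a proximal successor step $\pi\colon X'\to X''$, the projection $(\pi\times\pi)(M)$ is minimal by hypothesis; using the classical fact that products of proximal extensions are proximal, any two minimal subsets of $M$ contain respective points sent to the same pair downstairs, which are therefore proximal in $X'\times X'$. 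Two minimal points that are mutually proximal share a common orbit closure, so all minimal subsets of $M$ coincide, and density of minimal points forces $M$ to equal this common minimal subset. The isometric successor step proceeds similarly, using the equicontinuous fiber structure to control lifts.

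The main obstacle is the successor step for isometric extensions: projecting $M$ downstairs and applying induction produces a minimal base, but lifting minimality back to $M$ requires exploiting the uniform equicontinuity on $R_{\pi'}$ to control how minimal subsets can spread within fibers. I would handle this by the classical description of an isometric extension between minimal systems as a compact-group-valued skew product over the base, which ensures that two minimal subsets of $M$ projecting to the same minimal downstairs system must coincide. Since both successor steps are standard ingredients of the Ellis-Glasner-Shapiro structure theory, I would cite the relevant lemmas from Auslander, Glasner, or van der Vries rather than reproduce full proofs.
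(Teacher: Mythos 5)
First, a point of comparison: the paper does not prove this statement at all --- it is quoted from \cite[Page 570]{Vries1993} --- so you are supplying an argument where the paper only gives a citation. Your backward direction is essentially correct and coincides with the construction the paper does carry out in Proposition~\ref{prop:non-PI-pairwise-cen-sen}: $R_\eta$ is transitive with dense minimal points, its image $R$ under $\theta\times\theta$ inherits both properties, and a minimal point of $R_\eta$ off the diagonal cannot be a proximal pair, hence cannot be collapsed by the proximal map $\theta$; so $R$ properly contains the closed invariant diagonal and is not minimal. (Your appeal to Lemma~\ref{lem:distal-prod-pointwise-minimal} is not quite the right citation for ``a non-diagonal minimal point of the product is a distal pair,'' but that fact is standard and the way you use it is correct.)

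The forward direction has a genuine gap, in two places. First, the tower runs the wrong way for your induction to reach $X$: when $(X,G)$ is PI, Theorem~\ref{thm:structure-thm-minimal-sys} gives $X_\eta\cong Y_\eta$ together with a proximal factor map $\theta\colon X_\eta\to X$, so $Y_\eta$ is a proximal \emph{extension} of $X$, not the other way around as you write. Your transfinite induction climbs the canonical tower and would establish the property for $Y_\eta$, but you must then push it \emph{down} through $\theta$ to its proximal factor $X$. That descent is not one of your induction steps and is not routine: given a transitive $M\subset X\times X$ with dense minimal points, there is no obvious transitive lift with dense minimal points inside $X_\eta\times X_\eta$ --- the full preimage $(\theta\times\theta)^{-1}(M)$ need be neither transitive nor rich in almost periodic points, and a transitive point of a non-minimal $M$ is not almost periodic, so no point of its fibre is either. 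This is precisely where the published proofs resort to Ellis-group and quasi-factor machinery rather than a naive induction. Second, your mechanism for the isometric successor step is false as stated: two minimal subsets of $M$ projecting onto the same minimal system downstairs need not coincide (take an irrational rotation over the one-point system; the off-diagonal invariant circles in the torus all project to the same point). What actually works there is that an isometric, hence distal, extension pulls almost periodic points back to almost periodic points (the standard minimal-idempotent argument in the enveloping semigroup), so every point of $M$ is almost periodic, and a transitive system all of whose points are almost periodic is minimal. Your proximal successor step and limit step are fine as sketched.
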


Now we are ready to prove Theorem~\ref{thm:main-resut1}(2).

\begin{proof}[Proof of Theorem~\ref{thm:main-resut1}(2)]
We only need to prove that for a minimal system $(X,G)$ with 
a dense set of minimal points in the product system $(X\times X,G)$,
if $(X,G)$ is pairwise central$^*$-equicontinuous then it is distal.
Let $(X,G)$ be a pairwise central$^*$-equicontinuous system. 
First by Proposition~\ref{prop:non-PI-pairwise-cen-sen}, $(X,G)$ is a PI system. 
As the intersection of two central$^*$-sets is still a central$^*$-set,
the product system $(X\times X,G)$ is pairwise central$^*$-equicontinuous.
As $(X\times X,G)$ has a dense set of minimal points, it is clear that every minimal point is central recurrent, then by Lemma~\ref{lem:central*-eq-limit-distal}, every point in $(X\times X,G)$ is central recurrent.
Fix any point $(x,y)\in X\times X$. 
Then $(\overline{G(x,y)},G)$ is a transitive system.
For every nonempty open subset $U$ of $\overline{G(x,y)}$, 
pick a nonempty open subset $V$ of $U$ with $\overline{V}\subset U$ and a point $(a,b)\in V$. As $(a,b)$ is central recurrent,
$N((a,b),V)$ is a central set. 
By Proposition~\ref{prop:central-chara}, there exists a minimal point in $\overline{V}$. Then $(\overline{G(x,y)},G)$ has a dense set of minimal points. Now by Theorem~\ref{thm:PI-sys-AI-sys}, $\overline{G(x,y)}$ is minimal. In particular $(x,y)$ is a minimal point. According to Lemma~\ref{lem:distal-prod-pointwise-minimal}, $(X,G)$ is distal.
\end{proof}

\section{\texorpdfstring{Pairwise FIP$^*$-equicontinuity and systems of order $\infty$}{Pairwise FIP*-equicontinuity and systems of order infty}}

In this section we focus on systems of order $\infty$. We aim to characterize minimal systems of order $\infty$ via a weak equicontinuity: pairwise FIP$^*$-equicontinuity, and study the local property of this kind of weak equicontinuity.
In Proposition~\ref{prop:supp-FIP=subset} we characterize FIP-set by a return time set, which may be of interest independently.

\subsection{\texorpdfstring{Systems of order $\infty$}{Systems of order infty}}
Let $(X,G)$ be a minimal system and let $k\ge 1$ be an integer. A pair $(x, y) \in X\times X$
is said to be \emph{regionally proximal of order $k$}
if for any $\delta
> 0$, there exist $x', y'\in X$ and a
sequence $\{p_i\}_{i=1}^k$ in $G$
such that $d(x, x') < \delta$, $d(y, y') <\delta$, and 
\[
d(gx', gy') < \delta\ \text{for
any}\ g\in FP(\{p_i\}_{i=1}^k).
\]
The \emph{regionally proximal relation of order $k$}, denoted by $\RP^{[k]}(X)$, is the collection of all regionally proximal pairs of order $k$.
It is clear that
\[
 P(X,G)\subset \dotsb \subset\RP^{[k+1]}(X,G)\subset
\RP^{[k]}(X,G)\subset \dotsb \subset \RP^{[2]}(X,G)\subset \RP^{[1]}(X,G).
\]

When the action group is abelian, we have the following results on the regionally proximal relation of order $k$ for minimal system.
\begin{thm}[{\cite[Theorem 7.7]{SY2012}}] \label{thm:RPd-icer}
If $(X,G)$ is minimal system with $G$ being abelian, then for every $k\in\mathbb{N}$, $\RP^{[k]}(X,G)$ is a $G$-invariant closed equivalence relation.
\end{thm}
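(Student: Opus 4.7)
The plan is to verify reflexivity, symmetry, closedness, and $G$-invariance directly from the definition, and then to establish transitivity, which is the main difficulty. Reflexivity is immediate (take $x' = y' = x$), symmetry is built into the definition, and closedness follows from a routine triangle-inequality argument: given $(x_n, y_n) \in \RP^{[k]}(X,G)$ converging to $(x,y)$ and $\delta > 0$, pick $n$ with $d(x_n, x), d(y_n, y) < \delta/2$ and apply the definition at $(x_n, y_n)$ with parameter $\delta/2$; the resulting witnesses then serve $(x,y)$ at parameter $\delta$.

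For $G$-invariance, fix $h \in G$ and suppose $(x,y) \in \RP^{[k]}(X,G)$. Given $\delta > 0$, uniform continuity of $h$ yields $\delta' \in (0,\delta)$ such that $d(u,v) < \delta'$ implies $d(hu, hv) < \delta$. Applying the definition of $\RP^{[k]}$ at $(x,y)$ with parameter $\delta'$ produces $x', y' \in X$ and $\{p_i\}_{i=1}^k$ in $G$ with $d(x, x'), d(y, y') < \delta'$ and $d(gx', gy') < \delta'$ for every $g \in FP(\{p_i\}_{i=1}^k)$. Setting $x^* := hx'$ and $y^* := hy'$, one has $d(hx, x^*), d(hy, y^*) < \delta$, and for any such $g$, commutativity of $G$ gives $gx^* = ghx' = hgx'$ and $gy^* = hgy'$, so $d(gx^*, gy^*) = d(h(gx'), h(gy')) < \delta$. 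Hence $(hx, hy) \in \RP^{[k]}(X,G)$. Abelian-ness is used precisely at the step $gh = hg$.

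The main obstacle is transitivity. Here the plan is to adopt the \emph{dynamical cube} framework: introduce $\mathbf{Q}^{[k]}(X) \subset X^{2^k}$ as the orbit closure of the diagonal under the face group $\mathcal{F}^{[k]}$ generated by the coordinate-wise $G$-actions indexed by the combinatorial faces of $\{0,1\}^k$. A diagonal-compactness argument reformulates $\RP^{[k]}(X,G)$ as the relation consisting of pairs $(x, y)$ that can be completed to a configuration in $\mathbf{Q}^{[k]}(X)$ with $x$ and $y$ placed at two prescribed corners. With this reformulation, transitivity becomes a gluing problem: given cubes in $\mathbf{Q}^{[k]}(X)$ that witness $(x, y)$ and $(y, z)$ respectively, one concatenates them along their common $y$-face to produce a cube witnessing $(x, z)$. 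Making the gluing rigorous requires two nontrivial ingredients: a lifting lemma that allows one to adjust a given cube so that a designated face agrees with any prescribed orbit point (where minimality of $(X,G)$ is essential), and the fact that the face transformations in $\mathcal{F}^{[k]}$ satisfy the commutation relations needed for the concatenation, which crucially requires $G$ to be abelian. Carrying out these two steps is where the bulk of the technical work lies, and is executed in detail in~\cite{SY2012}.
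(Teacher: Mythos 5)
First, note that the paper does not prove this statement: it is quoted directly from \cite{SY2012} (Theorem 7.7 there), so there is no internal argument of the paper to compare yours against. Your verification of reflexivity, symmetry, closedness and $G$-invariance is correct, and these are exactly the routine parts. One small remark: abelianness is not actually where the hypothesis earns its keep in the invariance step, since for a general group one can replace the witnessing sequence $\{p_i\}_{i=1}^k$ by the conjugates $\{hp_ih^{-1}\}_{i=1}^k$ and run the same computation; the commutativity (and the minimality) are really consumed in the transitivity argument.

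That is also where your proposal has a genuine gap: for transitivity you give a plan, not a proof. The entire content of the theorem is concentrated in the two ``nontrivial ingredients'' you name --- the identification of $\RP^{[k]}(X,G)$ with a corner condition on a cube space (which, for the record, is the condition $(x,y,\dotsc,y)\in \mathbf{Q}^{[k+1]}(X)$, one dimension higher than the $\mathbf{Q}^{[k]}(X)$ you write), and the lifting/gluing argument --- and you explicitly defer both to \cite{SY2012}. Moreover, the gluing step is not just a matter of checking commutation relations among face transformations: the Shao--Ye argument requires the minimality of the face-group action on the cube space and enveloping-semigroup (Ellis group) techniques to adjust a cube so that a prescribed face matches, and this is precisely where minimality of $(X,G)$ and the group hypotheses enter. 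As written, your argument establishes only that $\RP^{[k]}(X,G)$ is a closed, reflexive, symmetric, $G$-invariant relation --- which is elementary and needs essentially no hypotheses --- and cites the reference for the one assertion (transitivity) that the theorem is actually about. Deferring to \cite{SY2012} is legitimate, and is what the paper itself does, but it means the proposal does not constitute a proof of the statement.
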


\begin{thm}[{\cite[Proposition 8.15]{HSY2016}}] \label{thm:RPd-fip-d+1}
Let $(X,G)$ be a minimal system with $G$ being abelian and $k\in\mathbb{N}$.
Then a pair $(x, y) \in \RP^{[k]}(X,G)$ if and only if for any neighborhood $U$ of $y$ there exists a
sequence $\{p_i\}_{i=1}^{k+1}$ in $G$
such that $FP(\{p_i\}_{i=1}^{k+1})\subset N(x,U)$.
\end{thm}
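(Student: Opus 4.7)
The backward direction is essentially a calculation exploiting the abelian hypothesis. Fix $\delta>0$ and set $U=B(y,\delta/2)$; by assumption pick $\{p_i\}_{i=1}^{k+1}\subset G$ with $FP(\{p_i\}_{i=1}^{k+1})\subset N(x,U)$. Set $x'=x$ and $y'=p_1 x$, so $d(y,y')<\delta/2$. For every nonempty $\alpha\subset\{2,\dots,k+1\}$ and $g=\prod_{i\in\alpha}p_i$, the abelianness of $G$ gives $gy'=\bigl(\prod_{i\in\{1\}\cup\alpha}p_i\bigr)x\in U$ while $gx'=gx\in U$, so $d(gx',gy')<\delta$. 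Hence the length-$k$ sequence $\{p_2,\dots,p_{k+1}\}$ witnesses $(x,y)\in\RP^{[k]}(X,G)$.

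For the forward direction I would pass to the enveloping semigroup $E=E(X,G)$ and re-express the property in terms of minimal idempotents. Using Theorem~\ref{thm:RPd-icer} the relation $\RP^{[k]}$ is a $G$-invariant closed equivalence relation, so I may work inside its equivalence class through $x$. The approach is to produce minimal idempotents $u_1,\dots,u_{k+1}\in E$ such that for every nonempty $\alpha\subset\{1,\dots,k+1\}$ one has $(\prod_{i\in\alpha}u_i)x=y$. Once these idempotents are in hand the conclusion follows by density: pick nets $p_i^{(\lambda)}\in G$ converging to $u_i$ in $E$; by continuity of the evaluation map $E\to X$ and compactness, a cofinal choice of $\lambda$ forces all $2^{k+1}-1$ products of the selected $p_i^{(\lambda)}$ to send $x$ into the prescribed neighborhood $U$ of $y$ simultaneously.

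The main obstacle is manufacturing the $k+1$ idempotents themselves from the assumption $(x,y)\in\RP^{[k]}(X,G)$. My plan is an induction on $k$ via the dynamical cube system $Q^{[k+1]}(X)$ of side $k+1$: by the regionally proximal hypothesis the point $(x,y,x,\dots,x)$ lies in the orbit closure of the diagonal inside $Q^{[k+1]}(X)$, and under the abelian hypothesis (which allows us to invoke the cube structure developed in~\cite{SY2012} and~\cite{HSY2016}) this orbit closure has a dense set of minimal points. Choosing a minimal point sufficiently close and applying Theorem~\ref{thm:proximal} inside the cube yields minimal idempotents in $E$ whose sub-products, acting on $x$, realise all the required cube coordinates; specialising these coordinates to $y$ gives the idempotents $u_1,\dots,u_{k+1}$. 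The bookkeeping of which sub-products correspond to which cube vertex is the delicate point, and is precisely where the step from $k$ to $k+1$ (rather than $k$ to $k$) in the statement originates.
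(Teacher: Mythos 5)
The paper does not actually prove this statement: it is imported verbatim from \cite[Proposition 8.15]{HSY2016}, so there is no in-paper argument to compare against and I assess your proposal on its own terms. Your backward direction is correct and is the standard computation: taking $x'=x$, $y'=p_1x$ and the length-$k$ sequence $p_2,\dots,p_{k+1}$ works, and the abelian hypothesis is genuinely needed to replace $gp_1$ by $p_1g\in FP(\{p_i\}_{i=1}^{k+1})$.

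The forward direction, however, rests on a false intermediate claim. If $u_1$ is an idempotent of $E(X,G)$ with $u_1x=y$, then $u_1y=u_1(u_1x)=u_1x=y$, so $u_1x=u_1y$ and $(x,y)$ is proximal. Hence the idempotents $u_1,\dots,u_{k+1}$ you propose to manufacture can exist only when $(x,y)\in P(X,G)$, whereas $\RP^{[k]}(X,G)$ is in general strictly larger than the proximal relation: for the minimal distal skew product $(x,y)\mapsto(x+\alpha,x+y)$ on $\mathbb{T}^2$ from the introduction, $\RP^{[1]}$ is nontrivial (the system is not equicontinuous), while $E(X,G)$ is a group whose only idempotent acts as the identity, so $P(X,G)$ is the diagonal. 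For such pairs your construction produces nothing, and invoking Theorem~\ref{thm:proximal} inside the cube can only ever return proximal data. The correct mechanism, which is what \cite{HSY2016} (building on \cite{SY2012}) actually uses, is an \emph{approximate} statement rather than an exact one: $(x,y)\in\RP^{[k]}(X,G)$ if and only if the point $(x,y,y,\dots,y)\in X^{2^{k+1}}$ lies in the closure of the orbit of the diagonal point $(x,x,\dots,x)$ under the face group of the cube $Q^{[k+1]}$ fixing the first coordinate; a face-group element carrying the diagonal point into the $U$-neighborhood of $(x,y,\dots,y)$ in every coordinate is precisely a tuple $p_1,\dots,p_{k+1}$ with $FP(\{p_i\}_{i=1}^{k+1})\subset N(x,U)$, and no exact realization of $y$ is required. (A secondary defect: even granted the $u_i$, multiplication in $E(X,G)$ is not jointly continuous, so nets $p_i^{(\lambda)}\to u_i$ do not force $\prod_i p_i^{(\lambda)}\to\prod_i u_i$; the approximating elements would have to be chosen successively, one variable at a time.)
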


Let $\RP^{[\infty]}(X,G)=\underset{k=1}{\overset{\infty}{\bigcap}}\RP^{[k]}(X,G)$. Following from Theorems~\ref{thm:RPd-icer} and~\ref{thm:RPd-fip-d+1}, one has

\begin{thm}\label{thm:PR-infty}
    Let $(X,G)$ be a minimal system with $G$ being abelian. Then 
    \begin{enumerate}
        \item $\RP^{[\infty]}(X,G)$ is a $G$-invariant closed equivalence relation;
        \item a pair $(x, y) \in \RP^{[\infty]}(X,G)$ if and only if for any neighborhood $U$ of $y$, $N(x,U)$ is an FIP-set.
    \end{enumerate}
\end{thm}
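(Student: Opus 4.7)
The plan is to derive both parts as essentially immediate corollaries of the theorems just quoted, since all the hard structural content sits inside Theorem~\ref{thm:RPd-icer} (Shao--Ye) and Theorem~\ref{thm:RPd-fip-d+1} (Huang--Shao--Ye).

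For part (1), I would argue that the class of $G$-invariant closed equivalence relations on $X$ is closed under arbitrary intersection: closedness passes to intersections (an intersection of closed sets is closed), $G$-invariance passes to intersections (if each factor satisfies $gR_k \subset R_k$ then so does the intersection), and reflexivity, symmetry, and transitivity of a binary relation are each preserved under taking intersections. Since each $\RP^{[k]}(X,G)$ has all of these properties by Theorem~\ref{thm:RPd-icer}, the intersection $\RP^{[\infty]}(X,G)=\bigcap_{k=1}^\infty \RP^{[k]}(X,G)$ inherits them.

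For part (2), I would translate Theorem~\ref{thm:RPd-fip-d+1} into the language of FIP-sets. For the forward direction, assume $(x,y)\in \RP^{[\infty]}(X,G)$ and fix a neighborhood $U$ of $y$. Then for every $k\in\bbn$ we have $(x,y)\in \RP^{[k]}(X,G)$, so Theorem~\ref{thm:RPd-fip-d+1} yields a sequence $\{p_i^{(k)}\}_{i=1}^{k+1}$ in $G$ with $FP(\{p_i^{(k)}\}_{i=1}^{k+1})\subset N(x,U)$. This is precisely the defining property of an FIP-set (as $k$ ranges through $\bbn$, the lengths $k+1$ cover all integers $\geq 2$, and the case of length $1$ is trivial by taking any $p_1\in N(x,U)$, which is nonempty since $x$ accumulates into $U$ by minimality). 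For the reverse direction, suppose $N(x,U)$ is an FIP-set for every neighborhood $U$ of $y$. Fix $k\in\bbn$ and a neighborhood $U$ of $y$; the FIP property directly supplies a sequence $\{p_i\}_{i=1}^{k+1}$ in $G$ with $FP(\{p_i\}_{i=1}^{k+1})\subset N(x,U)$, so Theorem~\ref{thm:RPd-fip-d+1} gives $(x,y)\in \RP^{[k]}(X,G)$. Since this holds for every $k$, we conclude $(x,y)\in \RP^{[\infty]}(X,G)$.

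I do not expect any serious obstacle here; the proof is purely bookkeeping, combining Theorem~\ref{thm:RPd-icer} and Theorem~\ref{thm:RPd-fip-d+1} with the definitions of $\RP^{[\infty]}$ and FIP-set. The only minor point worth attention is the indexing shift between the ``$k+1$ terms'' in Theorem~\ref{thm:RPd-fip-d+1} and the ``for every $k$'' in the definition of FIP-set, but this is harmless because both quantify $k$ over all of $\bbn$.
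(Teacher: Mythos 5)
Your proposal is correct and follows exactly the route the paper intends: the paper gives no written proof, simply asserting that the theorem ``follows from'' Theorems~\ref{thm:RPd-icer} and~\ref{thm:RPd-fip-d+1}, and your argument supplies precisely the routine bookkeeping (intersections preserve closed $G$-invariant equivalence relations; the length-$(k+1)$ finite-product sets for all $k$ are exactly the FIP property) that this assertion presupposes. The only cosmetic point is that in the reverse direction of (2) you should quantify over all neighborhoods $U$ of $y$ before invoking Theorem~\ref{thm:RPd-fip-d+1}, since its hypothesis is ``for any neighborhood $U$,'' but this is immediate from your assumption.
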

Following \cite{DSMSY13} and \cite{GGY18}, we say that a dynamical system $(X,G)$ is a \emph{system of order $\infty$} if $\RP^{[\infty]}(X,G)$ is trivial, i.e. it coincides with the diagonal of $X\times X$.
If  $(X,G)$ is a  minimal system with $G$ being abelian, by Theorem~\ref{thm:PR-infty} $X/\RP^{[{\infty}]}(X,G)$ is the maximal factor of order $\infty$.

A point $x\in X$ is called \emph{$\infty$-step almost automorphic} if $\RP^{[\infty]}(X,G)[x]=\{x\}$.
It is clear that $(X,G)$ is a system of order $\infty$ if and only if
every point in $(X,G)$ is $\infty$-step almost automorphic.
The following characterization of $\infty$-step almost automorphic points was proved in \cite[Theorem 8.1.7]{HSY2016}, see also \cite[Theorem~0.2]{BL18}.

\begin{thm}
\label{thm:AA-fip-star-rec}
	Let $(X,G)$ be a minimal system with $G$ being abelian.
	Then a point $x\in X$ is an $\infty$-step
	almost automorphic point if and only if it is FIP$^*$-recurrent.
\end{thm}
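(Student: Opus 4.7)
\medskip

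\noindent\textbf{Proof plan.}
The plan is to route everything through Theorem~\ref{thm:PR-infty}(2), which characterizes $(x,y)\in\RP^{[\infty]}(X,G)$ by the condition that $N(x,V)$ is an FIP-set for every neighborhood $V$ of $y$. Being $\infty$-step almost automorphic ($\RP^{[\infty]}[x]=\{x\}$) therefore translates directly into a statement about which $y\in X$ can have $N(x,V)$ be FIP for neighborhoods $V$ close to $y$, and this dovetails with the definition of FIP$^*$-recurrence via the Ramsey property.

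For the easy direction, assume $x$ is FIP$^*$-recurrent and suppose toward a contradiction that there exists $y\in\RP^{[\infty]}[x]$ with $y\neq x$. Pick disjoint neighborhoods $U$ of $x$ and $V$ of $y$. By Theorem~\ref{thm:PR-infty}(2), $N(x,V)$ is an FIP-set, and since $U\cap V=\emptyset$ we have $N(x,U)\cap N(x,V)=\emptyset$, so $N(x,U)$ fails to be FIP$^*$; this contradicts FIP$^*$-recurrence at $x$.

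For the hard direction, assume $\RP^{[\infty]}[x]=\{x\}$ and suppose for contradiction that some neighborhood $U$ of $x$ makes $N(x,U)$ fail to be FIP$^*$. Then there is an FIP-set $A\subset G$ with $A\cap N(x,U)=\emptyset$, i.e.\ $Ax\subset X\setminus U$. The main work is then to distill from $A$ a single point $y\in X\setminus U$ with $(x,y)\in\RP^{[\infty]}$, which will contradict $\RP^{[\infty]}[x]=\{x\}$. I will do this by an inductive nested-ball construction driven by the Ramsey property of FIP-sets (Theorem~\ref{thm:fip-set-Ramsey}): set $A_0:=A$ and $C_0:=X\setminus U$; having chosen an FIP-set $A_k$ and a closed set $C_k\subset X\setminus U$ with $A_k x\subset C_k$, cover $C_k$ by finitely many closed balls $D_1,\dotsc,D_{n}$ of diameter at most $1/(k+1)$, partition $A_k=\bigcup_{i=1}^n\{g\in A_k\colon gx\in D_i\}$, and by Theorem~\ref{thm:fip-set-Ramsey} extract an FIP piece $A_{k+1}\subset\{g\in A_k\colon gx\in D_{i_0}\}$; set $C_{k+1}:=D_{i_0}$. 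Let $y$ be the unique point in $\bigcap_k C_k$ (which lies in the closed set $X\setminus U$). For any neighborhood $V$ of $y$, eventually $C_k\subset V$, hence $A_k\subset N(x,V)$, so $N(x,V)$ is an FIP-set; by Theorem~\ref{thm:PR-infty}(2), $(x,y)\in\RP^{[\infty]}$, with $y\neq x$ because $y\in X\setminus U$, contradicting $\infty$-step almost automorphicity.

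The main obstacle is producing the point $y$ in the forward direction with the right FIP-set structure at every neighborhood simultaneously. A naive cluster-point argument does not suffice, because being a cluster point of $Ax$ gives no uniformity across neighborhoods of $y$. The inductive refinement above is what delivers a nested family of FIP-sets $A_k$ each witnessing the FIP-property of $N(x,V)$ for shrinking $V$, and the Ramsey property of FIP-sets (not merely of IP-sets) is essential at each inductive step. Abelianness of $G$ enters only through the invocation of Theorem~\ref{thm:PR-infty}(2) in both directions.
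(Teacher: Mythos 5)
Your proposal is correct, but there is no in-paper proof to compare it with: the paper does not prove Theorem~\ref{thm:AA-fip-star-rec}, it quotes it from \cite[Theorem~8.1.7]{HSY2016} (see also \cite[Theorem~0.2]{BL18}). What you have effectively done is reconstruct a self-contained proof from the other results the paper does state, namely Theorem~\ref{thm:PR-infty}(2) (which the paper derives from the cited Theorems~\ref{thm:RPd-icer} and~\ref{thm:RPd-fip-d+1}) and the Ramsey property of FIP-sets (Theorem~\ref{thm:fip-set-Ramsey}); this is non-circular within the paper's framework and is essentially the argument of the original sources. Your easy direction is exactly right: for $y\neq x$ in $\RP^{[\infty]}(X,G)[x]$, disjoint neighborhoods $U\ni x$ and $V\ni y$ give $N(x,U)\cap N(x,V)=\emptyset$ while $N(x,V)$ is an FIP-set, contradicting FIP$^*$-recurrence. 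Your hard direction (Ramsey refinement over covers by small closed balls, producing nested FIP-sets $A_k$ with $A_kx\subset C_k$ and $\diam C_k\to 0$) is also sound, and you correctly identify the crux: one needs a single point $y$ witnessing the FIP property of $N(x,V)$ for \emph{all} neighborhoods $V$ simultaneously, which a bare cluster point of $Ax$ does not provide.

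Two small repairs are needed in the hard direction. First, as written, $C_{k+1}:=D_{i_0}$ need be neither contained in $C_k$ nor in $X\setminus U$, so the nonemptiness of $\bigcap_k C_k$ and the claim $y\in X\setminus U$ do not follow literally. Either set $C_{k+1}:=D_{i_0}\cap C_k$ (still closed, of diameter at most $1/(k+1)$, contains $A_{k+1}x$, and keeps the family nested inside $X\setminus U$), or keep your definition and observe that the $A_k$ \emph{are} nested, so choosing $g_k\in A_k$ gives $g_{k'}x\in C_k$ for all $k'\geq k$; any limit point of $(g_kx)_k$ then lies in every $C_k$ and in the closed set $\overline{Ax}\subset X\setminus U$. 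Second, your ``partition'' of $A_k$ is only a cover, since the balls $D_i$ may overlap; this is harmless because the two-piece Ramsey property of Theorem~\ref{thm:fip-set-Ramsey} extends to finite covers by induction, but it should be said. With these one-line fixes the argument is complete.
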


\subsection{\texorpdfstring{Pairwise FIP$^*$-equicontinuity}{Pairwise FIP*-equicontinuity}}
Let $(X,G)$ be a dynamical system.
A point $x$ in $X$ is called \emph{pairwise FIP$^*$-equicontinuous} if for any $\eps>0$ there exists a neighbourhood $U$ of $x$ such that for any $y,z\in U$, $\{g\in G\colon d(gy, gz)<\eps\}$ is an FIP$^*$-set. 
Denote by $\eq^{FIP^*}(X,G)$ the collection of all pairwise FIP$^*$-equicontinuous points in $X$.
A dynamical system $(X,G)$ is called \emph{pairwise FIP$^*$-equicontinuous} if $\eq^{FIP^*}(X,G)=X$.

We will prove Theorem~\ref{thm:main-resut2} in this subsection, first we need the following result which is implied in~\cite{Gillis}, see \cite[Proposition 5.8]{HLY11}  for a proof of this version.

\begin{prop}\label{prop:measure-skill}
Let $(X,\mathcal{B},\mu)$ be a probability space and
$\{E_i\}_{i=1}^\infty$ be a sequence in $\mathcal{B}$ with $\mu(E_i)\geq a>0$ for some constant $a$ and any $i\in\mathbb{N}$.
Then for any $k\geq 1$ and $\eps>0$ there is $N=N(a,k,\eps)\in\mathbb{N}$ such that for any strictly increase sequence $\{s_i\}_{i=1}^n$ in $\mathbb{N}$ with $n\geq N$, there exist
$1\leq t_1<t_2<\dotsb<t_k\leq n$ with 
\[
\mu\bigl(E_{s_{t_1}}\cap E_{s_{t_2}}\cap\dotsb\cap E_{s_{t_k}}\bigr)\geq a^k-\eps.
\]
\end{prop}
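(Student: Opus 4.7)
The plan is to prove the proposition by an $L^k$-averaging argument, which is essentially the classical Gillis inequality. I would begin by setting
\[
f := \frac{1}{n}\sum_{i=1}^n \mathbf{1}_{E_{s_i}}
\]
on the probability space $(X,\mathcal{B},\mu)$. Since each $\mu(E_{s_i}) \geq a$, one has $\int f\, d\mu \geq a$, and because $\mu$ is a probability measure Jensen's inequality applied to the convex function $t \mapsto t^k$ yields $\int f^k\, d\mu \geq \bigl(\int f\, d\mu\bigr)^k \geq a^k$.

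Next I would expand the $k$-th moment:
\[
a^k \leq \int f^k\, d\mu = \frac{1}{n^k}\sum_{i_1,\ldots,i_k = 1}^{n} \mu\bigl(E_{s_{i_1}} \cap \cdots \cap E_{s_{i_k}}\bigr),
\]
and then separate the tuples with pairwise distinct entries from those in which some index is repeated. The number of non-distinct tuples is $n^k - n(n-1)\cdots(n-k+1) = O(n^{k-1})$ and each summand is bounded by $1$, so their total contribution to the normalised sum is at most $C_k/n$ for a constant $C_k$ depending only on $k$. Dividing the remaining sum over ordered distinct tuples by their number $n(n-1)\cdots(n-k+1)$, the average of $\mu\bigl(E_{s_{i_1}} \cap \cdots \cap E_{s_{i_k}}\bigr)$ over such tuples is at least
\[
\frac{n^k}{n(n-1)\cdots(n-k+1)}\Bigl(a^k - \frac{C_k}{n}\Bigr),
\]
which tends to $a^k$ as $n \to \infty$. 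Consequently, there exists $N = N(a,k,\eps)$ such that whenever $n \geq N$ this average exceeds $a^k - \eps$, forcing at least one ordered distinct tuple to realise $\mu(\cdot) \geq a^k - \eps$. Because the intersection is symmetric in its arguments, relabelling in increasing order yields the desired indices $t_1 < t_2 < \cdots < t_k$.

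The argument is entirely routine and I do not anticipate a serious obstacle; the only point requiring a little care is a clean explicit bound on $N(a,k,\eps)$, which comes out of controlling the two correction terms (the diagonal contribution $C_k/n$ and the ratio $n^k/[n(n-1)\cdots(n-k+1)]-1$) and amounts to elementary bookkeeping. As an alternative I could proceed by induction on $k$, using the $k=1$ case as base and applying the Cauchy--Schwarz version of the same averaging trick to pass from $k$ to $k+1$; but the direct moment computation above is cleaner and produces a uniform $N$ in one stroke.
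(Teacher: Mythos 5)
Your argument is correct: the Jensen bound $\int f^k\,d\mu\ge a^k$, the expansion of the $k$-th moment, and the observation that the repeated-index tuples contribute at most $\bigl(1-\prod_{j=0}^{k-1}(1-j/n)\bigr)\le k(k-1)/(2n)$ to the normalised sum together give a valid choice $N=\lceil k(k-1)/(2\eps)\rceil$, after which averaging over the distinct ordered tuples and symmetrising yields the desired $t_1<\dotsb<t_k$. The paper does not prove this proposition itself but only cites Gillis and \cite[Proposition 5.8]{HLY11}; your moment-expansion argument is exactly the classical proof behind those references, so there is nothing to flag.
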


\begin{prop}\label{prop:supp-FIP=subset}
	If a dynamical system  $(X,G)$ admits an invariant measure with full support,
	then for any FIP-set $F$ and nonempty open subset $U$ of $X$, there exists
	an FIP-subset $F^{\prime}$ of $F$ and a point $z\in U$ such that $F^{\prime}\subset N(z,U)$.
\end{prop}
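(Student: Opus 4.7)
The plan is a Baire-category argument. For each $K\in\bbn$, set
\[
G_K := \{z\in U : \text{there exists } (q_1,\ldots,q_K)\in G^K \text{ with } FP(\{q_j\}_{j=1}^K) \subset F\cap N(z,U)\}.
\]
I will show each $G_K$ is open and dense in $U$. Since $U$ is an open subset of the compact metric space $X$, it is a Baire space, so $\bigcap_{K\ge 1} G_K$ will be a dense $G_\delta$ subset of $U$, hence nonempty. Picking any $z$ in this intersection, and for each $K$ a witness $(q_1^{(K)},\ldots,q_K^{(K)})$, the set $F':=\bigcup_{K\ge 1}FP(\{q_j^{(K)}\}_{j=1}^K)$ will satisfy $F'\subset F$ and $F'\subset N(z,U)$, and is itself FIP because for every $m$ the sequence $(q_1^{(m)},\ldots,q_m^{(m)})$ provides a length-$m$ finite product structure inside $F'$.

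Openness of $G_K$ is immediate: for each fixed tuple $(q_1,\ldots,q_K)$ the set $\bigcap_{\emptyset\ne \beta\subset \{1,\ldots,K\}} q_\beta^{-1}U$ is open in $X$, and $G_K$ is the union of these open sets over admissible tuples. For density, fix a nonempty open $W\subset U$; by full support, $\mu(W)>0$. Using the FIP property of $F$, I choose $N$ large (depending on $K$ and $\mu(W)$) and a sequence $\{p_i^{(N)}\}_{i=1}^N$ with $FP(\{p_i^{(N)}\}_{i=1}^N)\subset F$; write $Q_r=p_1^{(N)}p_2^{(N)}\cdots p_r^{(N)}$. I build inductively indices $0=s_0<s_1<\ldots<s_{2K}\le N$, elements $q_j:=p_{s_{2j-1}+1}^{(N)}\cdots p_{s_{2j}}^{(N)}=Q_{s_{2j}}Q_{s_{2j-1}}^{-1}$ for $j=1,\ldots,K$, and a decreasing chain of positive-measure measurable sets $W=W^{(0)}\supset W^{(1)}\supset\ldots\supset W^{(K)}$ characterized by
\[
W^{(j)}=\{z\in W : q_\beta z\in W \text{ for every nonempty } \beta\subset \{1,\ldots,j\}\}.
\]
At the inductive step, apply Proposition~\ref{prop:measure-skill} with $k=2$ and $\eps=\mu(W^{(j)})^2/2$ to the events $E_r:=Q_r^{-1}W^{(j)}$ (each of measure $\mu(W^{(j)})$ by invariance of $\mu$) restricted to the range $r> s_{2j}$: for $N-s_{2j}$ sufficiently large this yields $s_{2j}<s_{2j+1}<s_{2j+2}\le N$ with $\mu(E_{s_{2j+1}}\cap E_{s_{2j+2}})>0$, and pushing forward by the measure-preserving $Q_{s_{2j+1}}$ gives $\mu(W^{(j+1)})=\mu(W^{(j)}\cap q_{j+1}^{-1}W^{(j)})>0$. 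Because the index blocks $(s_{2j-1},s_{2j}]$ are disjoint and strictly increasing in $j$, every $q_\beta$ unwinds as a product of $p_i^{(N)}$'s in increasing index order and therefore lies in $FP(\{p_i^{(N)}\})\subset F$. Any $z_0\in W^{(K)}$ then lies in $W$ and satisfies $q_\beta z_0\in W\subset U$ for every nonempty $\beta$, so $z_0\in G_K\cap W$.

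The main obstacle anticipated is precisely the absence of a single infinite sequence in $F$ from which all the $q_j$ could be drawn simultaneously: for different $K$ one is forced to use different base sequences $\{p_i^{(N)}\}$, so the inductive scheme of Lemma~\ref{lem:supp-IP=subset} cannot directly produce a common witness point. The role of the Baire argument is to bypass this; the measure-theoretic construction above delivers that each $G_K$ is open and \emph{dense} in $U$, and then the Baire intersection picks a single $z\in U$ whose per-$K$ witnesses can be assembled into $F'$.
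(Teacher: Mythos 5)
Your proof is correct, and its measure-theoretic engine is the same as the paper's, but the final assembly is genuinely different. The paper first proves a quantitative Claim: for any Borel set $E$ with $\mu(E)\geq a$ and any sufficiently long finite sequence whose finite products lie in $F$, one can extract a length-$n$ sub-product structure all of whose elements return a subset of $E$ of measure at least $c_n$ to $E$ (with $c_{i+1}=\tfrac12 c_i^2$). It then builds a single decreasing chain of open sets $\overline{V_{k+1}}\subset V_k\cap\bigcap_{g\in FP(\{q_i^{(k)}\})}g^{-1}V_k$ and extracts $z$ by compactness of the nested closures. So the ``obstacle'' you anticipate (no single infinite base sequence in $F$) is handled in the paper not by Baire category but simply by invoking the finite Claim with a fresh finite block $\{p_i^{(k_{n+1})}\}$ of $F$ at each stage of the nesting. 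Your route instead establishes that each $G_K$ is open and dense in $U$ and then applies Baire; your density argument is precisely the paper's Claim specialized to $E=W$, including the recursion $a\mapsto a^2/2$ and the repeated use of Proposition~\ref{prop:measure-skill} with $k=2$, and your identity $W^{(j+1)}=W^{(j)}\cap q_{j+1}^{-1}W^{(j)}$ is valid because $j+1$ is the largest index in the increasing-order products. What your packaging buys is a cleaner separation of the finite combinatorial step from the limiting step and the (slightly stronger) conclusion that the set of good points $z$ is residual in $U$; the paper's chain argument is marginally more economical in that it never needs openness of the sets $G_K$.

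Two small slips, neither fatal here: $p_{s_{2j-1}+1}^{(N)}\cdots p_{s_{2j}}^{(N)}$ equals $Q_{s_{2j-1}}^{-1}Q_{s_{2j}}$, not $Q_{s_{2j}}Q_{s_{2j-1}}^{-1}$; and pushing $E_{s_{2j+1}}\cap E_{s_{2j+2}}$ forward by the measure-preserving $Q_{s_{2j+1}}$ yields $\mu\bigl(W^{(j)}\cap(Q_{s_{2j+1}}q_{j+1}Q_{s_{2j+1}}^{-1})^{-1}W^{(j)}\bigr)>0$, i.e.\ positivity for a conjugate of $q_{j+1}$ rather than for $q_{j+1}$ itself. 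For abelian $G$ --- the only setting in which the paper applies this proposition (Theorem~\ref{thm:min-FIP-eq-point-aa}) --- this is immaterial, and the paper's own Claim makes the identical move, so this is a shared wrinkle of the non-abelian case rather than a gap specific to your argument.
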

\begin{proof}
Let $\mu$ be a $G$-invariant measure with full support.
We first prove the following claim.

\smallskip 
\noindent \textbf{Claim:} For every $n\in\bbn$ and Borel subset $E$ of $X$ with $\mu(E)\geq a>0$ for some constant $a$.
There exists $k=k(a,n)\in\mathbb{N}$ such that for any sequence $\{p_i\}_{i=1}^k$ in $G$ there exists  
a sequence $\{q_i\}_{i=1}^n$ in $G$ such that 
$FP(\{q_i\}_{i=1}^n)\subset FP(\{p_i\}_{i=1}^{k})$ 
and 
\[
\mu\biggl(E\cap \bigcap_{g\in FP(\{q_i\}_{i=1}^n)}g^{-1}E\biggr)\geq c_n,
\]
where $c_1=\frac{1}{2}a^2$ and $c_{i+1} =\frac{1}{2} c_i^2$ for $i\in\mathbb{N}$.
\begin{proof}[Proof of Claim]
Let $E$ be a Borel subset $E$ of $X$ with $\mu(E)\geq a$.
We prove the Claim by induction on $n$. 

For $n=1$, let $k=k(a,1)=N(a,2,\frac{1}{2}a^2)$ as in Proposition~\ref{prop:measure-skill}.
For any sequence $\{p_i\}_{i=1}^k$ in  $G$,
as $\mu$ is $G$-invariant, $\mu((\prod_{i=1}^j p_i)^{-1}E)=\mu(E)\geq a$ for all $j=1,\dotsc,k$.
By Proposition~\ref{prop:measure-skill} there exists $1\leq j_1<j_2\leq k$ such that 
$\mu((\prod_{i=1}^{j_1} p_i)^{-1} (E) \cap (\prod_{i=1}^{j_2} p_i)^{-1}(E))\geq \frac{1}{2}a^2$.
Let $g= \prod_{i=j_1+1}^{j_2} p_i$. Then
\[
\mu(E\cap g^{-1}E)
=\mu\biggl(\biggl(\prod_{i=1}^{j_1} p_i\biggr)^{-1} (E) \cap \biggl(\prod_{i=1}^{j_2} p_i\biggr)^{-1}(E)\biggr)\geq \frac{1}{2}a^2. 
\]
This shows that the result holds for $n=1$.

Assume that the result holds for $n\leq m$. 
For $n=m+1$, let $k=k(a,m+1)=k(a,m)+k(c_{m},1)$.
For any sequence $\{p_i\}_{i=1}^k$ in $G$, there exists  
a sequence $\{q_i\}_{i=1}^m$ in $G$ such that 
$FP(\{q_i\}_{i=1}^m)\subset FP(\{p_i\}_{i=1}^{k(a,m)})$ 
and 
\[
\mu\biggl(E\cap \bigcap_{g\in FP(\{q_i\}_{i=1}^m)}g^{-1}E\biggr)\geq c_m.
\]
Let $V= E\cap \bigcap_{g\in FP(\{q_i\}_{i=1}^m)}g^{-1}E$.
For the sequence $\{p_i\}_{i=k(a,m)+1}^k$, 
there exists $q_{m+1}\in FP(\{p_i\}_{i=k(a,m)+1}^k)$ such that 
\[
\mu(V\cap q_{m+1}^{-1}V)\geq  c_{m+1}.
\]
Then 
$FP(\{q_i\}_{i=1}^{m+1})\subset FP(\{p_i\}_{i=1}^{k})$ 
and 
\[
\mu\biggl(E\cap \bigcap_{g\in FP(\{q_i\}_{i=1}^{m+1})}g^{-1}E\biggr)
=\mu(V\cap q_{m+1}^{-1}V)
\geq  c_{m+1}.
\]
This ends the proof of the claim.
\end{proof}
Fix  an FIP-set $F$ and a nonempty open subset $U$ of $X$.
For every $k\in\bbn$ there exists  
a sequence $\{p_i^{(k)}\}_{i=1}^k$ in $G$ such that 
$FP(\{p_i^{(k)}\}_{i=1}^k)\subset F$.
Take a nonempty open subset $V_1$ of $X$ with $\overline{V_1}\subset U$.
Let $a_1=\mu(V_1)$. Then $a_1>0$. 
Let $k_1=k(a_1,1)$ as in the Claim. 
Then there exists $q_1^{(1)}\in G$ such that $q_1^{(1)}\in FP(\{p_i^{(k_1)}\}_{i=1}^{k_1})$
and $\mu(V_1\cap (q_1^{(1)})^{-1}V_1)>0$. 
Assume that $a_m$, $k_m=k(a_m,m)$, $V_m$, $\{q_i^{(m)}\}_{i=1}^m$ has been chosen for $m\leq n$ such that $FP(\{q_i^{(m)}\}_{i=1}^m)\subset FP(\{p_i^{(k_m)}\}_{i=1}^{k_m})$ and 
\[
\mu\biggl(V_m \cap \bigcap_{g\in FP(\{q_i^{(m)}\}_{i=1}^{m})}g^{-1}V_m\biggr)>0.
\]
Pick a nonempty open subset $V_{n+1}$ of  $X$ with 
$\overline{V_{n+1}}\subset V_n \cap \bigcap_{g\in FP(\{q_i^{(n)}\}_{i=1}^{n})}g^{-1}V_n$.
Let $a_{n+1}=\mu(V_{n+1})$. Then $a_{n+1}>0$. 
Let $k_{n+1}=k(a_{n+1},n+1)$ as in the Claim. 
Then there exists a sequence $\{q_i^{(n+1)}\}_{i=1}^{n+1}$ in $G$ such that
$FP(\{q_i^{(n+1)}\}_{i=1}^{n+1})\subset FP(\{p_i^{(k_{n+1})}\}_{i=1}^{k_{n+1}})$ and 
\[
\mu\biggl(V_{n+1} \cap \bigcap_{g\in FP(\{q_i^{(n+1)}\}_{i=1}^{n+1})}g^{-1}V_{n+1}\biggr)>0.
\]
By induction, we get a sequence of nonempty open subsets $\{V_k\}_{k=1}^\infty$ and an FIP set $F'=\bigcup_{k=1}^\infty FP(\{q_i^{(k)}\}_{i=1}^{k})$. It is clear that $F'\subset F$. Pick a point $z\in \bigcap_{k=1}^\infty \overline{V_k}$. Then $z\in U$ and $F'\subset N(z,U)$.
\end{proof}

\begin{thm}\label{thm:min-FIP-eq-point-aa}
 Let $(X,G)$ be a minimal system with $G$ being abelian. 
 Then a point $x\in X$ is pairwise FIP$^*$-equicontinuous if and only if it is $\infty$-step almost automorphic.
\end{thm}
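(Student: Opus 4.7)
The plan is to mirror the two-directional strategy used earlier for pairwise IP$^*$-equicontinuity (Lemmas~\ref{lem:IP-star-eq-point-distal} and \ref{lem:distal-almost-1-1}), substituting FIP/FIP$^*$ for IP/IP$^*$ throughout and using the just-proved Proposition~\ref{prop:supp-FIP=subset} in place of Lemma~\ref{lem:supp-IP=subset}. A preliminary observation is that since $G$ is abelian, the Markov--Kakutani fixed point theorem gives an invariant Borel probability measure $\mu$ on $X$, and minimality of $(X,G)$ forces $\mu$ to have full support, so Proposition~\ref{prop:supp-FIP=subset} applies to $(X,G)$.

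For the direction ``pairwise FIP$^*$-equicontinuous $\Rightarrow$ $\infty$-step almost automorphic'', I would argue contrapositively: if $x$ is not $\infty$-step almost automorphic then by Theorem~\ref{thm:AA-fip-star-rec} it fails to be FIP$^*$-recurrent, so there is some $\delta>0$ and an FIP-set $A\subset N(x, X\setminus B(x,2\delta))$. Using the assumed pairwise FIP$^*$-equicontinuity at $x$, pick a neighborhood $W\subset B(x,\delta)$ of $x$ such that $\{g\in G\colon d(gy,gz)<\delta\}$ is an FIP$^*$-set for every $y,z\in W$. Apply Proposition~\ref{prop:supp-FIP=subset} to the FIP-set $A$ and the open set $W$ to produce an FIP-subset $A'\subseteq A$ and a point $z\in W$ with $A'\subset N(z,W)$. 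Then for every $g\in A'$ one has $d(gz,x)<\delta$ while $d(gx,x)\ge 2\delta$, so $d(gx,gz)>\delta$ on the FIP-set $A'$, contradicting that $\{g\in G\colon d(gx,gz)<\delta\}$ is an FIP$^*$-set.

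For the converse, suppose $x$ is $\infty$-step almost automorphic and consider the factor map $\pi\colon (X,G)\to (X_\infty,G)$ with $X_\infty=X/\RP^{[\infty]}(X,G)$, which is well-defined and closed by Theorem~\ref{thm:PR-infty}(1). The hypothesis $\RP^{[\infty]}(X,G)[x]=\{x\}$ exactly says $\pi^{-1}(\pi(x))=\{x\}$. Given $\varepsilon>0$, choose a neighborhood $V$ of $\pi(x)$ with $\pi^{-1}(V)\subset B(x,\varepsilon/2)$ and a neighborhood $U$ of $x$ with $\pi(U)\subset V$. Since $X_\infty$ is itself a minimal system of order $\infty$, every point of $X_\infty$ is $\infty$-step almost automorphic, hence FIP$^*$-recurrent by Theorem~\ref{thm:AA-fip-star-rec}. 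Therefore for any $u,v\in U$ both $N(\pi(u),V)$ and $N(\pi(v),V)$ are FIP$^*$-sets, and Proposition~\ref{prop:FIP*-set} makes their intersection an FIP$^*$-set on which $gu,gv\in \pi^{-1}(V)\subset B(x,\varepsilon/2)$, giving $d(gu,gv)<\varepsilon$. Thus $x$ is pairwise FIP$^*$-equicontinuous.

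The only genuinely technical ingredient is Proposition~\ref{prop:supp-FIP=subset}, which supplies ``sub-FIP'' sets sitting inside a prescribed return-time set and is what was unavailable for central sets in the previous section; with it in hand, the theorem reduces to the same two templates already used in Sections~3 and~4. The main conceptual obstacle is therefore not in this proof itself but in the fact that, unlike the distal case handled in Theorem~\ref{thm:ip-star-distal}, no extra hypothesis such as ``admits an invariant measure'' is assumed: commutativity of $G$ quietly supplies that measure through Markov--Kakutani, and commutativity is also what makes $\RP^{[\infty]}$ a closed equivalence relation in Theorem~\ref{thm:PR-infty}(1); both uses must be flagged in the proof.
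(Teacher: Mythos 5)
Your proposal is correct and follows essentially the same route as the paper: the forward direction runs the contrapositive argument of Lemma~\ref{lem:IP-star-eq-point-distal} with Theorem~\ref{thm:AA-fip-star-rec} and Proposition~\ref{prop:supp-FIP=subset} in place of their IP$^*$ counterparts (the invariant measure coming from abelianness of $G$), and the converse adapts Lemma~\ref{lem:distal-almost-1-1} to the factor map onto $X/\RP^{[\infty]}(X,G)$ exactly as the paper does. The only difference is that you spell out the details the paper leaves as ``similar to the proof of\dots'', and they check out.
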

\begin{proof}
($\Rightarrow$) 
Let $x\in X$ be a pairwise FIP$^*$-equicontinuous point. As the action group $G$ is abelian, $(X,G)$ admits an invariant measure $\mu$. Moreover, $(X,G)$ is minimal, $\mu$ has full support. Similar to the proof of Lemma~\ref{lem:IP-star-eq-point-distal}, using Theorem~\ref{thm:AA-fip-star-rec} and Proposition~\ref{prop:supp-FIP=subset}, one has that $x$ is $\infty$-step almost automorphic.

($\Leftarrow$) Let  $\pi\colon (X,G)\to (X_\infty,G)$ be the factor map to the maximal factor of order $\infty$.
By Theorem~\ref{thm:PR-infty}(1),
$\RP^{[\infty]}(X,G)$ is a $G$-invariant closed equivalence relation, $X_\infty=X/\RP^{[{\infty}]}(X,G)$ and 
$R_\pi=\RP^{[\infty]}(X,G)$. For any almost automorphic point $x$, by the definition  $R_\pi[x]=\RP^{[\infty]}(X,G)[x]=\{x\}$. Similar to the proof of Lemma~\ref{lem:distal-almost-1-1}, using Theorem~\ref{thm:AA-fip-star-rec}, one has that $x$ is pairwise FIP$^*$-equicontinuous.
\end{proof}

Now Theorem~\ref{thm:main-resut2} is an immediate consequence of Theorem~\ref{thm:min-FIP-eq-point-aa}.

\subsection{\texorpdfstring{Almost FIP$^*$-equicontinuity}{Almost FIP*-equicontinuity}}

Similar to Lemma~\ref{lem:IP-star-equi-points} and Proposition~\ref{prop:minimal-ip*-eq-points}, we have the following characterization of $\eq^{FIP^*}(X,G)$.
\begin{lem}
Let $(X,G)$ be a dynamical system. Then $\eq^{FIP^*}(X,G)$ is a $G$-invariant $G_\delta$ subset of $X$.
\end{lem}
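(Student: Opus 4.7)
The plan is to mirror verbatim the argument used for Lemma~\ref{lem:IP-star-equi-points}, replacing IP$^*$ by FIP$^*$. For each $m\in\bbn$, define
\[
\eq^{FIP^*}_m(X,G):=\Bigl\{x\in X\colon \exists\ \text{neighborhood $U$ of $x$ s.t.\ } \forall y,z\in U,\ \bigl\{g\in G\colon d(gy,gz)<\tfrac{1}{m}\bigr\}\ \text{is FIP}^*\Bigr\}.
\]
First I would observe directly from this definition that each $\eq^{FIP^*}_m(X,G)$ is open (any point in the witnessing neighborhood $U$ of $x$ inherits the same $U$), that the sequence is decreasing in $m$, and that $\eq^{FIP^*}(X,G)=\bigcap_{m=1}^\infty \eq^{FIP^*}_m(X,G)$. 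This immediately shows $\eq^{FIP^*}(X,G)$ is a $G_\delta$ set.

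For $G$-invariance, fix $h\in G$. By uniform continuity of the action by $h$, given $m$ there is $n\in\bbn$ such that $d(u,v)<\tfrac{1}{n}$ implies $d(hu,hv)<\tfrac{1}{m}$. Suppose $x\in \eq^{FIP^*}_n(X,G)$ with witnessing neighborhood $U$. Set $V=hU$, a neighborhood of $hx$. For $y,z\in V$ one has $h^{-1}y,h^{-1}z\in U$, so $\bigl\{g\in G\colon d(gh^{-1}y,gh^{-1}z)<\tfrac{1}{n}\bigr\}$ is FIP$^*$; by the choice of $n$, the conjugated set $\bigl\{g\in G\colon d(hgh^{-1}y,hgh^{-1}z)<\tfrac{1}{m}\bigr\}$ is also FIP$^*$. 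It then remains to pass from this conjugated set to $\bigl\{g\in G\colon d(gy,gz)<\tfrac{1}{m}\bigr\}$ via the inclusion $\bigl\{g\in G\colon d(gy,gz)<\tfrac{1}{m}\bigr\}\supset h\bigl\{g\in G\colon d(hgh^{-1}y,hgh^{-1}z)<\tfrac{1}{m}\bigr\}h^{-1}$, so I conclude $hx\in \eq^{FIP^*}_m(X,G)$; intersecting over $m$ gives $h\,\eq^{FIP^*}(X,G)\subset \eq^{FIP^*}(X,G)$, and applying the same to $h^{-1}$ yields equality.

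The only point that is not wholly formal is the claim that FIP$^*$-ness is preserved under the conjugation $F\mapsto hFh^{-1}$. This is where I expect the main (albeit minor) obstacle: it reduces to showing that conjugation maps FIP-sets to FIP-sets. Concretely, if $A$ is FIP and contains $FP(\{p_i\}_{i=1}^k)$ for each $k$, then from the algebraic identity $h^{-1}(p_{i_1}p_{i_2}\cdots p_{i_j})h=\prod_{\ell=1}^{j}(h^{-1}p_{i_\ell}h)$ one sees that $h^{-1}Ah\supset FP(\{h^{-1}p_i h\}_{i=1}^k)$ for every $k$, so $h^{-1}Ah$ is FIP. Hence if $F$ is FIP$^*$ then $hFh^{-1}$ meets every FIP-set $A$ (because $F$ meets $h^{-1}Ah$), so $hFh^{-1}$ is FIP$^*$, as required. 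With this observation the $G$-invariance step goes through and the lemma is proved.
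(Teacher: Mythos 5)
Your proposal is correct and is essentially the paper's own argument: the paper simply states this lemma is proved "similar to" the IP$^*$ version (Lemma~3.8), whose proof is exactly the decomposition into the open sets $\eq^{FIP^*}_m(X,G)$ and the conjugation trick you reproduce. Your explicit verification that conjugation $F\mapsto hFh^{-1}$ preserves FIP$^*$-sets (via the telescoping identity for finite products) is the one point the paper leaves implicit, and you handle it correctly.
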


\begin{prop}\label{prop:fip-empty-residual}
If $(X,G)$ be a minimal system. Then either $\eq^{FIP^*}(X,G)$ is residual or $\eq^{FIP^*}(X,G)$ is empty.
\end{prop}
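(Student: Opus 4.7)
The plan is to mimic the argument used for Proposition~\ref{prop:minimal-ip*-eq-points}, relying on the preceding lemma which asserts that $\eq^{FIP^*}(X,G)$ is a $G$-invariant $G_\delta$ subset of $X$. There is nothing specific to IP$^*$-sets in that earlier argument; what is used is only the $G$-invariance, the $G_\delta$ property, and the minimality of $(X,G)$. Since $\eq^{FIP^*}(X,G)$ enjoys the first two properties (by the lemma immediately preceding the statement), the same outline will work verbatim.

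Concretely, I would proceed as follows. Suppose $\eq^{FIP^*}(X,G)$ is not empty, and pick any $x \in \eq^{FIP^*}(X,G)$. By $G$-invariance, the whole orbit $Gx$ sits inside $\eq^{FIP^*}(X,G)$. By minimality of $(X,G)$, the orbit $Gx$ is dense in $X$, so $\eq^{FIP^*}(X,G)$ is a dense subset of $X$. Combined with the fact that $\eq^{FIP^*}(X,G)$ is a $G_\delta$, this yields that $\eq^{FIP^*}(X,G)$ is a dense $G_\delta$ subset of $X$, i.e.\ residual. Hence the dichotomy: either $\eq^{FIP^*}(X,G) = \emptyset$ or it is residual.

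There is no real obstacle here; the statement is a formal consequence of the preceding lemma together with minimality. The only thing worth emphasizing is that we do not need any of the structural results about FIP-sets, $\infty$-step almost automorphic points, or the invariant measure machinery from Proposition~\ref{prop:supp-FIP=subset}; the proof is purely topological, exploiting the Baire category setup (dense $G_\delta$ in a compact metric space is residual). This parallel with Proposition~\ref{prop:minimal-ip*-eq-points} also suggests that an analogous dichotomy for minimal systems — almost pairwise FIP$^*$-equicontinuous vs.\ an appropriate FIP-sensitivity — could be formulated in the same spirit, although that is not what the present proposition asks for.
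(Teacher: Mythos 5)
Your proof is correct and is exactly the argument the paper intends: the paper omits the proof, referring to Proposition~\ref{prop:minimal-ip*-eq-points}, whose proof you reproduce verbatim using only the $G$-invariance and $G_\delta$ property of $\eq^{FIP^*}(X,G)$ together with minimality. Nothing further is needed.
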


Similar to the case of pairwise IP$^*$-equicontinuity, we consider the local property and the opposite of pairwise FIP$^*$-equicontinuity. A dynamical system $(X,G)$ is called
\emph{almost pairwise FIP$^*$-equicontinuous}
if $\eq^{FIP^*}(X,G)$ is residual in $X$, 
and \emph{pairwise FIP-sensitive} 
if there exists a constant $\delta>0$ with the property that for each nonempty open subset $U$ of $X$,
there exist $x_1,x_2\in U$ such that
$\bigl\{g\in G \colon \rho(g x_1,\allowbreak g x_2)>\delta\bigr\}$
is an FIP-set. 

Similar to Theorem~\ref{thm:dict-IP-star-eq}, we have the following dichotomy result  for minimal systems.

\begin{thm}
 Let $(X,G)$ be a minimal system.
 Then $(X,G)$ is either almost pairwise FIP$^*$-equicontinuous or pairwise FIP-sensitive.
\end{thm}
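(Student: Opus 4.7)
My plan is to follow the proof of Theorem~\ref{thm:dict-IP-star-eq} almost verbatim, replacing IP by FIP throughout. The structural ingredients for this are already in place: Proposition~\ref{prop:fip-empty-residual} is exactly the FIP-analogue of Proposition~\ref{prop:minimal-ip*-eq-points}, so the dichotomy naturally splits on whether $\eq^{FIP^*}(X,G)$ is empty. If $\eq^{FIP^*}(X,G)\neq\emptyset$, then Proposition~\ref{prop:fip-empty-residual} immediately forces $\eq^{FIP^*}(X,G)$ to be residual, and $(X,G)$ is almost pairwise FIP$^*$-equicontinuous, so that case is free.

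Suppose instead that $\eq^{FIP^*}(X,G)=\emptyset$. Imitating the IP proof, for each $m\in\bbn$ I would define $A_m(X,G)$ to be the set of $x\in X$ such that every neighbourhood $U$ of $x$ contains points $x_1,x_2\in U$ with $\{g\in G\colon d(gx_1,gx_2)>1/m\}$ an FIP-set. Each $A_m$ is closed, and by the failure hypothesis $X=\bigcup_{m\in\bbn}A_m$. Baire's theorem then supplies some $m_0$ with $A_{m_0}$ having non-empty interior $V$. Minimality gives a finite $H\subset G$ with $\bigcup_{h\in H}hV=X$, and uniform continuity of the finite family $\{h\colon h\in H\}$ produces a single $\delta>0$ such that $d(u,v)>1/m_0$ implies $d(hu,hv)>\delta$ for every $h\in H$. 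For any non-empty open $U\subset X$, pick $h\in H$ with $V\cap h^{-1}U$ non-empty, use the definition of $A_{m_0}$ inside this open set to obtain witnesses $x_1',x_2'$, and translate via $x_i=hx_i'\in U$.

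The only step worth checking — and the closest thing to an obstacle — is that the conjugation identity
\[
\{g\in G\colon d(gx_1,gx_2)>\delta\}\supset h\bigl\{g\in G\colon d(hgh^{-1}x_1,hgh^{-1}x_2)>\delta\bigr\}h^{-1}
\]
transfers the FIP property from the right to the left. This is immediate from $\prod_{i\in\alpha}(hp_ih^{-1})=h\bigl(\prod_{i\in\alpha}p_i\bigr)h^{-1}$ for every finite $\alpha\subset\{1,\dots,k\}$, which gives $h\,FP(\{p_i\}_{i=1}^k)\,h^{-1}=FP(\{hp_ih^{-1}\}_{i=1}^k)$ and thereby preserves the FIP condition at every finite length $k$. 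Thus the bookkeeping that made the IP version of Theorem~\ref{thm:dict-IP-star-eq} go through applies verbatim, with the mild advantage that one does not even need an infinite sequence to be compatible — only the length-$k$ products for every $k$, which is precisely the defining feature of FIP-sets.
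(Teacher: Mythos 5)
Your proposal is correct and is exactly the argument the paper intends: the paper omits the proof of this theorem, stating only that it is ``similar to Theorem~\ref{thm:dict-IP-star-eq},'' and your adaptation supplies precisely the needed details --- the dichotomy via Proposition~\ref{prop:fip-empty-residual}, the closed sets $A_m$ with Baire's theorem, and the verification that conjugation preserves FIP-sets since $\prod_{i\in\alpha}(hp_ih^{-1})=h\bigl(\prod_{i\in\alpha}p_i\bigr)h^{-1}$ for each finite $\alpha$. Your observation that only finite-length products need to be transported is a correct (if minor) simplification over the infinite-sequence bookkeeping in the IP case.
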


Recall that a minimal system $(X,G)$ is called \emph{$\infty$-step almost automorphic} if it has some $\infty$-step almost automorphic point. 
By Theorem~\ref{thm:min-FIP-eq-point-aa} and Proposition~\ref{prop:fip-empty-residual}, we have the following corollary.

\begin{cor}\label{cor:stru-almost-FIP*-eq}
 Let $(X,G)$ be a minimal system with $G$ being abelian. 
 Then $(X,G)$ is almost pairwise FIP$^*$-equicontinuous if and only if it is   $\infty$-step almost automorphic.
\end{cor}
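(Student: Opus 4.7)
The plan is to deduce the corollary directly by combining the point-wise characterization in Theorem~\ref{thm:min-FIP-eq-point-aa} with the zero--one-type dichotomy in Proposition~\ref{prop:fip-empty-residual}. Specifically, Theorem~\ref{thm:min-FIP-eq-point-aa} identifies $\eq^{FIP^*}(X,G)$ with the set of $\infty$-step almost automorphic points whenever $(X,G)$ is minimal and $G$ is abelian, and Proposition~\ref{prop:fip-empty-residual} tells us that for a minimal system this set is either empty or residual. Together these turn the existence of a single pairwise FIP$^*$-equicontinuous point into an assertion about a residual set of them.

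For the forward direction, suppose $(X,G)$ is almost pairwise FIP$^*$-equicontinuous. Then by definition $\eq^{FIP^*}(X,G)$ is residual, in particular nonempty, so I can pick $x\in \eq^{FIP^*}(X,G)$. By Theorem~\ref{thm:min-FIP-eq-point-aa} this $x$ is $\infty$-step almost automorphic, so $(X,G)$ is an $\infty$-step almost automorphic system by definition.

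For the reverse direction, suppose $(X,G)$ is $\infty$-step almost automorphic, meaning there is at least one $\infty$-step almost automorphic point $x\in X$. Applying Theorem~\ref{thm:min-FIP-eq-point-aa} again shows $x\in \eq^{FIP^*}(X,G)$, so $\eq^{FIP^*}(X,G)$ is nonempty. Then Proposition~\ref{prop:fip-empty-residual} upgrades this nonemptiness to residuality, giving that $(X,G)$ is almost pairwise FIP$^*$-equicontinuous.

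There is no real obstacle here; the work has already been done in the two results being invoked. The only thing to double-check is that the definition of ``$\infty$-step almost automorphic system'' used in the statement agrees with having at least one $\infty$-step almost automorphic point (which is exactly the definition recalled right before the corollary), and that Theorem~\ref{thm:min-FIP-eq-point-aa} applies to the points supplied in each direction --- both of which hold under the standing hypotheses that $(X,G)$ is minimal and $G$ is abelian. So the proof will be essentially two short paragraphs, each a one-step invocation of the cited results.
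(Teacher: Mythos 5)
Your proof is correct and follows exactly the route the paper takes: the corollary is obtained by combining the pointwise equivalence of Theorem~\ref{thm:min-FIP-eq-point-aa} with the empty-or-residual dichotomy of Proposition~\ref{prop:fip-empty-residual}. Nothing further is needed.
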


\subsection*{Acknowledgments.}
The authors would like to thank Prof. Weisheng Wu  and Dr. Jiahao Qiu for helpful suggestions.
We also express many thanks to the anonymous referee, whose comments have substantially improved this paper.

\begin{bibdiv}
\begin{biblist}
\bib{Akin1997}{book}{
   author={Akin, Ethan},
   title={Recurrence in topological dynamics},
   series={The University Series in Mathematics},
   note={Furstenberg families and Ellis actions},
   publisher={Plenum Press, New York},
   date={1997},
   pages={x+265},
   isbn={0-306-45550-1},
   review={\MR{1467479}},
   doi={10.1007/978-1-4757-2668-8},
}

\bib{A88}{book}{
   author={Auslander, Joseph},
   title={Minimal flows and their extensions},
   series={North-Holland Mathematics Studies},
   volume={153},
   note={Notas de Matem\'{a}tica [Mathematical Notes], 122},
   publisher={North-Holland Publishing Co., Amsterdam},
   date={1988},
   pages={xii+265},
   isbn={0-444-70453-1},
   review={\MR{956049}},
}

\bib{AHM61}{article}{
   author={Auslander, L.},
   author={Hahn, F.},
   author={Markus, L.},
   title={Topological dynamics on nilmanifolds},
   journal={Bull. Amer. Math. Soc.},
   volume={67},
   date={1961},
   pages={298--299},
   issn={0002-9904},
   review={\MR{123308}},
   doi={10.1090/S0002-9904-1961-10592-2},
}

\bib{BL18}{article}{
   author={Bergelson, V.},
   author={Leibman, A.},
   title={${\rm IP}_r^\ast$-recurrence and nilsystems},
   journal={Adv. Math.},
   volume={339},
   date={2018},
   pages={642--656},
   issn={0001-8708},
   review={\MR{3866908}},
   doi={10.1016/j.aim.2018.09.032},
}

\bib{C63}{article}{
   author={Clay, Jesse Paul},
   title={Variations on equicontinuity},
   journal={Duke Math. J.},
   volume={30},
   date={1963},
   pages={423--431},
   issn={0012-7094},
   review={\MR{154270}},
}

\bib{DSMSY13}{article}{
   author={Dong, Pandeng},
   author={Donoso, Sebasti\'{a}n},
   author={Maass, Alejandro},
   author={Shao, Song},
   author={Ye, Xiangdong},
   title={Infinite-step nilsystems, independence and complexity},
   journal={Ergodic Theory Dynam. Systems},
   volume={33},
   date={2013},
   number={1},
   pages={118--143},
   issn={0143-3857},
   review={\MR{3009105}},
   doi={10.1017/S0143385711000861},
}

\bib{EEN}{article}{
   author={Ellis, David B.},
   author={Ellis, Robert},
   author={Nerurkar, Mahesh},
   title={The topological dynamics of semigroup actions},
   journal={Trans. Amer. Math. Soc.},
   volume={353},
   date={2001},
   number={4},
   pages={1279--1320},
   issn={0002-9947},
   review={\MR{1806740}},
   doi={10.1090/S0002-9947-00-02704-5},
}

\bib{EGS1975}{article}{
   author={Ellis, Robert},
   author={Glasner, Shmuel},
   author={Shapiro, Leonard},
   title={Proximal-isometric ($\scr P\scr I $) flows},
   journal={Advances in Math.},
   volume={17},
   date={1975},
   number={3},
   pages={213--260},
   issn={0001-8708},
   review={\MR{380755}},
   doi={10.1016/0001-8708(75)90093-6},
}

\bib{F61}{article}{
   author={Furstenberg, H.},
   title={Strict ergodicity and transformation of the torus},
   journal={Amer. J. Math.},
   volume={83},
   date={1961},
   pages={573--601},
   issn={0002-9327},
   review={\MR{133429}},
   doi={10.2307/2372899},
}

\bib{F63}{article}{
   author={Furstenberg, H.},
   title={The structure of distal flows},
   journal={Amer. J. Math.},
   volume={85},
   date={1963},
   pages={477--515},
   issn={0002-9327},
   review={\MR{157368}},
   doi={10.2307/2373137},
}

\bib{F81}{book}{
   author={Furstenberg, H.},
   title={Recurrence in ergodic theory and combinatorial number theory},
   note={M. B. Porter Lectures},
   publisher={Princeton University Press, Princeton, N.J.},
   date={1981},
   pages={xi+203},
   isbn={0-691-08269-3},
   review={\MR{603625}},
}

\bib{Gillis}{article}{
   author={Gillis, J.},
   title={Note on a Property of Measurable Sets},
   journal={J. London Math. Soc.},
   volume={11},
   date={1936},
   number={2},
   pages={139--141},
   issn={0024-6107},
   review={\MR{1574762}},
   doi={10.1112/jlms/s1-11.2.139},
}

\bib{GGY18}{article}{
   author={Glasner, Eli},
   author={Gutman, Yonatan},
   author={Ye, XiangDong},
   title={Higher order regionally proximal equivalence relations for general
   minimal group actions},
   journal={Adv. Math.},
   volume={333},
   date={2018},
   pages={1004--1041},
   issn={0001-8708},
   review={\MR{3818095}},
   doi={10.1016/j.aim.2018.05.023},
}

\bib{G58}{article}{
   author={Gottschalk, W. H.},
   title={Minimal sets: an introduction to topological dynamics},
   journal={Bull. Amer. Math. Soc.},
   volume={64},
   date={1958},
   pages={336--351},
   issn={0002-9904},
   review={\MR{100048}},
   doi={10.1090/S0002-9904-1958-10223-2},
}

\bib{GR1971}{article}{
   author={Graham, R. L.},
   author={Rothschild, B. L.},
   title={Ramsey's theorem for $n$-parameter sets},
   journal={Trans. Amer. Math. Soc.},
   volume={159},
   date={1971},
   pages={257--292},
   issn={0002-9947},
   review={\MR{284352}},
   doi={10.2307/1996010},
}

\bib{HK05}{article}{
   author={Host, Bernard},
   author={Kra, Bryna},
   title={Nonconventional ergodic averages and nilmanifolds},
   journal={Ann. of Math. (2)},
   volume={161},
   date={2005},
   number={1},
   pages={397--488},
   issn={0003-486X},
   review={\MR{2150389}},
   doi={10.4007/annals.2005.161.397},
}

\bib{HS2012}{book}{
   author={Hindman, Neil},
   author={Strauss, Dona},
   title={Algebra in the Stone-\v{C}ech compactification},
   series={De Gruyter Textbook},
   note={Theory and applications;
   Second revised and extended edition [of MR1642231]},
   publisher={Walter de Gruyter \& Co., Berlin},
   date={2012},
   pages={xviii+591},
   isbn={978-3-11-025623-9},
   review={\MR{2893605}},
}

\bib{HKM10}{article}{
   author={Host, Bernard},
   author={Kra, Bryna},
   author={Maass, Alejandro},
   title={Nilsequences and a structure theorem for topological dynamical
   systems},
   journal={Adv. Math.},
   volume={224},
   date={2010},
   number={1},
   pages={103--129},
   issn={0001-8708},
   review={\MR{2600993}},
   doi={10.1016/j.aim.2009.11.009},
}

\bib{HLY11}{article}{
   author={Huang, Wen},
   author={Lu, Ping},
   author={Ye, Xiangdong},
   title={Measure-theoretical sensitivity and equicontinuity},
   journal={Israel J. Math.},
   volume={183},
   date={2011},
   pages={233--283},
   issn={0021-2172},
   review={\MR{2811160}},
   doi={10.1007/s11856-011-0049-x},
}

\bib{HSY2016}{article}{
   author={Huang, Wen},
   author={Shao, Song},
   author={Ye, Xiangdong},
   title={Nil Bohr-sets and almost automorphy of higher order},
   journal={Mem. Amer. Math. Soc.},
   volume={241},
   date={2016},
   number={1143},
   pages={v+83},
   issn={0065-9266},
   isbn={978-1-4704-1872-4},
   isbn={978-1-4704-2879-2},
   review={\MR{3476203}},
   doi={10.1090/memo/1143},
}

\bib{L12}{article}{
   author={Li, Jian},
   title={Dynamical characterization of C-sets and its application},
   journal={Fund. Math.},
   volume={216},
   date={2012},
   number={3},
   pages={259--286},
   issn={0016-2736},
   review={\MR{2890544}},
   doi={10.4064/fm216-3-4},
}

\bib{Li-Yang1}{article}{
   author={Li, Jian},
   author={Yang, YiNi},
   title={Stronger versions of sensitivity for minimal group actions},
   journal={Acta Math. Sin. (Engl. Ser.)},
   volume={37},
   date={2021},
   number={12},
   pages={1933--1946},
   issn={1439-8516},
   review={\MR{4355165}},
   doi={10.1007/s10114-021-0511-6},
}

\bib{LY21}{article}{
   author={Li, Jian},
   author={Yang, Yini},
   title={On $n$-tuplewise IP-sensitivity and thick sensitivity},
   journal={Discrete Contin. Dyn. Syst.},
   volume={42},
   date={2022},
   number={6},
   pages={2775--2793},
   issn={1078-0947},
   review={\MR{4421512}},
   doi={10.3934/dcds.2021211},
}

\bib{SY2012}{article}{
   author={Shao, Song},
   author={Ye, Xiangdong},
   title={Regionally proximal relation of order $d$ is an equivalence one
   for minimal systems and a combinatorial consequence},
   journal={Adv. Math.},
   volume={231},
   date={2012},
   number={3-4},
   pages={1786--1817},
   issn={0001-8708},
   review={\MR{2964624}},
   doi={10.1016/j.aim.2012.07.012},
}

\bib{Vries1993}{book}{
   author={de Vries, J.},
   title={Elements of topological dynamics},
   series={Mathematics and its Applications},
   volume={257},
   publisher={Kluwer Academic Publishers Group, Dordrecht},
   date={1993},
   pages={xvi+748},
   isbn={0-7923-2287-8},
   review={\MR{1249063}},
   doi={10.1007/978-94-015-8171-4},
}

\bib{YY18}{article}{
   author={Ye, Xiangdong},
   author={Yu, Tao},
   title={Sensitivity, proximal extension and higher order almost
   automorphy},
   journal={Trans. Amer. Math. Soc.},
   volume={370},
   date={2018},
   number={5},
   pages={3639--3662},
   issn={0002-9947},
   review={\MR{3766861}},
   doi={10.1090/tran/7100},
}

\end{biblist}
\end{bibdiv}

\end{document}